\date{\today}
\DeclareRobustCommand{\SkipTocEntry}[5]{}
\let\stdthebibliography\thebibliography
\let\stdendthebibliography\endthebibliography
\def\Z{\mathbb{Z}}
\def\R{\mathbb{R}}
\def\H{\mathbb{H}}
\def\N{\mathbb{N}}
\def\C{\mathbb{C}}
\def\sL{\mathcal L}
\def\sLT{\tilde{\mathcal L}}
\def\sB{\mathcal B}
\def\sQ{\mathcal Q}
\def\sK{\mathcal{K}}
\DeclareMathOperator{\sh}{sh}
\DeclareMathOperator{\ch}{ch}
\DeclareMathOperator{\im}{Im}
\def\SL{{\rm SL}}
\def\hbar{\overline{h}}
\newcommand{\pfrac}[2]{\left(\frac{#1}{#2}\right)}
\newcommand{\pmfrac}[2]{\left(\mfrac{#1}{#2}\right)}
\newcommand{\ptfrac}[2]{\left(\tfrac{#1}{#2}\right)}
\newcommand{\pMatrix}[4]{\left(\begin{matrix}#1 & #2 \\ #3 & #4\end{matrix}\right)}
\newcommand{\ppMatrix}[4]{\left(\!\pMatrix{#1}{#2}{#3}{#4}\!\right)}
\renewcommand{\pmatrix}[4]{\left(\begin{smallmatrix}#1 & #2 \\ #3 & #4\end{smallmatrix}\right)}
\renewcommand{\bar}[1]{\overline{#1}}
\renewcommand{\(}{\left(}
\renewcommand{\)}{\right)}
\renewcommand{\check}{\widecheck}
\renewcommand{\hat}{\widehat}
\renewcommand{\tilde}{\widetilde}
\newcommand{\G}{\mathcal G}
\renewcommand{\sl}{\big|}
\DeclareMathOperator{\sgn}{sgn}
\def\ep{\epsilon}
\newtheorem{theorem}{Theorem}[section]
\newtheorem{lemma}[theorem]{Lemma}
\newtheorem{corollary}[theorem]{Corollary}
\newtheorem{proposition}[theorem]{Proposition}
\theoremstyle{remark}
\numberwithin{equation}{section}
\begin{document}

\author{Scott Ahlgren}
\address{Department of Mathematics\\
University of Illinois\\
Urbana, IL 61801} 
\email{sahlgren@illinois.edu} 

\author{Alexander Dunn}
\address{Department of Mathematics\\
University of Illinois\\
Urbana, IL 61801} 
\email{ajdunn2@illinois.edu} 

\subjclass[2010]{11F30, 11F37, 11L05, 11P82}
\thanks{The first author was  supported by a grant from the Simons Foundation (\#426145 to Scott Ahlgren).}

\dedicatory{\it Dedicated to George Andrews on the occasion of his 80th birthday.}

\title{Maass forms and the  mock theta function $f(q)$}

\begin{abstract}
Let $f(q):=1+\sum_{n=1}^{\infty} \alpha(n)q^n$ be the well-known third order mock theta of Ramanujan.  In 1964, George Andrews proved 
an  asymptotic formula of the form 
\[\alpha(n)= \sum_{c\leq\sqrt{n}} \psi(n)+O_\ep\(n^\ep\),\]
where $\psi(n)$ is an expression involving generalized Kloosterman sums and the $I$-Bessel function.
Andrews conjectured that the series converges to $\alpha(n)$ when extended to infinity, and that it does not converge absolutely.   Bringmann and Ono proved the first of these conjectures.
  Here we  obtain a power savings bound for the error in Andrews' formula, and we also prove the second of these conjectures.
  
  Our methods depend on the spectral theory of Maass forms
of half-integral weight, and in particular on  an average estimate for the Fourier coefficients of such forms which gives a 
 power savings in the spectral parameter.
 
 As a further application of this result, we derive a formula which expresses 
 $\alpha(n)$ with small error as a  sum of exponential terms over imaginary quadratic  points (this is similar in spirit to a recent result of Masri).
 We also obtain a bound 
 for the size of the error term incurred by truncating  Rademacher's analytic formula for the ordinary partition function 
which improves a  result of the first author and Andersen when $24n-23$ is squarefree.  
\end{abstract}
\begin{center}
\dedicatory{\it Dedicated to George Andrews on the occasion of his 80th birthday.}
\end{center}

\maketitle

\section{Introduction}
Let 
\begin{equation}\label{eq:fqdefine}
f(q):=1+\sum_{n=1}^\infty \alpha(n)q^n=1+\sum_{n=1}^\infty \frac{q^{n^2}}{(1+q)^2(1+q^2)^2\cdots(1+q^n)^2}
\end{equation}
be the  famous third-order mock theta function of Ramanujan.  
One  may consult for example \cite{bringmann-ono}, \cite{bfor-book}, \cite{duke-survey},
and \cite{zagier-survey} and the references therein for an account of the substantial body of research related to this 
and to other mock theta functions.
Part of the importance of the function $f(q)$ arises from the fact that the coefficients $\alpha(n)$ 
are related to a fundamental combinatorial statistic.  In particular, we have
\begin{equation*}
\alpha(n)=N_{\rm e}(n)-N_{\rm o}(n),
\end{equation*}
where these denote the number of partitions of even and odd rank respectively.

Ramanujan recorded an asymptotic formula for $\alpha(n)$ in his last letter to Hardy in  1920; this was proved in 1951 by Dragonette \cite{dragonette}.
 Andrews \cite{andrews} made a major breakthrough in his 1964 Ph.D. thesis by proving  the  remarkable formula (valid for any $\ep>0$)
\begin{equation}\label{eq:andrews}
\alpha(n)=\frac\pi{(24n-1)^\frac14}\sum_{c=1}^{\lfloor\sqrt{n}\rfloor}\frac{(-1)^{\lfloor \frac{c+1}{2} \rfloor} A_{2c} \(n-\frac{c(1+(-1)^c)}{4} \)  }{ c}
I_{\frac12}\pfrac{\pi \sqrt{24n-1}}{12c}+O_\ep(n^\ep).
\end{equation}
Here $I_{\frac{1}{2}}$ is the $I$--Bessel function of order $1/2$, and 
  $A_c(n)$ is  the generalized Kloosterman sum
\begin{equation}\label{eq:acdef}
A_c(n):=\sum_{\substack{d \pmod c\\ (d,c)=1}}e^{\pi i s(d,c)}e\(-\frac{dn}c\),\qquad c, n\in \N,
\end{equation}
where $s(d,c)$ is the Dedekind sum defined in \eqref{eq:ded-sum-def} below and $e(x):=\exp(2\pi ix)$.

Andrews   \cite[p. 456]{andrews}, \cite[\S 5]{andrews-survey} conjectured that 
\begin{equation}\label{eq:andrews-infinity}
\alpha(n)=\frac\pi{(24n-1)^\frac14}\sum_{c=1}^{\infty}\frac{(-1)^{\lfloor \frac{c+1}{2} \rfloor} A_{2c} \(n-\frac{c(1+(-1)^c)}{4} \)  }{c}
I_{\frac12}\pfrac{\pi \sqrt{24n-1}}{12c},
\end{equation}
and    that the series does not converge absolutely.
We note that by  a result of Lehmer \cite[Theorem~8]{lehmer-series} we have the Weil-type bound
\begin{equation}\label{eq:lehmer_bound}
\left | A_{2c} \(n-\mfrac{c(1+(-1)^c)}{4} \) \right| \leq 2^{\omega_o(c)} \sqrt{2c},
\end{equation}
where $\omega_o(c)$ is the number of distinct odd prime divisors of $c$
(we give a refinement of this result in Section~\ref{sec:heegnerconvert} below).
This bound does not suffice to  prove convergence.

The formula \eqref{eq:andrews-infinity} was   proved by  Bringmann and Ono \cite{bringmann-ono} in 2006 using the theory of harmonic Maass forms, and in particular the 
 work of  Zwegers  \cite{zwegers} which  packaged Watson's transformation properties  \cite{watson}
for $f(q)$ in a three dimensional vector of real-analytic modular forms.

Here we return to the question of obtaining an effective error estimate for the approximation to $\alpha(n)$ by the truncation 
of the series \eqref{eq:andrews}. To this end, we define the error term $R(n, N)$ by
\begin{equation}\label{eq:Rdef}
\alpha(n)=\frac\pi{(24n-1)^\frac14}\sum_{c\leq N}\frac{(-1)^{\lfloor \frac{c+1}{2} \rfloor} A_{2c} \(n-\frac{c(1+(-1)^c)}{4} \)  }{ c}
I_{\frac12}\pfrac{\pi \sqrt{24n-1}}{12c}+R(n,N).
\end{equation}
Then the result of Andrews gives
\begin{equation}\label{eq:andest}
R(n,\sqrt{n})\ll_\ep n^\ep.
\end{equation}
 Our first main result gives a power-saving improvement.
\begin{theorem}\label{thm:powersave}
Suppose that  $24n-1$ is positive and squarefree. Then for all $\ep>0$ and  $\gamma>0$ we have 
\begin{equation*}
R(n,\gamma \sqrt n)\ll_{\gamma, \ep} n^{-\frac1{147}+\ep}.
\end{equation*}
\end{theorem}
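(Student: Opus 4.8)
The plan is to reduce the theorem to a tail estimate and then to extract cancellation among the generalized Kloosterman sums via the spectral theory of half-integral weight Maass forms. Since Bringmann and Ono proved that the full series \eqref{eq:andrews-infinity} converges to $\alpha(n)$, the definition \eqref{eq:Rdef} gives exactly
\[
R(n,\gamma\sqrt n)=\frac{\pi}{(24n-1)^{\frac14}}\sum_{c>\gamma\sqrt n}\frac{(-1)^{\lfloor\frac{c+1}{2}\rfloor}\,A_{2c}\!\left(n-\frac{c(1+(-1)^c)}{4}\right)}{c}\,I_{\frac12}\!\pfrac{\pi\sqrt{24n-1}}{12c}.
\]
First I would record that, up to normalization and the splitting into residue classes of $c$ forced by the parity-dependent shift $n-\tfrac{c(1+(-1)^c)}{4}$, the sums $A_{2c}$ are precisely the Kloosterman sums attached to the weight $\tfrac12$ multiplier system of $\eta$ on $\SL_2(\Z)$, with a bounded ``small'' index and a ``large'' index $\asymp n$. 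For $c\ge\gamma\sqrt n$ the argument $\tfrac{\pi\sqrt{24n-1}}{12c}$ of the Bessel factor is $O_\gamma(1)$ and $I_{\frac12}$ is a smooth, slowly varying weight; this is the range I must control.

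Next I would insert a smooth dyadic partition of unity in the $c$-aspect, reducing matters to estimating, for each $X\ge\gamma\sqrt n$, a smoothly weighted sum $\sum_c c^{-1}A_{2c}(\,\cdot\,)\,\phi(c/X)$ in which the Bessel factor has been separated off by a Mellin transform (or by partial summation against its slow variation). The crucial move is then to feed each such sum into the Kuznetsov trace formula for the $\eta$-multiplier system, converting the geometric sum of Kloosterman sums into a spectral sum
\[
\sum_j \rho_j(\tilde m)\,\overline{\rho_j(\tilde n)}\,h(t_j)\;+\;(\text{Eisenstein contribution}),
\]
where the $\rho_j$ are the Fourier coefficients of an orthonormal basis of weight $\tfrac12$ Maass cusp forms, the $t_j$ are the spectral parameters, and $h$ is the integral transform of $\phi$, whose support and decay in $t$ are dictated by $X$ and $n$.

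The heart of the argument is the estimation of this spectral side. After Cauchy--Schwarz in $j$, I would invoke the average estimate announced in the abstract --- the bound for $\sum_{t_j}|\rho_j(\tilde n)|^2$ that saves a fixed power in the spectral parameter over the Weyl-law count --- together with the analogous control of the continuous spectrum. It is exactly here that the hypothesis that $24n-1$ be squarefree enters: it guarantees that $\tilde n$ corresponds to a fundamental discriminant, so that the coefficient $\rho_j(\tilde n)$ at the large index is governed cleanly by the average bound, uncontaminated by square divisors or oldform-type lifts. Summing the resulting estimates over the dyadic scales $X$ and optimizing the truncation parameters against the power saving $\delta$ furnished by the coefficient bound should then yield the stated exponent $-\tfrac1{147}+\ep$.

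The main obstacle is precisely this last input: obtaining, and then correctly deploying, a power-saving \emph{average} bound for the coefficients $\rho_j(\tilde n)$ that is uniform in the large index $n\asymp\tilde n$. The naive Rankin--Selberg bound is insufficient, because the tail does not converge absolutely and one genuinely needs a power of cancellation at \emph{every} scale $X$, not merely for $c$ near $\sqrt n$. Subsidiary difficulties --- none the crux, but each needing care --- are the contribution of the finitely many exceptional eigenvalues below $\tfrac14$, the Eisenstein term, the uniformity of the Kuznetsov transform $h(t_j)$ across the transition range $t\asymp\sqrt n/X$, and the bookkeeping of the residue classes of $c$ introduced by the parity-dependent shift in $A_{2c}$.
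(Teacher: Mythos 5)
Your proposal is correct in outline and follows essentially the same route as the paper's proof: the tail is rewritten (Lemma~\ref{lem:fq-kloos}) as sums of Kloosterman sums $S(0,n,2c,\psi)$ for an eta-type weight-$\frac12$ multiplier $\psi$ on $\Gamma_0(2)$, smoothed and cut into dyadic ranges, converted by the Kuznetsov formula (Proposition~\ref{Kuz}) into a spectral sum, and estimated by Cauchy--Schwarz against exactly the power-saving average coefficient bounds you cite (Propositions~\ref{AndDu}, \ref{avgduke} and Theorem~\ref{trade}), with the squarefree hypothesis entering for the reason you identify. The differences are simplifications in your favor that execution would reveal: no cusp of $\Gamma_0(2)$ is singular for $\psi$ and the indices $0_\psi=-\frac1{24}<0<n_\psi$ put you in the mixed-sign case, so the Eisenstein and holomorphic contributions you list as difficulties are absent, while your single ``average estimate'' is in fact three distinct bounds deployed in different ranges of the spectral parameter.
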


 As a corollary, we see that when $n$ is sufficiently large, $\alpha(n)$ is the 
 closest integer to the truncated sum appearing in \eqref{eq:andrews}.  It would be interesting
 to quantify what ``sufficiently large" means here.

It is also  interesting to note that if one assumes the Ramanujan--Lindel\"{o}f conjecture for the coefficients of Maass cusp forms of weight $1/2$ the present methods would yield 
\begin{equation*}
R(n,\gamma \sqrt{n}) \ll_{\gamma,\ep} n^{-\frac{1}{16}+\ep},   
\end{equation*}
while the analogue of the Linnik--Selberg conjecture for the sums of generalized Kloosterman sums which arise in the proof would give 
\begin{equation*}
R(n,\gamma \sqrt{n}) \ll_{\gamma,\ep} n^{-\frac{1}{4}+\ep}.
\end{equation*}

We prove the second conjecture of Andrews mentioned above 
using a character sum identity proved in Section~\ref{sec:heegnerconvert} together with an equidistribution result of Duke, Friedlander and Iwaniec \cite{duke-fried-iwan-cong}   for  solutions of quadratic congruences to prime moduli.
\begin{theorem}\label{thm:abs_conv_not}
The series \eqref{eq:andrews-infinity} does not converge absolutely for any value of $n$.
\end{theorem}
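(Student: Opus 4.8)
The plan is to show that for every $n$ the series of absolute values
\[
\sum_{c=1}^\infty \frac{\left|A_{2c}\(n-\tfrac{c(1+(-1)^c)}{4}\)\right|}{c}\, I_{\frac12}\pfrac{\pi\sqrt{24n-1}}{12c}
\]
diverges. First I would insert the closed form $I_{\frac12}(z)=\sqrt{2/(\pi z)}\,\sinh z$ and use $\sinh z\sim z$ as $z\to 0$ to record that $I_{\frac12}\!\left(\tfrac{\pi\sqrt{24n-1}}{12c}\right)\asymp_n c^{-1/2}$ as $c\to\infty$, since the argument tends to $0$. Thus the $c$-th term is $\asymp_n |A_{2c}(\cdots)|\,c^{-3/2}$, and it suffices to show $\sum_c |A_{2c}(\cdots)|\,c^{-3/2}=\infty$. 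As every term is nonnegative I may discard all $c$ except the odd primes $c=p$, for which the shift $\tfrac{c(1+(-1)^c)}{4}$ vanishes, so the task reduces to proving
\[
\sum_{p\ \mathrm{odd\ prime}}\frac{|A_{2p}(n)|}{p^{3/2}}=\infty .
\]

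Next I would apply the character sum identity of Section~\ref{sec:heegnerconvert} to evaluate $A_{2p}(n)$ for odd primes $p$. The Dedekind-sum multiplier in \eqref{eq:acdef} is the weight $1/2$ theta multiplier, and for prime modulus the associated sum is of Sali\'e type: the identity should express $A_{2p}(n)$ as $\sqrt{2p}$ times a unimodular factor and a quadratic-character factor $\leg{D}{p}$ times $2\cos\!\left(2\pi\nu_p/p\right)$, where $\nu_p$ solves a quadratic congruence $\nu^2\equiv D\pmod p$ for a fixed nonzero integer $D$ determined by $24n-1$. Since I am taking absolute values, the unimodular and character factors drop out, so that $|A_{2p}(n)|=2\sqrt{2p}\,\left|\cos(2\pi\nu_p/p)\right|$ whenever $D$ is a square mod $p$, and $A_{2p}(n)=0$ otherwise. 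I therefore restrict to the primes $p$ with $\leg{D}{p}=1$, a set of positive relative density.

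Finally I would invoke the equidistribution theorem of Duke, Friedlander and Iwaniec \cite{duke-fried-iwan-cong}: as $p$ ranges over primes with $\leg{D}{p}=1$, the normalized roots $\nu_p/p$ become equidistributed modulo $1$. Hence, for any fixed $\delta>0$, the set $S$ of such primes with $\left|\cos(2\pi\nu_p/p)\right|\ge\delta$ has positive relative density among the primes, and on $S$ we have $|A_{2p}(n)|\gg\sqrt p$. Partial summation against the counting function of $S$, whose growth is controlled by the equidistribution statement, together with Mertens' theorem then yields
\[
\sum_{p\in S}\frac{|A_{2p}(n)|}{p^{3/2}}\gg\sum_{p\in S}\frac1p=\infty,
\]
which gives the divergence of the absolute series and proves Theorem~\ref{thm:abs_conv_not}.

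The main obstacle lies in the middle two steps. One must first produce a clean Sali\'e-type evaluation of the theta-multiplier Kloosterman sum $A_{2p}(n)$, keeping careful track of the Gauss-sum phase, the quadratic character $\leg{D}{p}$, and the contribution of the factor $2$ in the modulus $2p$ as opposed to $p$; this is precisely the content supplied by the identity of Section~\ref{sec:heegnerconvert}. The deeper difficulty is then to guarantee that the cosine factor is not systematically small. This is exactly the point at which an equidistribution input is indispensable: without it one could not exclude a conspiracy forcing $\cos(2\pi\nu_p/p)\approx 0$ for almost all $p$, and it is the theorem of Duke--Friedlander--Iwaniec that rules this out and delivers the positive-density set $S$ on which $|A_{2p}(n)|\gg\sqrt p$.
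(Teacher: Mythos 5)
Your proposal is correct and follows essentially the same route as the paper: both reduce to the prime moduli $c=p$ (where the shift $\tfrac{c(1+(-1)^c)}{4}$ vanishes), use the character-sum identity of Section~\ref{sec:heegnerconvert} to evaluate $A_{2p}(n)$ as a constant times $\sqrt{p}\,\cos(4\pi m_p/p)$ with $m_p$ a root of a quadratic congruence mod $p$, invoke the Duke--Friedlander--Iwaniec equidistribution theorem to obtain a positive-density set of primes on which the cosine factor is bounded away from zero, and conclude via the divergence of $\sum_{p\in S}1/p$. The only differences are cosmetic (the paper pins down roots $m_p/p\in(0,\tfrac1{16}]$ rather than thresholding $|\cos|$, and works through $S(0,n,2c,\psi)$ and $F_{2p}(n)$ rather than $A_{2p}(n)$ directly), and your unimportant constant $2\sqrt{2p}$ should be $2\sqrt{p}$.
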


For  squarefree values of  $24n-1$,
Masri \cite[Theorem 1.3]{masri-ranks}   obtained an asymptotic   formula of the form 
\begin{equation*}
\alpha(n)=M(n)+O_\ep\(n^{-\frac1{240}+\ep}\),
\end{equation*}
where $M(n)$ is a twisted  finite sum of terms $\exp(2\pi \im \tau)$ as $\tau$ ranges over distinguished Galois orbits of  Heegner points on $X_0(6)$. This relies on a general power saving bound for traces of modular functions over such orbits, as well as 
a result of  Alfes \cite{alfes} which relates the values $\alpha(n)$ to traces of certain real-analytic modular functions.

As a consequence of  Theorem~\ref{thm:powersave} and  the results  in Section~\ref{sec:heegnerconvert}, 
we   obtain an asymptotic formula for $\alpha(n)$ as a sum over a set of quadratic points in the upper half-plane $\H$.
Suppose that  $D>0$, and
define
\[
	\sQ_{-D, 12} := \left\{ ax^2+bxy+cy^2: b^2-4ac=-D, \ 12\mid a, \ a>0 \right\}.
\] 
Then $\Gamma_0(12)$ acts on this set from the left and preserves     $b\pmod{12}$.
If $Q=[12a, b, c]\in \sQ_{-D, 12}$, 
 define $\chi_{-12}(Q)=\pfrac{-12}{b}$,
and let  $\tau_Q$ denote the root of $Q(\tau, 1)$ in $\H$, so that $g\tau_Q=\tau_{gQ}$ for $g\in \Gamma_0(12)$. From this discussion the summands
in the next theorem are well-defined.

\begin{theorem}\label{thm:alg}
Suppose that $24n-1$ is positive and squarefree.  Then for all $\ep>0$ and  $\gamma>0$  we have 
\[
\alpha(n)=\frac{i}{\sqrt{24n-1}}\sum_{\substack{Q\in \Gamma_\infty\backslash \sQ_{1-24n, 12}  \\ \im \tau_Q>\gamma}}
\chi_{-12}(Q)\(e(\tau_Q)-e(\overline\tau_Q)\)+O_{\gamma, \ep}\(n^{-\frac1{147}+\ep}\).\]
\end{theorem}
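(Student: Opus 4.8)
The plan is to transform the truncated main term of Andrews' formula, as isolated in \eqref{eq:Rdef}, term by term in $c$, into the sum over quadratic points in the statement, and then to read off the error directly from Theorem~\ref{thm:powersave}. The two inputs are the elementary identity $I_{\frac12}(x)=\sqrt{2/(\pi x)}\,\sinh x$ for the half-integral Bessel function and the character sum identity proved in Section~\ref{sec:heegnerconvert}, which converts the generalized Kloosterman sum $A_{2c}$ into a twisted exponential sum over the middle coefficients of forms in $\sQ_{1-24n,12}$.

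First I would record the relevant geometry. A form $Q\in\sQ_{1-24n,12}$ with leading coefficient $12c$ has $\tau_Q=\tfrac{-b+i\sqrt{24n-1}}{24c}$, hence $\im\tau_Q=\tfrac{\sqrt{24n-1}}{24c}$, and a short computation gives
\[
e(\tau_Q)-e(\overline\tau_Q)=-2\exp\!\pfrac{-\pi i b}{12c}\,\sinh\!\pfrac{\pi\sqrt{24n-1}}{12c}.
\]
Under the translation action of $\Gamma_\infty$ the leading coefficient $12c$ is preserved while $b$ shifts by $24c$, so the classes in $\Gamma_\infty\backslash\sQ_{1-24n,12}$ are indexed by pairs $(c,b)$ with $b$ running modulo $24c$ subject to $b^2\equiv 1-24n\pmod{48c}$, and the truncation $\im\tau_Q>\gamma$ is exactly $c<\sqrt{24n-1}/(24\gamma)$. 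Inserting the closed form of $I_{\frac12}$ into \eqref{eq:Rdef} and comparing, the Bessel factor at level $c$ agrees with $\sinh(\pi\sqrt{24n-1}/(12c))$ up to the explicit constant $\pi(24n-1)^{1/4}/\sqrt{24c}$.

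It then remains to sum $\chi_{-12}(Q)=\pfrac{-12}{b}$ against the phase $\exp(-\pi i b/(12c))$ over the residues $b$ attached to a fixed $c$. This is precisely the content of the character sum identity of Section~\ref{sec:heegnerconvert}: up to the bookkeeping constant above and the sign $(-1)^{\lfloor(c+1)/2\rfloor}$, the result is the Kloosterman sum $A_{2c}(n-\tfrac{c(1+(-1)^c)}{4})$ with its Dedekind-sum multiplier. Tracking the constants then identifies the restriction of $\tfrac{i}{\sqrt{24n-1}}\sum_Q\chi_{-12}(Q)\bigl(e(\tau_Q)-e(\overline\tau_Q)\bigr)$ to $\im\tau_Q>\gamma$ with the main term of \eqref{eq:Rdef} truncated near $N=\sqrt{24n-1}/(24\gamma)$.

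Since this cutoff is, up to finitely many boundary terms each of size $O(n^{-\frac12+\ep})$, a truncation at a fixed multiple of $\sqrt n$, Theorem~\ref{thm:powersave} bounds the residual error by $O_{\gamma,\ep}(n^{-\frac1{147}+\ep})$, completing the proof. The genuinely substantive input is the character sum identity, which is established separately in Section~\ref{sec:heegnerconvert}; within the present reduction the only delicate points are the exact bijection between $\Gamma_\infty$-classes of forms and the Kloosterman data and the matching of all constants, signs, and phases, together with the harmless comparison of the sharp cutoff $\im\tau_Q>\gamma$ with the integer truncation in \eqref{eq:Rdef}.
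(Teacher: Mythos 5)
Your proposal is correct and follows essentially the same route as the paper: the paper likewise combines the bijection between $\Gamma_\infty$-classes and pairs $(c,b)$, the identity $I_{\frac12}(x)=\sqrt{2/(\pi x)}\,\sh x$, and the Section~\ref{sec:heegnerconvert} character sum identity (Proposition~\ref{lem:fkmk}) to match the quadratic-point sum with the truncated Kloosterman sum, then invokes Theorem~\ref{thm:powersave}. The only difference is bookkeeping: the paper routes through the sums $S(0,n,c,\psi)$ and \eqref{eq:alphafinal} (via Proposition~\ref{prop:algtoan}), while you work directly with Andrews' $A_{2c}$ truncation \eqref{eq:Rdef}; these are equivalent by Lemma~\ref{lem:fq-kloos}.
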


Our methods depend on  bounds for sums of Kloosterman sums attached to a half-integral weight multiplier on $\Gamma_0(2)$
 which are uniform with respect to all parameters. These depend in turn on the spectral theory of Maass forms, and in particular on 
 average bounds for the coefficients $\rho(n)$ of such forms which are uniform both in the argument $n$ and with respect to the Laplace eigenvalue $\lambda$.
 We rely on three such bounds to treat various ranges of the parameters involved. The first two are recent results of Andersen-Duke and of the first author with Andersen.
  The third is a new bound which is important in obtaining the exponent $-\frac1{147}$ which appears in the theorems above. 
 This  result (which holds in any level) gives a significant improvement in $\lambda$ aspect at the cost of a small loss in $n$ aspect; it is recorded  as  Theorem~\ref{trade} below.

 The series \eqref{eq:andrews-infinity} is reminiscent of Rademacher's well-known series
  \cite{rademacher-partition-function,rademacher-partition-series} for the ordinary partition function $p(n)$:
 \begin{equation} \label{rad}
p(n)=\frac{2 \pi}{(24n-1)^{\frac{3}{4}}} \sum_{c=1}^{\infty} \frac{A_c(n)}{c} I_{\frac{3}{2}}  \pfrac{\pi \sqrt{24n-1}}{6c}
\end{equation}
(this does converge absolutely, in contrast with \eqref{eq:andrews-infinity}).
A classical problem is to estimate the error associated with truncating this series, and the methods of this paper 
give an improvement for this estimate.  In analogy with \eqref{eq:Rdef}, define $S(n, N)$ by 
\begin{equation*}
p(n):=\frac{2 \pi}{(24n-1)^{\frac{3}{4}}}
 \sum_{c\leq N} \frac{A_c(n)}{c} I_{\frac{3}{2}}  \pfrac{\pi \sqrt{24n-1}}{6c} +S(n, N).
\end{equation*}
Rademacher \cite{rademacher-partition-function,rademacher-partition-series} proved that 
$S(n,\gamma \sqrt{n}) \ll_\gamma n^{-\frac{1}{4}}$, and Lehmer \cite{lehmer-series} improved this to 
 $S(n,\gamma \sqrt{n}) \ll_\gamma n^{-\frac12}\log n$. 
When $24n-23$ is squarefree, Folsom and Masri \cite{folsom-masri} proved that 
\begin{equation*}
S \left(n, \sqrt{\mfrac{n}{6}} \right) \ll n^{-\frac{1}{2}-\delta} \quad \text{for some} \quad \delta>0.
\end{equation*} 
Recently the first author and Andersen \cite{ahlgren-andersen} obtained the bound
\begin{equation*} \label{AAremainder}
S \left(n, \gamma \sqrt{n} \right) \ll_{\gamma, \ep} n^{-\frac{1}{2}-\frac{1}{168}+\ep}.
\end{equation*}
As another application of Theorem~\ref{trade}, we obtain the following.
 \begin{theorem}\label{thm:pofn}
Suppose  that $24n-23$ is positive and squarefree. Then for all $\ep>0$ and $\gamma>0$ we have
\begin{equation*} 
S(n,\gamma \sqrt n)\ll_{\gamma,\ep} n^{-\frac12-\frac1{147}+\ep}.
\end{equation*}
\end{theorem}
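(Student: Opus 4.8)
The plan is to follow the template of the proof of Theorem~\ref{thm:powersave}, replacing the mock theta Kloosterman data by the classical data attached to the eta-multiplier. First I would write $S(n,N)$ as the tail of Rademacher's series \eqref{rad}:
\[
S(n,\gamma\sqrt n)=\frac{2\pi}{(24n-1)^{3/4}}\sum_{c>\gamma\sqrt n}\frac{A_c(n)}{c}\,I_{\frac32}\pfrac{\pi\sqrt{24n-1}}{6c}.
\]
For $c>\gamma\sqrt n$ the argument of the Bessel function is $O_\gamma(1)$, so using $I_{\frac32}(x)\asymp x^{3/2}$ as $x\to 0$ (together with the exact small-argument expansion to control the transition region near $c\asymp\sqrt n$) a single term has size $\asymp n^{3/4}c^{-5/2}A_c(n)$ once the prefactor $(24n-1)^{-3/4}$ is absorbed. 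Inserting the Weil-type bound $|A_c(n)|\ll c^{1/2+\ep}$ and summing $\sum_{c>\gamma\sqrt n}c^{-2}\ll_\gamma n^{-1/2}$ recovers exactly the classical estimate of Lehmer; the entire content of the theorem is the extraction of the additional factor $n^{-1/147}$ from cancellation among the $A_c(n)$. (Note that here the Bessel order $\tfrac32$, rather than $\tfrac12$ as in the mock theta case, is what makes the baseline an honestly convergent $n^{-1/2}$ rather than $n^\ep$.)

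Second, I would reduce the estimate, by partial summation against the smooth and rapidly decaying weight $c^{-1}I_{\frac32}\pfrac{\pi\sqrt{24n-1}}{6c}$, to a power-saving bound for the partial Kloosterman sums $\sum_{c\le x}A_c(n)/c$. These are precisely the objects controlled by the spectral machinery of the paper: after the standard dictionary the $A_c(n)$ become the Kloosterman sums attached to a half-integral weight multiplier on $\Gamma_0(2)$, with one index fixed by the principal part of $1/\eta$ and the other varying with $n$. The hypothesis that $24n-23$ is positive and squarefree plays exactly the role that squarefreeness of $24n-1$ plays in Theorem~\ref{thm:powersave}: it guarantees that the varying index governing the relevant coefficients is, up to bounded factors, a fundamental discriminant, so that the Waldspurger-type bounds underlying the spectral estimates apply cleanly, without the complications introduced by oldforms.

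Third, I would apply the Kuznetsov-type formula for this multiplier to pass from $\sum_{c\le x}A_c(n)/c$ to the spectral side $\sum_j \rho_j(\cdot)\,\overline{\rho_j(n)}\,\hat h(t_j)+(\text{Eisenstein})$, and then decompose the spectrum into ranges according to the size of the spectral parameter $t_j$ relative to $x$ and $n$. On each range I would invoke the appropriate one of the three average bounds for the coefficients $\rho_j(n)$: the results of Andersen--Duke and of the first author with Andersen for the extreme ranges, and the new Theorem~\ref{trade} for the intermediate range, where its strong saving in the $\lambda$-aspect (at a controlled cost in the $n$-aspect) is decisive. Because the $A_c(n)$ are identical to those in the mock theta application and the spectral inputs are the same, the optimization of the cutoffs produces the same power saving $n^{-1/147+\ep}$; here it sits atop the convergent baseline $n^{-1/2}$, yielding $S(n,\gamma\sqrt n)\ll_{\gamma,\ep}n^{-1/2-1/147+\ep}$.

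The main obstacle is the uniform bookkeeping in this final optimization. One must carry the joint dependence on $n$ and on the spectral parameter through the Kuznetsov transform, control the contribution of the continuous spectrum (which for a half-integral weight multiplier requires care with the metaplectic Eisenstein series and the associated Dirichlet-series factors), and choose the range cutoffs so that the deliberate $n$-aspect loss built into Theorem~\ref{trade} is always dominated by its $\lambda$-aspect gain. As in Theorem~\ref{thm:powersave}, the delicate point is not any single estimate but the simultaneous balancing of all three spectral inputs so as to squeeze out the optimal exponent; the faster decay of the weight coming from $I_{\frac32}$ only improves the convergence and does not affect this balancing.
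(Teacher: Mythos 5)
Your overall plan does coincide with the paper's: write $S(n,\gamma\sqrt n)$ as the tail of \eqref{rad}, reduce by partial summation against the Bessel weight to a power-saving bound for the partial sums $\sum_{c\le x}A_c(n)/c$, and estimate these by the mixed-sign Kuznetsov formula together with the three average coefficient bounds, with Theorem~\ref{trade} supplying the improved estimate in the intermediate range of the spectral parameter and the same final choice $\delta=\frac1{49}$ (the paper compresses all of this into a citation of \cite{ahlgren-andersen}, \S 9--\S 10, plus the single new estimate). However, your ``standard dictionary'' step is wrong in a way that would break the execution. The sums $A_c(n)$ are attached to the eta-multiplier on the \emph{full modular group}: up to a fixed root of unity, $A_c(n)$ equals $S(1,1-n,c,\nu_\eta)$, a sum over all moduli $c$ at level one. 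They are not the Kloosterman sums of a multiplier on $\Gamma_0(2)$, and they are certainly not ``identical to those in the mock theta application'': those are the sums $S(0,n,2c,\psi)$, supported on even moduli and attached to the multiplier $\psi$ which is specific to $f(q)$. If the two families were identical, Theorem~\ref{thm:pofn} would follow at once from Theorem~\ref{tradesum} by partial summation; the actual content of the paper's argument is that the spectral analysis must be redone for $\sLT_\frac12(1,\nu_\eta)$: one takes an orthonormal basis $\{u_j\}$ (each $u_j$ cuspidal, with $r_j>1$), maps it by $\tau\mapsto24\tau$ into level $576$ (not $144$) with the twisted theta-multiplier, uses the coefficient relation $\rho_j(n)=b_j(24n-23)$ with the varying index now negative, and then inserts Theorem~\ref{trade} to obtain the intermediate-range bound $\ll n^{\frac{143}{147}+\ep}x^{-\frac32}$ which replaces estimate (9.11) of \cite{ahlgren-andersen}.

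Two further corrections. First, your anticipated difficulty with the continuous spectrum is vacuous: for $\nu_\eta$ (as for $\psi$) no cusp is singular, so there are no Eisenstein series at all; moreover, since the two indices carry opposite signs ($1_{\nu_\eta}=\frac1{24}>0$ and $(1-n)_{\nu_\eta}<0$), the holomorphic terms are absent as well, so the spectral side of Proposition~\ref{Kuz} is purely cuspidal --- this is exactly why the mixed-sign formula is the tool of choice here. Second, the role of the squarefree hypothesis is not to make Waldspurger-type bounds or newform theory apply cleanly; it is required because Proposition~\ref{avgduke} and Theorem~\ref{trade} rest on Iwaniec's device of averaging Kloosterman sums over prime levels (Proposition~\ref{prop:Iwanbound}), and that method needs the index of the varying coefficient to be squarefree.
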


 We close with a  brief outline of the contents of the paper.  In the next section, we develop some background material on 
 the spectral theory of automorphic forms  and Kloosterman sums.  In Section~\ref{sec:fq} we develop the properties
 of a particular multiplier which is related to the coefficients of $f(q)$.
 
To prove  Theorem~\ref{thm:powersave} requires  bounds for sums of Kloosterman sums which are uniform in all parameters.
The analysis in Section 7 is similar to that of \cite{ahlgren-andersen}
(and is similar in spirit to the work of Sarnak and Tsimerman \cite{sarnak-tsimerman} in weight $0$ on $\SL_2(\Z)$, although significant
complications arise from the multiplier of weight $\frac12$).
The Kuznetsov trace formula is the basic tool to relate sums of Kloosterman sums to the coefficients of Maass forms. 
 Section~\ref{Kuznetsov1} contains a version of the Kuznetsov formula in the mixed sign case for half integral weight multipliers,
 Section~\ref{sec:endgame} contains the analysis which proves Theorem~\ref{thm:powersave}, and   Section~\ref{sec:pofn} contains a  sketch
of the proof of Theorem~\ref{thm:pofn}.

 In Section~\ref{sec:ell2} we state  three  estimates for coefficients of Maass forms which are crucial for our work.
 The first is a mean value estimate which was recently proved by  Andersen and Duke \cite{andersen-duke}.
 The second, which  was proved in \cite{ahlgren-andersen}, is an average version of a well-known result of Duke \cite{duke-half-integral}.

 The third is the average estimate Theorem~\ref{trade} mentioned above;  the proof of this result is quite involved, and occupies Section~\ref{sec:main}.
 We make crucial use of a  version of the Kuznetsov trace formula which appears in a recent paper of Duke, Friedlander and Iwaniec 
 \cite{duke-fried-iwan2017}.  We follow the basic method of Duke \cite{duke-half-integral} but with a modified test function which
 leads to a savings with respect to the spectral parameter.
   Duke's method relies in turn on estimates of Iwaniec \cite{iwaniec-fourier-coefficients}
 for averages of Kloosterman sums in level aspect.  One of the terms arising from the Kusnetsov formula is an infinite sum over  weights $\ell$ of spaces of holomorphic cusp forms, and much of  the technical difficulty arises from the need to bound the summands uniformly in terms of $\ell$.

 In Section~\ref{sec:heegnerconvert} we prove the  key identity Proposition~\ref{lem:fkmk} which expresses the Kloosterman sums arising in \eqref{eq:andrews}
 as Weyl-type sums, and we use this identity to prove Theorem~\ref{thm:alg}.
Finally, in the last section we prove Theorem~\ref{thm:abs_conv_not}.
 
In the body of the paper we make the following convention: in equations which involve an arbitrary small positive quantity $\ep$, 
the constants which are implied by the notation $\ll$ or $O$ are allowed to depend on $\ep$.  Any other dependencies in the implied constants
will be explicitly noted.

\section*{Acknowledgments}
We thank Nick Andersen and Wadim Zudilin for their helpful comments.  We also thank the referee for comments which improved our
exposition.

\section{Background}\label{sec:back}
We begin with some brief background material on Maass  forms with general weight and multiplier. 
 For more details one may consult \cite[Section 2]{ahlgren-andersen}, 
\cite[Section 2]{duke-fried-iwan2017} (where it is assumed  that the cusp $\infty$ is singular),
  or \cite{sarnak-additive}.

Let $k$ be a real number and let $\H$ denote the upper half-plane.
For $\gamma=\pMatrix abcd\in \SL_2(\R)$  and $\tau=x+iy\in \H$, we define
\[
 j(\gamma,\tau) := \frac{c\tau+d}{|c\tau+d|} = e^{i\arg(c\tau+d)}\]
 and the weight $k$ slash operator by 
 \[f\sl_k \gamma := j(\gamma,\tau)^{-k} f(\gamma \tau),\]
 where we choose the argument in $(-\pi,\pi]$.
 
For each $k$, the  Laplacian
\begin{equation}\label{eq:delta_def}
	\Delta_k := y^2 \bigg( \frac{\partial^2}{\partial x^2} + \frac{\partial^2}{\partial y^2} \bigg) - iky \frac{\partial}{\partial x}
\end{equation}
 commutes with the weight $k$ slash operator.
 
For simplicity we will work only with the groups  $\Gamma_0(N)$ for $N\in \N$ and with weights $k\in \frac12\Z$,
although much of what is said here holds in more generality.
 Let $\Gamma$ denote such a group.
 We say that $\nu: \Gamma \rightarrow \C^{\times}$ is a multiplier system of weight $k$ if 
\begin{itemize}
\item $|\nu|=1$
\item $\nu(-I)=e^{-\pi i k}$, and 
\item $\nu(\gamma_1 \gamma_2) j(\gamma_1 \gamma_2,\tau)^k=\nu(\gamma_1) \nu(\gamma_2) j(\gamma_2,\tau)^k j(\gamma_1, \gamma_2 \tau)^k$ for all $\gamma_1,\gamma_2 \in \Gamma$.
\end{itemize}

Given a cusp $\mathfrak{a}$, let $\Gamma_{\mathfrak{a}}:=\{\gamma \in \Gamma: \gamma \mathfrak{a}=\mathfrak{a} \}$ denote the  
stabilizer in $\Gamma$ and let $\sigma_{\mathfrak{a}}$ denote the unique (up to translation on the right) matrix in 
$\SL_2(\mathbb{R})$ satisfying $\sigma_{\mathfrak{a}} \infty=\mathfrak{a}$ and
 $\sigma_{\mathfrak{a}}^{-1} \Gamma_{\mathfrak{a}} \sigma_{\mathfrak{a}}=\Gamma_{\infty}$. 
 Define $\alpha_{\nu,\mathfrak{a}} \in [0,1)$ by the condition 
\begin{equation*}
\nu \( \sigma_{\mathfrak{a}} \pMatrix 1101
 \sigma_{\mathfrak{a}}^{-1} \)=e \({-\alpha_{\nu,\mathfrak{a}}} \).
\end{equation*}
The cusp $\mathfrak{a}$ is singular with respect to  $\nu$ if $\alpha_{\nu,\mathfrak{a}}=0$.
When  $\mathfrak a=\infty$ we suppress the subscript.

If $\nu$ is multiplier of weight $k$, then it is a multiplier in any weight $k'\equiv k\mod 2$, and 
 $\bar\nu$ is a multiplier of weight $-k$. 
If $\alpha_{\nu}=0$ then  $\alpha_{\bar\nu}=0$, while 
 if $\alpha_{\nu}>0$ then $\alpha_{\bar\nu}=1-\alpha_{\nu}$.
For $n\in \Z$ we define
\[n_\nu:=n-\alpha_\nu;\]
then we have 
\begin{equation}\label{eq:n_nu_conj}
n_{\bar\nu}=\begin{cases} 
			-(1-n)_\nu\quad&\text{if $\alpha_\nu\neq 0$},\\
			n\quad&\text{if $\alpha_\nu= 0$}.
		\end{cases}
\end{equation}

With this notation we  define the generalized Kloosterman sum (at the cusp $\infty$) by
\begin{equation}\label{eq:kloos_def}
	S(m,n,c,\nu) := \sum_{\substack{0\leq a,d<c \\ \gamma=\pmatrix abcd\in \Gamma}} \bar\nu(\gamma) e\pfrac{m_\nu a+n_\nu d}{c}.
\end{equation}
We have the relationships
\begin{equation}\label{eq:nuconj}
\overline{S(m,n,c,\nu)}=
	\begin{cases}
	S(1-m, 1-n, c, \overline\nu)&\quad\text{if $\alpha_\nu>0$,}\\
	S(-m, -n, c, \overline\nu)&\quad \text{if $\alpha_\nu=0$.}
	\end{cases}
\end{equation}	

Two important multipliers of weight $\frac12$ are 
 the  eta-multiplier $\nu_\eta$ on $\SL_2(\Z)$, given by 
\begin{equation}\label{eq:etamult}
\eta(\gamma\tau)=\nu_\eta(\gamma)\sqrt{c\tau+d}\,\eta(\tau), \qquad \gamma=\pMatrix abcd\in \SL_2(\Z),
\end{equation}
and the theta-multiplier $\nu_\theta$ on $\Gamma_0(4)$, given by 
\begin{equation}\label{eq:thetamult}
\theta(\gamma\tau)=\nu_\theta(\gamma)\sqrt{c\tau+d}\,\theta(\tau), \qquad \gamma=\pMatrix abcd\in \Gamma_0(4).
\end{equation}
Here $\eta(\tau)$ and $\theta(\tau)$ are the two fundamental theta functions
\[\begin{aligned}
\eta(\tau)&:=q^\frac1{24}\prod_{n=1}^\infty(1-q^n),\\
\theta(\tau)&:=\sum_{n=-\infty}^\infty q^{n^2},
\end{aligned}
\]
where we use the standard notation
\[q:=e(\tau)=e^{2\pi i\tau}.\]

For $\nu_\theta$ we have the formula
\begin{equation} \label{eq:def-theta-mult}
	\nu_\theta \ppMatrix abcd = \pfrac cd \ep_d^{-1},
\end{equation}
where $\ptfrac\bullet\bullet$ is the extended  Kronecker symbol and 
\[
	\ep_d =
	\begin{cases}
		1 & \text{ if }d\equiv 1\pmod 4, \\
		i & \text{ if }d\equiv 3\pmod 4.
	\end{cases}
\]
From this we obtain
\begin{equation}\label{eq:thetaconj}
\bar{\nu_\theta}(\gamma)=\pmfrac{-1}d\nu_\theta(\gamma), \qquad \gamma=\pMatrix abcd\in \Gamma_0(4).
\end{equation}

With $d\mu:=\frac{dx\, dy}{y^2}$, 
define
\[\lVert f \rVert^2=\int_{\Gamma_0(N)\backslash \H}|f|^2\, d\mu.\]
Denote by $\sL_k(N, \nu)$ the space of $L^2$ functions which satisfy
\begin{equation}\label{eq:trans}
f(\gamma \tau)=j(\gamma, \tau)^k\nu(\gamma)\,f(\tau)\qquad \text{for all $\gamma\in \Gamma_0(N)$.}
\end{equation}
Let $\sB_k(N, \nu)$ denote the subspace of $\sL_k(N, \nu)$ consisting of smooth functions $f$ such that $f$ and $\Delta_k f$ are bounded on $\H$.
Then  $\Delta_k$  has a unique self-adjoint extension to $\sL_k(N, \nu)$, which we also denote by $\Delta_k$.
For each singular cusp $\mathfrak a$ (and only at such cusps) 
there is an Eisenstein series $E_{\mathfrak a}(z, s)$.  These provide the continuous spectrum, which covers  $[1/4, \infty)$.

The reminder of the spectrum is discrete and of finite multiplicity. We denote the discrete spectrum by 
\[\lambda_0\leq \lambda_1\leq\dots, \]
where we have
\[\lambda_0\geq \frac{|k|}2\(1-\frac{|k|}2\).\]
One component of the discrete spectrum is provided by residues of the Eisenstein series 
$E_{\mathfrak a}(z,s)$ at possible simple poles $s$ with $\frac12<s\leq 1$;
the corresponding  eigenvalues  have $\lambda<\frac14$. 
 The remainder of the discrete spectrum
arises from   Maass cusp forms.

Denote by $\sLT_k(N, \nu)$ the subspace of $\sL_k(N, \nu)$ spanned by eigenfunctions of $\Delta_k$.
If $f\in \sLT_k(N, \nu)$ has Laplace eigenvalue $\lambda$, then 
we write
\[\lambda=\frac14+r^2,\qquad r\in  i\,(0, i/4]\cup[0, \infty),\]
and refer to $r$ as the spectral parameter of $f$.  
Denote by  $\sLT_k(N, \nu, r)$  the subspace of such  functions. 
Let $W_{\kappa, \mu}$ denote the usual $W$-Whittaker function.
Then each  $f\in \sLT_k(N, \nu, r)$  
has a Fourier expansion of the form
\begin{equation}\label{eq:f_fourier}
f(\tau)=c_{0}(y) +  \sum_{n_\nu\neq 0} \rho(n) W_{\frac{k\sgn(n_\nu)}2, ir}(4\pi |n_\nu| y)e(n_\nu x),
\end{equation}
where
\[
c_{0}(y)=\begin{cases} 0\quad&\text{if $\alpha_\nu\neq 0$},\\
				      0\quad&\text{if $\alpha_\nu=0$ and $r\geq 0$,}\\
				      \rho(0)y^{\frac12+i r}&\text{if $\alpha_\nu=0$ and $r\in i(0, 1/4]$,}
		\end{cases}
\]				      	
with coefficients $\rho(n)$ (see \cite[p. 3878]{proskurin-new} or \cite[p. 2509]{duke-fried-iwan2017})  .  Note that in the last case, we have $\rho(0)\neq 0$ only when $f$ arises as a residue.

Complex conjugation gives an isometry (of normed spaces)
\[\sLT_k(N, \nu, r)\longleftrightarrow \sLT_{-k}(N, \bar\nu, r).\]
If $f\in \sLT_k(N, \nu, r)$, then using \eqref{eq:f_fourier}   with \eqref{eq:n_nu_conj} and 
the fact that $W_{\kappa, \mu}\in \R$ when $\kappa\in \R$ and  $\mu\in \R\cup i\R$
\cite[(13.4.4), (13.14.3), (13.14.31)]{nist},
we find that the coefficients $\rho_c(n)$ of $f_c:=\bar f$ satisfy
\begin{equation}\label{eq:acconj}
\rho_c(n)=\begin{cases}
	\bar{\rho(1-n)}&\quad\text{if $\alpha_\nu>0$ and $n\neq 0$,}\\
	\bar{\rho(-n)}&\quad \text{if $\alpha_\nu=0$.}
	\end{cases}
\end{equation}	

The Maass lowering operator
\begin{equation*}
	L_k := \frac k2 + 2iy \frac{\partial}{\partial \bar \tau} = \frac k2 + iy \left(\frac{\partial}{\partial x} + i\frac{\partial}{\partial y}\right)
\end{equation*}
gives a map 
\[
\sLT_k(N, \nu,r)\longrightarrow \sLT_{k-2}(N, \nu,r)
\]
and satisfies
\begin{equation}\label{eq:L-k-norm}
\lVert L_k f \rVert^2 = \(r^2 + \mfrac{(k-1)^2}{4}\) \lVert f \rVert^2
=\(\lambda-\mfrac k2\(1-\mfrac k2\)\)\lVert f \rVert^2.
\end{equation}

From the last equation, we see that if  $f\in \sLT_k(N, \nu)$ has the minimal eigenvalue $\frac{|k|}2\(1-\frac{|k|}2\)$,  then $f(\tau)$ is in the kernel of 
$L_k$ if $k\geq 0$, and $\bar f(\tau)$ is in the kernel of $L_{-k}$ if $k<0$.
Let $M_k(N, \nu)$ denote the space of holomorphic modular forms of weight $k$ and multiplier $\nu$ on $\Gamma_0(N)$.
Using \eqref{eq:trans}, it follows that the function
\[
F(\tau):=\begin{cases} y^{-\frac k2} f(\tau)\quad&\text{if $k\geq 0$},\\
				   y^{\frac k2} \bar{f}(\tau)\quad&\text{if $k< 0$}\\	
		\end{cases}
\]
lies in  $M_k(N, \nu)$  if $k\geq 0$ and in $M_{-k}(N, \bar\nu)$ if $k<0$.
Also, $F(\tau)$ is a cusp form if and only if $f(\tau)$ is a Maass cusp form.

Suppose that $f\in \sLT_k(N, \nu, r)$.  Then using \eqref{eq:f_fourier} and
\cite[(2.16)]{ahlgren-andersen}, we have the expansion
\begin{equation}\label{eq:Lk_fourier}
L_k f(\tau)=c^{(L)}_{0}(y) +  \sum_{n_\nu\neq 0} \rho^{(L)}(n) 
W_{\frac{(k-2)\sgn(n_\nu)}2, ir}(4\pi |n_\nu| y)e(n_\nu x),
\end{equation}
where
\begin{equation}\label{eq:Lk_coeff}
\rho^{(L)}(n)=\begin{cases}
	-\(r^2 + \mfrac{(k-1)^2}{4}\)\rho(n)&\quad\text{if $n_\nu>0$,}\\
	\rho(n)&\quad \text{if $n_\nu<0$,}
	\end{cases}
\end{equation}
and 
\[
c^{(L)}_{0}(y)=\(\mfrac{k-1}2-ir\) c_0(y).
\]

\section{A multiplier on $\Gamma_0(2)$ and a  formula for the coefficients of $f(q)$}\label{sec:fq}
Here we relate the coefficients $\alpha(n)$ to Kloosterman sums attached to a multiplier $\psi$ on $\Gamma_0(2)$.
For the eta-multiplier defined in the last section,
 we have a formula of Rademacher \cite[(74.11), (74.12)]{rademacher-book} which is valid for $c>0$: 
\begin{equation} \label{eq:chi-dedekind-sum}
	\nu_\eta(\gamma) = \sqrt{-i} \, e^{-\pi i s(d,c)} \, e\pfrac{a+d}{24c},
\end{equation}
where $s(d,c)$ is the Dedekind sum
\begin{equation}  \label{eq:ded-sum-def}
	s(d,c) = \sum_{r=1}^{c-1} \mfrac rc \, \left(\mfrac{dr}{c} - \left\lfloor\! \mfrac{dr}{c}\!\right\rfloor - \mfrac 12\right).	
\end{equation}
For  $c>0$  and $\gamma=\pmatrix abcd$ we have another convenient formula \cite[\S 4.1]{knopp}
\begin{equation} \label{eq:chi-kronecker-symbol}
	\nu_\eta(\gamma) = 
	\begin{dcases}
		\( \mfrac dc \) \, e\(\mfrac 1{24} \left[(a+d)c-bd(c^2-1)-3c\right]\) & \text{ if $c$ is odd}, \\
		\(\mfrac cd \) \, e\(\mfrac 1{24} \left[(a+d)c-bd(c^2-1)+3d-3-3cd\right]\) & \text{ if $c$ is even.}
	\end{dcases}
\end{equation}
We have 
$\nu_\eta\(\pm\pmatrix 1b01\)=e\pfrac b{24}$.  Finally, 
if $c>0$ we   have $\nu_\eta(-\gamma)=i \nu_\eta(\gamma)$ (this follows since $\gamma$ and $-\gamma$ act the same way on $\H$).

For $\gamma=\pMatrix abcd\in \Gamma_0(2)$ define 
\begin{equation}\label{eq:psidef}
\psi \(\gamma\)= \begin{cases}
i^ {c/2}\pfrac{-1}d \overline{\nu_\eta}(\gamma) & \text{if} \quad c\equiv 0\pmod 4, \\
i^ {c/2} \overline{\nu_\eta}(\gamma) & \text{if} \quad c\equiv 2\pmod 4.
 \end{cases}
\end{equation}
One can compute using  \eqref{eq:chi-dedekind-sum} to see that the real-analytic 
form $\tilde M$  appearing on page 251 of  \cite{bringmann-ono} 
satisfies 
\[\tilde M(\gamma\tau)=\psi(\gamma)(c\tau+d)^\frac12\tilde M(\tau)\qquad\text{for all $\gamma=\pMatrix abcd\in \Gamma_0(2)$.}
\]
(this can also be derived from  \cite[Theorem~2.2]{andrews}).
By  \cite[Proposition 2.1]{hejhal-stf2} it follows that $\psi$ is a multiplier of weight $\frac12$ on $\Gamma_0(2)$.

For the cusp $\infty$, we have
$ \psi\(\pmatrix 1101 \)= e \({-\frac1{24} }\),$
so that $\alpha_\psi=\frac1{24}$.
For the cusp $0$ we may take $\sigma_0=\pmatrix0 {-1/\sqrt{2}} {\sqrt{2}} 0$.
Then the formulas above give 
 \begin{equation*}
\psi \(\sigma_0 \pMatrix 1101\sigma_0^{-1}\)=\psi \(\pMatrix 10{-2}1\)=e\(-\mfrac{1}{3}\).
\end{equation*}
\begin{lemma}\label{lem:fq-kloos}  Let $A_c(n)$ and $\psi$ be defined as in \eqref{eq:acdef}, \eqref{eq:psidef}.  Then for $c>0$ we have 
\[
(-1)^{\lfloor \frac{c+1}{2} \rfloor} A_{2c} \(n-\frac{c(1+(-1)^c)}{4} \)=e\pmfrac18\overline{S(0,n,2c,\psi)}.
\]
\end{lemma}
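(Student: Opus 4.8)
The plan is to expand $\overline{S(0,n,2c,\psi)}$ directly from the definition \eqref{eq:kloos_def}, simplify the summand until it depends only on the lower-right entry $d$, and then match the result term-by-term against the definition \eqref{eq:acdef} of $A_{2c}$. Since $\alpha_\psi=\frac1{24}$ we have $0_\psi=-\frac1{24}$ and $n_\psi=n-\frac1{24}$, so conjugating \eqref{eq:kloos_def} (and writing the modulus as the lower-left entry $2c$) gives
\[
\overline{S(0,n,2c,\psi)}=\sum_{\substack{0\le a,d<2c\\ \gamma=\pmatrix ab{2c}d\in\Gamma_0(2)}}\psi(\gamma)\,e\pfrac{a+d}{48c}\,e\pfrac{-nd}{2c},
\]
where I have combined $-\tfrac a{24}+(n-\tfrac1{24})d$ over the denominator $2c$ and split off the factor $e(-nd/2c)$.

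First I would substitute the definition \eqref{eq:psidef} of $\psi$ (with lower-left entry $2c$, so the prefactor $i^{c/2}$ there reads $i^{c}$ and the branch is chosen by the parity of $c$) together with Rademacher's formula \eqref{eq:chi-dedekind-sum} for the even entry $2c$, namely $\nu_\eta(\gamma)=\sqrt{-i}\,e^{-\pi i s(d,2c)}e\pfrac{a+d}{48c}$. The crucial observation is that the factor $e\pfrac{-(a+d)}{48c}$ coming from $\overline{\nu_\eta}(\gamma)$ exactly cancels the factor $e\pfrac{a+d}{48c}$ above, so that after cancellation the full summand depends only on $d$. For each $d$ with $(d,2c)=1$ and $0\le d<2c$ there is a unique admissible $\gamma$, since $ad\equiv 1\pmod{2c}$ determines $a$ and then $b$ is forced by $\det\gamma=1$; conversely admissibility forces $(d,2c)=1$, matching the coprimality condition in \eqref{eq:acdef}. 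Hence the sum over $\gamma$ collapses to a sum over such $d$, and using $\overline{\sqrt{-i}}=e(1/8)$ I obtain
\[
\overline{S(0,n,2c,\psi)}=e\pfrac{1}{8}\sum_{\substack{d \pmod{2c}\\ (d,2c)=1}}\kappa(d)\,e^{\pi i s(d,2c)}\,e\pfrac{-nd}{2c},
\]
where $\kappa(d)=i^{c}\pfrac{-1}{d}$ when $c$ is even and $\kappa(d)=i^{c}$ when $c$ is odd.

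It then remains to reconcile this with $A_{2c}\left(n-\frac{c(1+(-1)^c)}{4}\right)=\sum_{d}e^{\pi i s(d,2c)}e(-n'd/2c)$, where $n'=n-\frac{c(1+(-1)^c)}{4}$, and I would treat the two parities separately. For $c$ odd one has $n'=n$ and $\kappa(d)=i^c$ constant, and a direct check of fourth roots of unity gives $e(1/8)\,i^c=e(-1/8)(-1)^{(c+1)/2}$, which is the claim. For $c$ even I would write the shift as $e(-nd/2c)=e(-n'd/2c)\,e(-d/4)$, using $\frac{c(1+(-1)^c)}{4}\cdot\frac{d}{2c}=\frac d4$, and then invoke the identity $\pfrac{-1}{d}e(-d/4)=-i$, valid for every odd $d$, to collapse the character $\pfrac{-1}d$ together with the shift into the single constant $-i$. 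The remaining sum is exactly $A_{2c}(n')$, and matching $e(1/8)\cdot(-i)\,i^c=e(-1/8)(-1)^{c/2}$ completes this case; in both cases the relevant exponent equals $(-1)^{\lfloor (c+1)/2\rfloor}$.

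The main obstacle is purely one of bookkeeping among eighth- and fourth-roots of unity: verifying the identification $\overline{\sqrt{-i}}=e(1/8)$, the parity-uniform collapse $\pfrac{-1}{d}e(-d/4)=-i$ (which is precisely what converts the twisted sum into the untwisted $A_{2c}$ in the even case), and the two matchings of $i^c$ (odd $c$) and $-i\cdot i^c$ (even $c$) against $e(-1/4)(-1)^{\lfloor (c+1)/2\rfloor}$. Care is also needed to apply the even-entry branch of \eqref{eq:chi-dedekind-sum} consistently, since the lower-left entry here is $2c$ and is always even, and to keep the sign conventions of the Dedekind sum aligned so that $e^{\pi i s(d,2c)}$ appears with the same sign in both \eqref{eq:acdef} and the expansion of $\overline{\nu_\eta}$.
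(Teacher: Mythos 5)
Your proof is correct and is essentially the fleshed-out version of the paper's own one-line proof: a case-by-case computation (on the parity of $c$) matching the definition \eqref{eq:kloos_def} of $S(0,n,2c,\psi)$ against \eqref{eq:acdef}, using Rademacher's formula \eqref{eq:chi-dedekind-sum} so that the exponential factors $e\pfrac{a+d}{48c}$ cancel and the sum collapses to a sum over $d$. The only cosmetic difference is that you conjugate the Dedekind-sum factor directly rather than invoking $s(-d,c)=-s(d,c)$ as the paper does, and all of your root-of-unity verifications (including $\pfrac{-1}{d}e(-d/4)=-i$ for odd $d$) check out.
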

\begin{proof}
This follows from   a case-by-case computation using   \eqref{eq:chi-dedekind-sum}
together with the fact that $s(-d,c)=-s(d,c)$.
\end{proof}

Since $\alpha(n)\in \Z$, the formula \eqref{eq:andrews-infinity} becomes 
\begin{equation}\label{eq:alphafinal}
\alpha(n)=\frac{2 \pi} {(24n-1)^{\frac{1}{4}}}e \(-\mfrac18\) \!\!\!\!\sum_{\substack{c>0  \\  c \equiv 0  \pmod{2}}}  \frac{S(0,n,c,\psi)}{c} I_{\frac{1}{2}} \( \frac{\pi \sqrt{24n-1}}{6c} \).
\end{equation}

We will work in the space $\sLT_\frac12(2, \psi)$. 
By the discussion above, neither cusp is singular, so there are no Eisenstein series for this multiplier.
\begin{lemma}\label{lem:level-up}
For each $r$, the map $\tau\mapsto24\tau$ gives an injection
\begin{equation*}\label{eq:24tau}
\sLT_\frac12(2, \psi, r)\longrightarrow
\sLT_\frac12\(144, \pmfrac{12}\bullet\nu_\theta,r\).
\end{equation*}
\end{lemma}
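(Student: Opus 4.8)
The plan is to realize the map concretely as $g:=f\sl_{\frac12}\sigma_{24}$, where $\sigma_{24}:=\pMatrix{\sqrt{24}}00{1/\sqrt{24}}\in\SL_2(\R)$ acts by $\tau\mapsto24\tau$. Since $\sigma_{24}$ has lower-left entry $0$ and positive lower-right entry we have $j(\sigma_{24},\tau)=1$, so that $g(\tau)=f(24\tau)$. Injectivity is then immediate, since $g\equiv0$ forces $f\equiv0$ as $24\tau$ ranges over all of $\H$. Because $\Delta_{\frac12}$ commutes with the weight-$\tfrac12$ slash by every element of $\SL_2(\R)$, the function $g$ is again a $\Delta_{\frac12}$-eigenfunction with the same eigenvalue $\lambda=\tfrac14+r^2$, hence with the same spectral parameter $r$. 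Square-integrability on $\Gamma_0(144)\backslash\H$ follows from the $\SL_2(\R)$-invariance of $d\mu$ together with the automorphy established next: $|g|^2$ descends to $\Gamma_0(144)\backslash\H$ and integrates there to a finite multiple of $\lVert f\rVert^2$, because $\sigma_{24}$ conjugates $\Gamma_0(144)$ into a finite-index subgroup of $\Gamma_0(2)$.

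The heart of the argument is the transformation law. Fix $\gamma=\pMatrix abcd\in\Gamma_0(144)$, so $144\mid c$ and $\det\gamma=1$. I would first record the elementary conjugation identity that, as fractional linear transformations, $24\cdot(\gamma\tau)=\gamma'(24\tau)$, where
\[
\gamma':=\pMatrix a{24b}{c/24}d .
\]
Here $\det\gamma'=ad-bc=1$, and $c/24=6\cdot(c/144)$ is divisible by $6$, so $\gamma'\in\Gamma_0(6)\subseteq\Gamma_0(2)$, where $\psi$ is defined. The key point is that the two automorphy factors agree: $(c/24)(24\tau)+d=c\tau+d$, whence $j(\gamma',24\tau)=j(\gamma,\tau)$. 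Applying \eqref{eq:trans} for $f$ at the point $24\tau$ then gives
\[
g(\gamma\tau)=f\bigl(\gamma'(24\tau)\bigr)=j(\gamma',24\tau)^{\frac12}\psi(\gamma')\,f(24\tau)=j(\gamma,\tau)^{\frac12}\psi(\gamma')\,g(\tau).
\]
Thus $g$ transforms on $\Gamma_0(144)$ with weight $\tfrac12$ and multiplier $\gamma\mapsto\psi(\gamma')$, and the entire lemma is reduced to the single multiplier identity
\[
\psi(\gamma')=\pfrac{12}d\nu_\theta(\gamma)\qquad\text{for all }\gamma=\pMatrix abcd\in\Gamma_0(144).
\]
As a consistency check, the frequencies $n_\psi=n-\tfrac1{24}$ attached to $f$ are multiplied by $24$ under the scaling, becoming the integers $24n-1$; this matches the fact that $\pfrac{12}{\bullet}\nu_\theta$ is trivial on $\pMatrix1101$, i.e. that $\infty$ is a singular cusp for the target multiplier.

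It remains to verify this multiplier identity, and this is where the real work lies. I would prove it by a direct computation from the explicit formulas: expand $\psi(\gamma')$ via its definition \eqref{eq:psidef} in terms of $\overline{\nu_\eta}(\gamma')$ and the elementary factors $i^{c/48}$ and $\pfrac{-1}d$ (noting that $c/24$ is even, so the even-$c$ branches apply), insert Knopp's formula \eqref{eq:chi-kronecker-symbol} for $\nu_\eta(\gamma')$, and compare against $\nu_\theta(\gamma)=\pfrac cd\ep_d^{-1}$ from \eqref{eq:def-theta-mult} together with the conjugation rule \eqref{eq:thetaconj}. The identity then reduces to a bookkeeping of eighth roots of unity — the $\ep_d$'s, the $\sqrt{-i}$ hidden in $\nu_\eta$, and the power $i^{c/48}$ — combined with Kronecker-symbol reciprocity, organized into cases according to $c/24\bmod4$ and $d\bmod4$. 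Conceptually the outcome is forced by the classical identity $\eta(24\tau)=\sum_{n\geq1}\pfrac{12}nq^{n^2}$, which exhibits the scaled eta function as the weight-$\tfrac12$ theta series attached to $\pfrac{12}{\bullet}\nu_\theta$; this both explains why the theta multiplier appears and serves as a check on the computation. I expect this final root-of-unity and sign bookkeeping to be the only genuinely delicate step, the rest of the argument being formal.
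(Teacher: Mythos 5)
Your proposal follows essentially the same route as the paper: both define $g(\tau)=f(24\tau)=f\sl_\frac12\pmatrix{\sqrt{24}}00{1/\sqrt{24}}$, pass to the conjugated matrix $\gamma'=\pmatrix a{24b}{c/24}d$, and reduce the lemma to the multiplier identity $\psi(\gamma')=\pfrac{12}d\nu_\theta(\gamma)$, which the paper likewise leaves as a case-by-case computation from \eqref{eq:psidef} and \eqref{eq:chi-kronecker-symbol} (using the identities $e\pfrac{1-d}8=\pfrac2d\ep_d$ and $\pfrac{-1}d\ep_d=\ep_d^{-1}$ for odd $d$). Your additional remarks on injectivity, eigenvalue preservation, and square-integrability just fill in what the paper dismisses with ``it is enough to check the transformation law,'' so the two arguments match in substance and in level of detail at the crucial step.
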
 
\begin{proof} It is  enough to check the transformation law.
 Given  $f\in \sLT_\frac12(2, \psi, r)$, define
\[g(\tau):=f(24\tau)=f\big|_\frac12\pmatrix{\sqrt{24}}00{1/\sqrt{24}}.\]
If $\gamma=\pmatrix abcd\in \Gamma_0(144)$ with $c>0$, then we have
\[g\big|_\frac12\gamma=f\big|_\frac12\gamma'\,\pmatrix{\sqrt{24}}00{1/\sqrt{24}},\]
where $\gamma'=\pmatrix a{24b}{c/24}d$.
Then a  case by case computation using \eqref{eq:psidef} and 
\eqref{eq:chi-kronecker-symbol} shows that 
\[\psi(\gamma')=\pmfrac{12}d\nu_\theta(\gamma).\]
The identities $e\pfrac{1-d}8=\pfrac2d\ep_d$ and $\pfrac{-1}d\ep_d=\ep_d^{-1}$ for odd $d$ are useful 
for this computation. 
\end{proof}

\section{Three estimates for the coefficients of Maass forms} \label{sec:ell2}
The proofs of our main results will depend on three different 
average estimates for the Fourier coefficients of Maass forms.

The first is a restatement of a recent result of Andersen and Duke \cite[Theorem 4.1]{andersen-duke}
 (we state this only in the case  of the cusp $\infty$).
 Suppose that $\nu$ is a multiplier of weight $\frac12$ for $\Gamma_0(N)$ and that for $n_\nu\neq 0$  the bound
  \begin{equation}\label{eq:adassump}
  \sum_{\substack{c>0\\c\equiv 0\pmod N}}\frac{|S(n,n,c,\nu)|}{c^{1+\beta}}\ll_\nu |n_\nu|^\ep
  \end{equation}
  holds for some $\beta\in (\frac12, 1)$.

The result of  \cite{andersen-duke} is stated for positive $n$ in weights $\pm\frac12$.  To derive the statement below 
 we use \eqref{eq:n_nu_conj},  \eqref{eq:nuconj}, and  \eqref{eq:acconj}.  Note that the assumption \eqref{eq:adassump} differs slightly from 
that of  \cite{andersen-duke} in that $n_\nu$ appears in place of $n$ on the right side; an examination of the proof 
shows that this is sufficient.  This allows us to access the case when $n=0$ and $n_\nu<0$, which is important in our applications.

Whenever we speak of an orthonormal basis $\{v_j(\tau)\}$ for $\sLT_\frac12(N, \nu)$, we  assume that each $v_j$ is an eigenform of $\Delta_\frac12$ with eigenvalue
$\lambda_j$ and spectral parameter $r_j=\sqrt{\lambda_j-1/4}$.

\begin{proposition}[Andersen-Duke]  \label{AndDu} Suppose that   $\nu$ is a multiplier on $\Gamma_0(N)$ of weight $\frac12$ which satisfies \eqref{eq:adassump}. Let $b_j(n)$ denote the coefficients of an
 orthonormal basis $\{v_j(\tau)\}$ for $\sLT_\frac12(N, \nu)$.
Then we have
\[
|n_\nu|\sum_{x\leq r_j\leq 2x}|b_j(n)|^2e^{-\pi r_j}\ll_{\ep, N}\(x^2+|n|^{\beta+\ep} x^{1-2\beta}\log^\beta x\)\cdot
\begin{cases}
x^{-\frac12}\quad&\text{if $n_\nu>0$,}\\
x^{\frac12}\quad&\text{if $n_\nu <0$.}
\end{cases}
\]
\end{proposition}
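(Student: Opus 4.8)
The plan is to deduce the stated inequality directly from the Andersen--Duke estimate \cite[Theorem 4.1]{andersen-duke}, treating that result as a black box (its proof is the Kuznetsov/spectral large sieve input for positive index in both weights $\pm\frac12$). The only genuine work is to reconcile the two cases in the conclusion and, in particular, to transfer the case $n_\nu<0$ in weight $\frac12$ to a case of \emph{positive} index in weight $-\frac12$. For $n_\nu>0$ I would simply invoke the weight $\frac12$ form of the Andersen--Duke bound verbatim, checking only that the hypothesis supplied is exactly \eqref{eq:adassump} and that the right-hand side $|n_\nu|^\ep$ (rather than $|n|^\ep$) is what their argument actually uses; this yields the factor $x^{-\frac12}$.

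For the case $n_\nu<0$ the idea is to pass to the conjugate form. Given an orthonormal eigenbasis $\{v_j\}$ of $\sLT_\frac12(N,\nu)$ with coefficients $b_j(n)$, the complex conjugates $\{\bar v_j\}$ form, by the conjugation isometry $\sLT_\frac12(N,\nu,r)\leftrightarrow\sLT_{-\frac12}(N,\bar\nu,r)$, an orthonormal eigenbasis of $\sLT_{-\frac12}(N,\bar\nu)$ with the same spectral parameters $r_j$; write $b_{j,c}$ for their coefficients. Since $\alpha_\nu>0$ in our applications, \eqref{eq:acconj} gives $b_{j,c}(m)=\overline{b_j(1-m)}$, so taking $m=1-n$ yields $|b_j(n)|=|b_{j,c}(1-n)|$. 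By \eqref{eq:n_nu_conj} applied at the index $1-n$, the new index satisfies $(1-n)_{\bar\nu}=-n_\nu=|n_\nu|>0$, so the left-hand side of the proposition becomes precisely the Andersen--Duke sum in weight $-\frac12$, multiplier $\bar\nu$, at the positive index $1-n$. The hypothesis \eqref{eq:adassump} for $\bar\nu$ at this index is equivalent, via the conjugation relation \eqref{eq:nuconj} in the form $|S(1-n,1-n,c,\bar\nu)|=|S(n,n,c,\nu)|$, to the assumed bound for $\nu$ at index $n$, and its right-hand side $|(1-n)_{\bar\nu}|^\ep=|n_\nu|^\ep$ is the correct normalization. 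Applying the weight $-\frac12$ estimate then produces the factor $x^{+\frac12}$. The opposite exponents $x^{\mp\frac12}$ in the two cases are exactly the signatures of the two weights: they trace back to the sign of the first index $\tfrac{k\,\sgn(n_\nu)}2$ of the Whittaker function in \eqref{eq:f_fourier} and the resulting normalization of the $b_j(n)$.

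Finally I would record the boundary case $n=0$ (so $n_\nu=-\alpha_\nu<0$), which is the reason for writing $|n_\nu|^\ep$ in \eqref{eq:adassump} and which is needed for the application \eqref{eq:alphafinal}. Here the reduction above uses the positive index $1-n=1$, and the term $|n|^{\beta+\ep}x^{1-2\beta}\log^\beta x$ in the conclusion---which formally vanishes at $n=0$---is in any event dominated by $x^2$, since $\beta>\frac12$ forces $x^{1-2\beta}\le 1$ for $x\ge1$; thus nothing is lost by writing $|n|^{\beta+\ep}$ rather than the index $1$ that the derivation produces.

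I expect the main obstacle to be purely the bookkeeping: applying the three identities \eqref{eq:n_nu_conj}, \eqref{eq:nuconj}, and \eqref{eq:acconj} with the correct roles of $\nu$ versus $\bar\nu$ and of $n$ versus $1-n$, and verifying that the hypothesis transfers with the $|n_\nu|^\ep$ normalization intact across the weight flip. There is no analytic difficulty beyond what is already contained in the Andersen--Duke theorem.
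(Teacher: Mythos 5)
Your derivation follows the same route as the paper, which likewise does not prove this proposition from scratch: it deduces it from Andersen--Duke's Theorem 4.1 (stated for positive index in weights $\pm\tfrac12$) via the conjugation identities \eqref{eq:n_nu_conj}, \eqref{eq:nuconj}, \eqref{eq:acconj}, together with the observation that replacing $|n|^\ep$ by $|n_\nu|^\ep$ in the hypothesis \eqref{eq:adassump} survives an inspection of their proof. Your weight-flip argument for $n_\nu<0$ and your remark that the middle term is dominated by $x^2$ when the index is $0$ or $1$ (since $\beta>\tfrac12$) are exactly the intended content.

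There is, however, one incomplete case. For $n_\nu<0$ you restrict to $\alpha_\nu>0$, on the grounds that this is the situation ``in our applications,'' but that is not accurate: the proposition is applied in the paper (in \eqref{smallrj}, \eqref{bound2} and \eqref{CR1}) to the multiplier $\nu=\pfrac{12}{\bullet}\nu_\theta$ on $\Gamma_0(144)$, for which $\alpha_\nu=0$, at the index $n=-1$. So the branch you skip --- $\alpha_\nu=0$ with $n<0$ --- is precisely the one the paper's main argument needs (the passage from $\rho_j(0)$ for $\psi$ to $b_j(-1)$ happens \emph{before} the proposition is invoked, via \eqref{coeffrel}). The repair is immediate and uses the other branches of the same three identities: when $\alpha_\nu=0$ one has $b_{j,c}(-n)=\overline{b_j(n)}$ by \eqref{eq:acconj}, the index $(-n)_{\bar\nu}=-n>0$ by \eqref{eq:n_nu_conj}, and $|S(-n,-n,c,\bar\nu)|=|S(n,n,c,\nu)|$ by \eqref{eq:nuconj}, so the weight $-\tfrac12$ Andersen--Duke bound at the positive index $-n$ gives the factor $x^{1/2}$ exactly as in your $\alpha_\nu>0$ argument. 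With that branch added, your proof is complete; the configuration $n=0$, $n_\nu<0$ (which forces $\alpha_\nu>0$ and motivates the $|n_\nu|^\ep$ normalization) is the one your write-up already treats correctly.
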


The second estimate is a restatement of \cite[Proposition 8.2]{ahlgren-andersen}.

\begin{proposition}  \label{avgduke}
Let $D$ be an even fundamental discriminant and let $N$ be a positive integer with $D\mid N$.  
Let $b_j(n)$ denote the coefficients of an
 orthonormal basis $\{v_j(\tau)\}$ for $\sLT_\frac12 \(N,\pfrac{|D|}\bullet \nu_{\theta}\)$.
 Then for  square-free $n\neq 0$ and  $x \geq 1$  we have 
\begin{equation*}
|n| \sum_{0\leq r_j \leq x} \frac{|b_j(n)|^2}{\ch\pi r_j} \ll_{\ep,N} |n|^{\frac{3}{7}+\ep} x^{5-\frac{\sgn n}{2}}.
\end{equation*}
\end{proposition}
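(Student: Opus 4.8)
The plan is to deduce the bound from a Kuznetsov trace formula in the diagonal case $m=n$ for the multiplier $\nu:=\pfrac{|D|}\bullet\nu_\theta$ on $\Gamma_0(N)$, following the method Duke used in \cite{duke-half-integral} to bound individual half-integral weight coefficients, but carrying the estimate through \emph{on average} over the spectrum. First I would fix a nonnegative test function $h=h_x$ on the spectral side chosen so that $h_x(r)\gg 1/\cosh(\pi r)$ throughout $0\le r\le x$ while remaining tame (for instance a Gaussian in $r/x$ divided by $\cosh\pi r$); its precise shape matters only insofar as it controls the Bessel-type transform $\widetilde h_x$ appearing on the arithmetic side. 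With such an $h$, the cuspidal part of the spectral side dominates $\sum_{0\le r_j\le x}|b_j(n)|^2/\cosh(\pi r_j)$ up to the finitely many exceptional eigenvalues (which are harmless), and since the continuous spectrum (should the twisted multiplier admit singular cusps) contributes nonnegatively, it may simply be discarded for an upper bound.

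The geometric side then splits as a diagonal term plus a sum of Kloosterman sums. The diagonal term, coming from $\delta_{m=n}$, produces the main contribution and is directly estimable from $\int h_x$. The heart of the matter is the off-diagonal sum $\sum_{c\equiv 0\,(N)} c^{-1}S(n,n,c,\nu)\,\widetilde h_x(\cdots)$. Here I would exploit that for the theta multiplier these are Salié-type sums: after separating the Gauss-sum factor and the quadratic twist $\pfrac{|D|}\bullet$, each $S(n,n,c,\nu)$ reduces essentially to an exponential sum over the roots $v$ of a quadratic congruence $v^2\equiv \cdots n\pmod c$. This explicit evaluation is exactly what allows one to beat the Weil bound.

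To extract the exponent $\tfrac37$ I would feed these root-of-congruence sums into an equidistribution estimate for solutions of quadratic congruences to varying moduli, of the type proved by Duke, Friedlander and Iwaniec \cite{duke-fried-iwan-cong} and exploited by Iwaniec \cite{iwaniec-fourier-coefficients} for half-integral weight coefficients. Averaging over $c$ then yields a power saving in the $n$-aspect beyond the trivial $|n|^{1/2}$, which after balancing against the contribution of the short initial range of $c$ (where only the Weil bound is available) produces the $|n|^{\frac37+\ep}$ appearing in the statement. The various powers of $x$ are tracked through the size and effective support of $\widetilde h_x$, and the asymmetry encoded in the exponent $5-\tfrac{\sgn n}2$ reflects the different asymptotic sizes of the relevant Bessel/Whittaker transforms in the cases $n_\nu>0$ and $n_\nu<0$.

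The step I expect to be the main obstacle is obtaining all of this uniformly in both $n$ and the spectral cutoff $x$. One must split the $c$-summation into ranges, apply the Salié-sum equidistribution precisely in the range where it wins, fall back on Weil and trivial bounds elsewhere, and optimize, all while keeping explicit control of how $\widetilde h_x$ decays as $c$ grows. Managing this balance, together with the $\sgn n$ dichotomy in the transform, so that no single range dominates is where the real work lies; the spectral and Salié-evaluation inputs are comparatively standard once the test function has been well chosen.
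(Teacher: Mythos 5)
Your proposal takes a genuinely different route from the paper, and as a standalone argument it contains real gaps. The paper's own proof is essentially a two-line deduction: for $n>0$ the statement is a restatement of \cite[Proposition 8.2]{ahlgren-andersen}, and for $n<0$ it uses \eqref{eq:L-k-norm} to show that $v_j\mapsto \(r_j^2+\frac1{16}\)^{-\frac12}\bar{L_{\frac12}v_j}$ is an isometry onto a subspace of $\sLT_\frac32\(N,\pfrac{|D|}{\bullet}\bar{\nu_\theta}\)$ which, by \eqref{eq:acconj} and \eqref{eq:Lk_coeff}, converts coefficients at negative indices into coefficients at positive indices; applying the cited result in weight $\frac32$ and using partial summation (the source of the extra power of $x$ when $n<0$) finishes the proof. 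You instead attempt to reprove the cited input from scratch by Duke's method, so the comparison below is really with the proof of that black-boxed ingredient (and with the paper's Theorem~\ref{trade}, which follows the same strategy).

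Two steps in your sketch would fail as written. First, in half-integral weight you cannot simply prescribe a nonnegative test function $h$ on the Maass spectrum and then discard everything you do not want: the same-sign Kuznetsov formula for these multipliers (Proposition~\ref{DFI}, from \cite{duke-fried-iwan2017}) carries a third spectral term $\mathcal{N}$, an infinite sum over holomorphic cusp forms of every weight $\ell\equiv\frac12\pmod 2$, whose weights $\check{\Phi}(\ell)$ are tied by Sears--Titchmarsh inversion to the same test function and have no fixed sign, so this term can be neither avoided nor dropped by positivity. Bounding $\mathcal{N}$ uniformly in $\ell$ is precisely the hardest part of any argument of this type (it occupies most of Section 5 of the paper); your sketch never confronts it.

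Second, the mechanism you invoke for the exponent $\frac37$ is not the one that works. The equidistribution theorem of \cite{duke-fried-iwan-cong} concerns roots of $v^2\equiv D\pmod p$ for a \emph{fixed} discriminant $D$ and \emph{prime} moduli $p$, and in this paper it is used only for Theorem~\ref{thm:abs_conv_not}; in the diagonal Sali\'e evaluation relevant here the congruence is $v^2\equiv n^2\pmod c$ with $n$ growing and with $c$ running over all multiples of $N$, so that theorem does not apply. What actually produces the saving beyond Weil --- in Iwaniec's work \cite{iwaniec-fourier-coefficients}, in \cite{ahlgren-andersen}, and in the proof of Theorem~\ref{trade} --- is an average over auxiliary prime levels $Q=pN$ with $p\asymp P=n^{1/7}$ (Proposition~\ref{prop:Iwanbound}), combined with the positivity argument that passes from level $N$ to this level average; no level-averaging appears anywhere in your outline, and plain ``averaging over $c$'' is not known to suffice. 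Finally, the case $n<0$ is not merely a matter of ``different asymptotic sizes of the transforms'': the same-sign formula with both indices negative requires passing to weight $-\frac12$ with the conjugate multiplier, or, as the paper does, transferring to weight $\frac32$ via the lowering operator, and it is the factor $\(r_j^2+\frac1{16}\)^{\frac12}$ introduced by that transfer, handled by partial summation, that accounts for the asymmetric exponent $5-\frac{\sgn n}{2}$.
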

This follows directly from \cite[Proposition 8.2]{ahlgren-andersen} for $n>0$. 
If  $n<0$, then using  \eqref{eq:L-k-norm},
we see that the map
\[v_j\mapsto \(r_j^2+\mfrac1{16}\)^{-\frac12}\bar{L_\frac12v_j}
\]
gives an isometry between the subspaces of $\sLT_\frac12 \(N,\pfrac{|D|}\bullet \nu_{\theta}\)$
and $\sLT_\frac32 \(N,\pfrac{|D|}\bullet \bar\nu_{\theta}\)$
spanned by those forms with spectral parameter not equal to $i/4$. Moreover, if we  denote the coefficients of $\(r_j^2+\frac1{16}\)^{-\frac12}\bar{L_\frac12v_j}$ by $a_j(n)$, then \eqref{eq:acconj} and  \eqref{eq:Lk_coeff} give
\[a_j(-n)=\(r_j^2+\frac1{16}\)^{-\frac12}\,\bar{b_j(n)}\qquad\text{for $n<0$.}
\]
The statement follows by applying the result of \cite{ahlgren-andersen} to the forms in weight $\frac32$
and using partial summation.

Finally, we will prove an estimate which is slightly weaker in $n$ aspect than Proposition~\ref{avgduke} but is significantly better in spectral parameter aspect.  

\begin{theorem} \label{trade}
Let $D$ be an even fundamental discriminant and let $N$ be a positive integer with $D\mid N$.  
Let $b_j(n)$ denote the coefficients of an
 orthonormal basis $\{v_j(\tau)\}$ for $\sLT_\frac12 \(N,\pfrac{|D|}\bullet \nu_{\theta}\)$.
Then for  square-free $n\neq 0$ and  $x \geq 1$  we have 
\begin{equation*}
|n| \sum_{ |r_j| \leq x} \frac{|b_j(n)|^2}{\ch \pi r_j} \ll_{\ep,N} |n|^{\frac{131}{294}+\ep} x^{3-\frac{\sgn n}2}.
\end{equation*}
\end{theorem}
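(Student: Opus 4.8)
The target is a "second-moment" bound for weight-$\tfrac12$ Maass form coefficients on $\Gamma_0(N)$ with multiplier $\pfrac{|D|}{\bullet}\nu_\theta$, summed over the spectral parameter up to $x$. The natural engine is the Kuznetsov trace formula, which for a suitable test function $h$ relates
\[
\sum_j \frac{h(r_j)}{\ch\pi r_j}\,|n|\,|b_j(n)|^2
\]
(plus an Eisenstein contribution, absent here since the relevant cusps are non-singular for this multiplier) to a diagonal term plus a sum of Kloosterman sums $S(n,n,c,\nu)$ weighted by an integral transform $\hat h$ of the test function evaluated at $4\pi|n|/c$. The strategy, following Duke's treatment in \cite{duke-half-integral} of the weight-$\tfrac12$ average, is to choose $h$ so that $h(r)\gg 1$ on $|r|\le x$ (so the spectral side dominates the quantity we want to bound) while keeping good control of $\hat h$. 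The present theorem differs from Proposition~\ref{avgduke} precisely in the choice of test function: Duke's original choice gives the exponent in Proposition~\ref{avgduke}, but a \emph{modified} test function—concentrating more sharply in $r$—trades a small loss in the $n$-aspect exponent (from $\tfrac37$ to $\tfrac{131}{294}$) for a large gain in the $x$-aspect (from $x^{5-\sgn n/2}$ down to $x^{3-\sgn n/2}$).

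**Key steps, in order.**

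First I would invoke the Kuznetsov formula for the half-integral weight multiplier in the form recorded by Duke, Friedlander and Iwaniec \cite{duke-fried-iwan2017}, and positivity of $1/\ch\pi r_j$ together with $h\gg 1$ on the target range to reduce the theorem to bounding the geometric (Kloosterman) side. Second, I would estimate the diagonal term, which contributes the main term of size $\asymp x^{3-\sgn n/2}$ (the power of $x$ comes from the Plancherel-type normalization of $h$, and the $\sgn n$ dependence from the distinct Whittaker behaviour in the two sign cases, exactly as in the passage deriving Proposition~\ref{avgduke} via $L_{1/2}$). Third—and this is the crux—I would bound the off-diagonal sum $\sum_{c\equiv 0\,(N)} S(n,n,c,\nu)\,\hat h(4\pi|n|/c)$. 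Here one opens the Kloosterman sum and applies Iwaniec's level-aspect averages for sums of Kloosterman sums \cite{iwaniec-fourier-coefficients}; the test-function modification is designed so that the transform $\hat h$ localizes $c$ in a range that makes these averages effective, and so that the resulting bound is $|n|^{131/294+\ep}x^{3-\sgn n/2}$. The final step is to handle the auxiliary holomorphic contribution: the Kuznetsov formula in this setting produces an infinite sum over weights $\ell$ ranging over spaces of holomorphic cusp forms, and each summand must be bounded \emph{uniformly in $\ell$} so that the $\ell$-sum converges and is absorbed into the stated bound.

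**The main obstacle.**

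The hardest part will be the uniform-in-$\ell$ control of the holomorphic term together with the fine optimization of the modified test function. The delicacy is that $h$ must simultaneously (i) dominate $1$ throughout $|r|\le x$, (ii) have a transform $\hat h$ whose decay localizes $c$ sharply enough that Iwaniec's level-aspect estimates yield a genuine saving in $x$, and (iii) not blow up the $\ell$-summation over holomorphic weights. Balancing these competing demands is what produces the precise exponent $\tfrac{131}{294}$, and verifying that the $\ell$-sum converges uniformly—controlling the decay of the Bessel/Whittaker transforms as $\ell\to\infty$—is where the bulk of the technical work lies. Everything else (the diagonal main term, the passage between $n>0$ and $n<0$ via the lowering operator $L_{1/2}$ and \eqref{eq:Lk_coeff}) is essentially bookkeeping of the kind already carried out in the derivation of Proposition~\ref{avgduke}.
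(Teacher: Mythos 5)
Your proposal assembles the right ingredients---the Duke--Friedlander--Iwaniec form of the Kuznetsov formula, positivity, Iwaniec's level-averaged Kloosterman bounds, and uniform-in-$\ell$ control of the holomorphic term---but it omits the one idea that makes these ingredients fit together, and as written the plan fails at its crux. You propose to bound the single-level off-diagonal sum $\sum_{c\equiv 0 \pmod N} S(n,n,c,\nu)\,\hat h(4\pi|n|/c)$ by ``applying Iwaniec's level-aspect averages.'' But those averages (Proposition~\ref{prop:Iwanbound} here, after Iwaniec and Waibel) bound sums of Kloosterman sums summed over an auxiliary family of levels $Q=pN$ with $P<p\le 2P$ prime; they say nothing about a fixed level $N$, where the only available tool is the Weil bound \eqref{eq:weilbound}, and Weil alone gives essentially $n^{\frac12+\ep}$ in the $n$-aspect, not $n^{\frac{131}{294}+\ep}$. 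The paper's actual mechanism for manufacturing a level average is the following. With the fixed geometric-side test function $\Phi(u)=\frac18\sqrt{\pi/2}\,u^{-1/2}J_{9/2}(u)$ one checks $\hat\Phi>0$ on the entire spectrum, including the exceptional segment $i(0,1/4]$ (where for $k=-\frac12$ the pole of $D_k$ at $t=i/4$ must be excluded by showing $\lambda=\frac3{16}$ does not occur); positivity of both the discrete term $\mathcal{L}^{(Q)}$ and the Eisenstein term $\mathcal{M}^{(Q)}$ then yields \eqref{eq:keyineq0}, and---this is the key step---the level-$N$ eigenfunctions, rescaled by $[\Gamma_0(N):\Gamma_0(Q)]^{-\frac12}$, form an orthonormal subset of each $\sLT_k(Q,\nu)$, whence $\mathcal{L}^{(Q)}_{\hat\Phi}(n,n)\gg \mathcal{L}^{(N)}_{\hat\Phi}(n,n)/P$. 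Summing over the $\asymp P/\log P$ levels $Q\in\sQ$ produces the averaged inequality \eqref{keyineq2}, and only at that point (with $P=n^{1/7}$) do the Iwaniec--Waibel bounds apply to the right-hand side. Without this positivity-and-embedding step your plan has no level average to exploit, and the claimed exponent is out of reach.

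Two secondary misconceptions are worth correcting. First, the Eisenstein contribution is not ``absent because the relevant cusps are non-singular'': for $\nu=\pfrac{|D|}{\bullet}\nu_\theta$ the cusp $\infty$ is singular (indeed $\alpha_\nu=0$ is a hypothesis of Proposition~\ref{DFI}), and the continuous spectrum is discarded by positivity, not by absence; it is the multiplier $\psi$ on $\Gamma_0(2)$ of Section~\ref{Kuznetsov1}, not the theta-type multiplier here, that has no singular cusps. Second, the trace formula available in this setting goes from the geometric side to the spectral side, so the test function is a fixed function of the Kloosterman argument and there is no diagonal term: the factor $x^{3-\frac{\sgn n}2}$ in Theorem~\ref{trade} does not arise from a Plancherel normalization of a spectral test function $h$, but from the final division by the polynomial lower bound $\hat\Phi(r_j)\gg r_j^{k-3}$ of \eqref{eq:phistirling}. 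That polynomial (rather than faster) decay of $\hat\Phi$ is precisely what the modified choice of $\Phi$ buys over the test function behind Proposition~\ref{avgduke}, whose transform decays too quickly in the spectral parameter and produces $x^{5-\frac{\sgn n}2}$; the passage from $n>0$ to $n<0$ via \eqref{eq:acconj} is, as you say, bookkeeping.
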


The proof of this result is somewhat involved, and occupies the next section. We follow the basic strategy of Duke \cite{duke-half-integral},
which in turn relies on bounds of Iwaniec \cite{iwaniec-fourier-coefficients} for sums of Kloosterman sums averaged over the level.
We use a   recent version of the Kusnetsov trace formula 
due to Duke, Friedlander and Iwaniec \cite{duke-fried-iwan2017} which allows us to use test functions which lead to a significant savings 
with respect to the spectral parameter. The most delicate part of the subsequent analysis involves a sum over 
holomorphic cusp forms of arbitrarily large weight $\ell$ weighted by $J$-Bessel transforms, and much of the difficulty arises 
in obtaining bounds which are uniform with respect to $\ell$. 
\subsection{Remarks}
For individual coefficients, this theorem implies that 
\begin{equation}\label{eq:individual}
b_j(n)\ll_{N, \ep} \lambda_j^{\frac34-\frac {\sgn n} 8}\ch \pmfrac{\pi r_j}2 |n|^{-\frac{163}{588}+\ep}.
\end{equation}
This is slightly weaker in $n$-aspect and significantly stronger in $\lambda$-aspect than the well-known result of Duke \cite[Theorem~5]{duke-half-integral}.
Baruch and Mao \cite[Theorem 1.5]{baruch-mao} obtained a bound which is stronger in $n$ than Duke's bound, but weaker in the spectral parameter.
The approach of Baruch and Mao relies on subconvexity bounds for $L$-functions due to Blomer, Harcos, and Michel \cite{blomer-harcos-michel}
and a Kohnen-Zagier type formula which relates special values of these $L$-functions to the coefficients $b_j(n)$.  The subconvexity bounds
were later improved by Blomer and Harcos \cite{blomer-harcos}.  Following the approach of Baruch and Mao using these improved bounds should lead
to a bound for the individual coefficients in which the exponent $-\frac{163}{588}=-.277\dots$ above is replaced by $-\frac{5}{16}=-.3125$, with the same power of $\lambda_j$.
An average version of this bound in the style of Theorem~\ref{trade} would improve the exponents in our applications.

A very strong average version is available in special cases due to work of Young \cite{young}. 
Young obtains estimates of the form
\[\sum_{T\leq r_j\leq T+1}L(u_j\times \chi_q, \tfrac12)^3\ll_\ep(q(T+1))^{1+\ep},\]
where $q$ is odd and square-free, $\chi_q$ is a real character of conductor $q$, 
and $\{u_j\}$ is an orthonormal basis of weight zero Maass cusp forms in level dividing $q$
(note that there are $\asymp T$ terms in the sum). 
Unfortunately this does not apply to our situation.
Andersen and Duke \cite{andersen-duke} use Young's result as an input to obtain strong bounds for sums of Kloosterman sums arising in the plus-space in level four
(in which case the forms $u_j$ have level one), and obtain striking applications.

Finally, we mention that the Lindel\"of hypothesis $L(u_j\times \chi_q,\tfrac12)\ll_\ep((1+|r_j|)q)^\ep$ would lead to the pointwise bound
\begin{equation*}
b_j(n)\ll_{N, \ep} \lambda_j^{-\frac {\sgn n} 8}\ch \pmfrac{\pi r_j}2 |n|^{-\frac12+\ep}.
\end{equation*}
This would of course lead to substantial improvements in the error bounds discussed in this paper, as described in the Introduction.

\section{Proof of Theorem~\ref{trade}} \label{sec:main}

\subsection{The Kusnetsov formula}
We follow the exposition of Duke, Friedlander and Iwaniec \cite[\S 2]{duke-fried-iwan2017}.
Assume that $\nu$ is a multiplier on $\Gamma_0(N)$, and that 
\[\alpha_\nu=0.\]
Let $\Phi:[0,\infty) \rightarrow \R$ be a smooth  function such that 
\begin{equation} \label{smoothcond}
\Phi(0)=\Phi^{\prime}(0)=0
\end{equation}
and let $J_s$ denote the $J$-Bessel function.
 For $s \in \mathbb{C}$, define
\begin{equation*}
\tilde{\Phi}(s)=\int_{0}^{\infty} J_s(u) \Phi(u)\,  \frac{du}u,
\end{equation*}
and suppose  that for some $\ep>0$ we have
\begin{align}
\tilde{\Phi}(2it)  &\ll t^{-2-\ep} \ch \pi t \qquad \text{for  $ t \geq 1$}, \label{DFI1}\\
\tilde{\Phi}(\ell) &\ll \ell^{-2-\ep} \qquad\quad \, \ \ \ \text{for $\ell\geq 1$}.\label{DFI2}
\end{align}
Also define
\begin{equation*}
\hat{\Phi}(t)=i \( \tilde{\Phi}(2it) \cos  \pi \(\mfrac k2+it \) -\tilde{\Phi}(-2it) \cos  \pi \(\mfrac k2-it \)  \) \frac{D_k(t)}{\sh \pi t},
\end{equation*}
where 
\begin{equation*}
D_k(t)=\frac{1}{2 \pi^2} \Gamma \( \mfrac{1+k}2+it \) \Gamma \(\mfrac{1+k}2-it  \).
\end{equation*}

For $m, n\geq 1$  define
\begin{equation}\label{eq:kphidef}
\sK^{(N)}_{\Phi}(m,n):=\sum_{\substack {c>0 \\ c \equiv 0 \pmod{N}}} c^{-1} S(m,n,c,\nu) \Phi  \pfrac{4 \pi \sqrt{mn}}{c} .
\end{equation}
The Kusnetsov formula expresses $\sK^{(N)}_{\Phi}(m,n)$ as the sum of three spectral terms.
The first two of these correspond to the discrete and the continuous spectrums.
Let $\{u_j(\tau)\}$ be an orthonormal basis  for $\sLT_k(N, \nu)$,
and denote the coefficients   by  $\rho_j(n)$ and the  spectral parameters  by $r_j$.
For each singular cusp $\mathfrak a$, let $\rho_{\mathfrak{a}}(n,t)$ be the Fourier coefficients for the Eisenstein series attached  to  $\mathfrak{a}$.  Define 
\begin{align}
\mathcal{L}^{(N)}_{\hat{\Phi}}(m,n)&:=4 \pi \sqrt{mn} \sum_{j \geq 0}  \overline{\rho_j(m)} \rho_j(n) \frac{\hat{\Phi}(r_j)}{\ch \pi r_j}, \label{eq:calLdef} \\
\mathcal{M}^{(N)}_{\hat{\Phi}}(m,n)&:=4 \pi \sqrt{mn} \sum_{\mathfrak{a}} \frac{1}{4 \pi} \int_{-\infty}^{\infty} \overline{\rho_{\mathfrak{a}}(m,t)} \rho_{\mathfrak{a}}(n,t) \frac{\hat{\Phi}(t)}{\ch \pi t} dt.\label{eq:calMdef}
\end{align}

The third term involves the holomorphic forms. For $\ell\equiv k\pmod 2$
 with $\ell\geq 2$,  let $S_\ell(N,\nu)$ denote the space of holomorphic cusp
 forms of weight $\ell$, level $N$ and multiplier $\nu$. Let $\mathcal{B}_\ell$ denote an orthonormal basis of this space with respect to the Petersson inner product 
\begin{equation*}
\langle f,g \rangle =\int_{\Gamma_0(N) \setminus \mathbb{H}} y^\ell f(\tau) \bar{g(\tau)}\, d \mu.
\end{equation*}
Every $f \in \mathcal{B}_\ell$ has a Fourier expansion 
\begin{equation}\label{eq:cuspfourier}
f(\tau)=\sum_{n=1}^{\infty} a_f(n) (4 \pi n)^\frac{\ell-1}2 e(n \tau).
\end{equation}
The third spectral term is  
\begin{equation*}
\mathcal{N}^{(N)}_{\check{\Phi}}(m,n)=\sum_{\substack{\ell \equiv k \!\!\!\! \pmod{2} \\ \ell \geq 2}} \check{\Phi}(\ell) \Gamma(\ell) 
\sum_{f \in \mathcal{B}_\ell} \bar{a_f(m)} a_f(n),
\end{equation*}
where 
\begin{equation}\label{eq:phicheckdef}
\check{\Phi}(\ell):=\pi^{-1} \tilde{\Phi}(\ell-1) e^{\frac{\pi i(\ell-k)}{2}}.
\end{equation}

Define 
\begin{equation*}
\gamma_k:=e^{-\frac{\pi i k }{2}}.
\end{equation*}
Duke, Friedlander, and Iwaniec \cite[Theorem~2.5, (2.28), (2.29)]{duke-fried-iwan2017} 
proved the following.
\begin{proposition}[Duke--Friedlander--Iwaniec] \label{DFI}
Suppose that $\nu$ is a multiplier of weight 
$k$ on $\Gamma_0(N)$ with $\alpha_\nu=0$.
Let  $\Phi$ be a smooth function which satisfies \eqref{smoothcond}, \eqref{DFI1} and \eqref{DFI2}. Then for $m,n \geq 1$  we have 
\begin{equation*}
\gamma_k \sK^{(N)}_{\Phi}(m,n)=\mathcal{L}^{(N)}_{\hat{\Phi}}(m,n)+\mathcal{M}^{(N)}_{\hat{\Phi}}(m,n)+\mathcal{N}^{(N)}_{\check{\Phi}}(m,n).
\end{equation*}
\end{proposition}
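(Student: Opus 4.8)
This is the Kuznetsov trace formula in weight $k$ for a singular multiplier, so the plan is to realize both sides as two evaluations of a single Fourier coefficient of a Poincaré series. Since $\alpha_\nu=0$ the frequencies satisfy $m_\nu=m$ and $n_\nu=n$, so $\Gamma_\infty$ acts compatibly on the additive characters $e(mx)$. For a weight function $\phi$ decaying suitably at $0$ and $\infty$ I would set
\[
P_m(\tau):=\sum_{\gamma\in\Gamma_\infty\backslash\Gamma_0(N)}\bar\nu(\gamma)\,j(\gamma,\tau)^{-k}\,\phi(\im\gamma\tau)\,e\(m\,\re\gamma\tau\),
\]
which lies in $\sL_k(N,\nu)$ by the multiplier axioms and the cocycle relation for $j$. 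The first step is to compute its Fourier expansion at $\infty$ by unfolding along the Bruhat cells of $\Gamma_0(N)$: the cell $c=0$ contributes a diagonal $\delta_{m,n}$ term, while each cell with lower-left entry $c>0$ contributes the Kloosterman sum $S(m,n,c,\nu)$ times an oscillatory $x$-integral of $\phi$ that, after a change of variables, becomes a Hankel-type kernel in $y$. Choosing $\phi$ to be the inverse Bessel transform of $\Phi$ --- so that $\widetilde\Phi(s)=\int_0^\infty J_s(u)\Phi(u)\,du/u$ --- makes the $c>0$ part reproduce $\Phi(4\pi\sqrt{mn}/c)$, and hence a constant multiple of $\gamma_k\,\sK^{(N)}_\Phi(m,n)$; the phase $\gamma_k=e^{-\pi i k/2}$ emerges from the arguments of $j(\gamma,\tau)$, and the vanishing conditions \eqref{smoothcond} keep the diagonal and the relevant integrals under control.

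The second step is to expand $P_m$ against the spectral decomposition of $\Delta_k$ on $\sL_k(N,\nu)$, whose completeness is furnished by the discrete Maass spectrum $\{u_j\}$ together with the Eisenstein series attached to the singular cusps. Each projection $\langle P_m,u_j\rangle$ is evaluated by unfolding $P_m$ against $u_j$ and equals $\overline{\rho_j(m)}$ times the Whittaker integral against $\phi$, namely $\int_0^\infty \phi(y)\,\overline{W_{k\sgn(m)/2,\,ir_j}(4\pi|m|y)}\,y^{-2}\,dy$; reading off the $n$-th Fourier coefficient of the resulting expansion via \eqref{eq:f_fourier} produces exactly the shapes of $\mathcal L^{(N)}_{\hat\Phi}(m,n)$ and $\mathcal M^{(N)}_{\hat\Phi}(m,n)$ in \eqref{eq:calLdef} and \eqref{eq:calMdef}. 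The analytic content is to show that this Whittaker integral, combined with its conjugate counterpart, equals the prescribed transform $\hat\Phi(r_j)$; this is a Mellin--Barnes / contour-shift computation from the integral representation of $W_{\kappa,\mu}$, and it is precisely here that the factor $D_k(t)/\sh\pi t$ and the two cosine terms in the definition of $\hat\Phi$ are generated.

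The genuinely new feature in half-integral weight, and the step I expect to be the main obstacle, is the holomorphic contribution $\mathcal N^{(N)}_{\check\Phi}(m,n)$. It arises because inverting the Bessel transform $\widetilde\Phi(s)$ to recover $\Phi$ requires not only the continuous parameter $s=2it$ supplied by the Maass and Eisenstein spectrum, but also the discrete nodes $s=\ell$ with $\ell\equiv k\pmod2$ and $\ell\ge2$; these integer or half-integer values correspond precisely to the holomorphic cusp forms of weight $\ell$. Thus the spectral expansion of $P_m$ must be supplemented by its holomorphic discrete-series part, whose contribution is computed by the Petersson formula for $S_\ell(N,\nu)$ and packaged through $\check\Phi(\ell)=\pi^{-1}\widetilde\Phi(\ell-1)e^{\pi i(\ell-k)/2}$ as in \eqref{eq:phicheckdef}. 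The decay hypotheses \eqref{DFI1} and \eqref{DFI2} are exactly what guarantee absolute convergence of the three spectral series and legitimize every interchange of summation and integration; the identity then follows by equating the two expressions for the $n$-th Fourier coefficient of $P_m$ and invoking the uniqueness of Fourier expansions.
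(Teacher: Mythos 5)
The paper does not actually prove this proposition; it is imported verbatim from Duke--Friedlander--Iwaniec \cite[Theorem~2.5, (2.28), (2.29)]{duke-fried-iwan2017}, so your proposal must be measured against the proof in that source. Your overall architecture --- Poincar\'e series evaluated geometrically via the Bruhat cells and spectrally via the $L^2$ decomposition, with the holomorphic term forced by the discrete nodes of the Bessel inversion and packaged through Petersson's formula --- is indeed the architecture of the DFI proof (and of every proof of a same-sign Kuznetsov formula). Your diagnosis of where $\mathcal{N}^{(N)}_{\check{\Phi}}$ comes from is also essentially right: it is the Sears--Titchmarsh phenomenon, in which representing a general $\Phi$ in the $J$-Bessel system requires the discrete parameters $s=\ell$ in addition to the continuous ones $s=2it$, and the resulting sums of Kloosterman sums weighted by $J_{\ell-1}$ are converted into coefficient sums over $S_\ell(N,\nu)$ by Petersson's formula.

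There is, however, a genuine gap at the pivot of your argument. The $n$-th Fourier coefficient of $P_m$ is a \emph{function of} $y$, and its Kloosterman part retains $y$-dependence for every choice of $\phi$: the cell with lower-left entry $c$ contributes $c^{-1}S(m,n,c,\nu)$ times an oscillatory $x$-integral of $\phi(\im\gamma\tau)e(m\re\gamma\tau)$, which no choice of $\phi$ collapses to the constant $\Phi(4\pi\sqrt{mn}/c)$. So the claim that picking $\phi$ as the inverse Bessel transform of $\Phi$ makes the geometric side equal a multiple of $\gamma_k\mathcal{K}^{(N)}_{\Phi}(m,n)$ is false as stated, and your closing step --- equating two expressions for the coefficient and invoking uniqueness of Fourier expansions --- compares two functions of $y$, not the numbers appearing in the proposition. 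The missing ingredient is an integration over $y$: one pairs \emph{two} Poincar\'e series and computes $\langle P_m(\cdot\,|\,\phi_1),P_n(\cdot\,|\,\phi_2)\rangle$ both geometrically and spectrally (this is what DFI do). Only then does the geometric side carry a double integral that the inversion theory turns into $\Phi(4\pi\sqrt{mn}/c)$, while the spectral side produces $\overline{\rho_j(m)}\rho_j(n)$ weighted by the product of two Whittaker transforms, which is what actually generates $\hat\Phi(r_j)$ with its factor $D_k(t)/\sh\pi t$ and the phase $\gamma_k$. Relatedly, your sentence ``the spectral expansion of $P_m$ must be supplemented by its holomorphic discrete-series part'' is misleading: the weight-$k$ $L^2$ decomposition is already complete, and holomorphic forms of weight $\ell>k$ contribute no weight-$k$ vectors at all; the term $\mathcal{N}^{(N)}_{\check{\Phi}}$ enters exactly where you say earlier in the same paragraph --- from the discrete part of the Bessel inversion applied to the geometric side --- not as a missing piece of the spectrum.
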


\subsection{Start of the proof of Theorem~\ref{trade}}\label{sec:start}
We now suppose that $D$ is an even fundamental discriminant, that $N\equiv0\pmod 8$ is a positive integer with $D\mid N$, and that 
\begin{equation}\label{eq:weightchar}
(k, \nu)=\(\mfrac12, \pmfrac{|D|}\bullet \nu_{\theta}\)\qquad \text{or}
 \qquad(k, \nu)=\(-\mfrac12, \pmfrac{|D|}\bullet \bar{\nu_{\theta}}\)=\(-\mfrac12, \pmfrac{-|D|}\bullet \nu_{\theta}\).
\end{equation}
For such a multiplier $\nu$, we have the Weil bound   \cite[Lemma 4]{waibel}
\begin{equation}\label{eq:weilbound}
\left|S(n, n, c, \nu)\right|\leq\tau(c) (n, c)^\frac12 c^{\frac12}.
\end{equation}
Suppose that  $\mu\in\{-1, 0, 1\}$.  For  $n\in \N$ and $x \geq 1$ we   define 
\begin{equation}\label{eq:kloospm1}
 K _{\mu}^{(N)}(n;x):=\sum_{\substack{\\ c \leq x \\ c \equiv 0 \pmod{N}  }} c^{-\frac{1}{2}} S(n,n,c,\nu) e \( \frac{2\mu n}{c} \).
\end{equation}

Let $P$ be a positive parameter (which will eventually be set to $n^\frac17$), and define
\begin{equation} \label{eq:Qdef}
\sQ=\sQ(n, N, P):=\left\{pN:  p\  \text{prime,}\ \  P<p \leq 2P, \text{ and}\ \  p \nmid 2nN  \right\}.
\end{equation}
For the theta-multiplier, Iwaniec \cite[Theorem~3]{iwaniec-fourier-coefficients} obtained a bound for the sums $K _{\mu}^{(N)}(n;x)$ averaged over the level.  
This was extended to the case
of twists by a quadratic character by Waibel~\cite[\S 3]{waibel} (this is also implicit in the work of Duke~\cite{duke-half-integral}). Combining these results, we have
\begin{proposition}\label{prop:Iwanbound}
Suppose that $N \equiv 0 \pmod{8}$, that $\mu\in\{-1, 0, 1\}$, that $n>0$ is square-free, and that $\sQ$ is as in \eqref{eq:Qdef}. Suppose that $\nu$ is one of the characters appearing in \eqref{eq:weightchar}. Then we have 
\begin{equation} \label{iwaniec-fourier-coefficients}
\sum_{Q \in \sQ} \left|  K_\mu^{(Q)}(n;x) \right |  \ll_{N, \ep}  \left\{xP^{-\frac{1}{2}}+x n^{-\frac{1}{2}}+(x+n)^{\frac{5}{8}} \( x^{\frac{1}{4}} P^{\frac{3}{8}}+n^{\frac{1}{8}} x^{\frac{1}{8}} P^{\frac{1}{4}} \)\right\} (nx)^{\ep}.
\end{equation}
\end{proposition}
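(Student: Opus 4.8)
The plan is to derive \eqref{iwaniec-fourier-coefficients} from Iwaniec's estimate \cite{iwaniec-fourier-coefficients} for the untwisted theta-multiplier together with Waibel's extension \cite[\S 3]{waibel} to quadratic twists; accordingly, the first point to record is the mechanism that makes a level-average bound of this strength possible. For each $\nu$ appearing in \eqref{eq:weightchar} the sum $S(n,n,c,\nu)$ is a Sali\'e-type sum attached to the theta-multiplier twisted by the quadratic character $\pfrac{\pm|D|}\bullet$, and the crucial feature is its \emph{linearization}: up to a unimodular factor and the factor $c^{1/2}$, the sum $S(n,n,c,\nu)$ equals a sum of additive characters $e\pfrac{2x}{c}$ taken over the solutions $x\pmod c$ of the quadratic congruence $x^2\equiv n^2\pmod c$, each solution carrying a value of the twisting character. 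The factor $c^{-1/2}$ in \eqref{eq:kloospm1} cancels the $c^{1/2}$ exactly, so that after substitution $K_\mu^{(Q)}(n;x)$ becomes, for each $Q=pN\in\sQ$, a sum over $c\le x$ with $Q\mid c$ of additive characters $e\pfrac{2x+2\mu n}{c}$ evaluated at the roots of $x^2\equiv n^2\pmod c$.

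Following Iwaniec, the second step is to exploit the averaging over the level. Choosing unimodular $\xi_Q$ so that $\sum_{Q\in\sQ}|K_\mu^{(Q)}(n;x)|=\sum_{Q\in\sQ}\xi_Q K_\mu^{(Q)}(n;x)$, one organizes the resulting expression as a bilinear form whose two variables are the prime $p$ producing the level $Q=pN$ and the complementary modulus $c/Q$. The roots of $x^2\equiv n^2\pmod c$ are parametrized through the Chinese Remainder Theorem, and the hypothesis that $n$ is square-free keeps their number under control and makes them explicit (essentially $\pm n$ times units on the prime-power components). Cancellation is then extracted from a large-sieve inequality for this bilinear form, combined with the Weil bound \eqref{eq:weilbound} for the complete exponential sums that arise; balancing the diagonal contribution against the off-diagonal estimate produces exactly the four terms displayed in \eqref{iwaniec-fourier-coefficients}.

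The remaining issue is that the quadratic twist $\pfrac{\pm|D|}\bullet$ must be carried through the entire argument. Since $D$ is a fixed fundamental discriminant dividing $N$, one opens this character on each prime-power component (or absorbs it into a Gauss sum) and verifies that it does not destroy the additive-character cancellation, at the cost of an implied constant depending on $N$; this is precisely the content of Waibel's adaptation \cite[\S 3]{waibel}, which reproduces Iwaniec's exponents. I expect the off-diagonal bilinear estimate to be the main obstacle: one must obtain square-root-type cancellation simultaneously in $p$ and in $c/Q$, uniformly in the twisting character and in the shift $2\mu n$, while confining all dependence on the level to the constant $\ll_{N,\ep}$ (with no dependence on $p$, $n$, or $x$). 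Once the linearization and this large-sieve bound are in hand, assembling the terms and tracking the ranges $P<p\le 2P$ and $c\le x$ is routine bookkeeping.
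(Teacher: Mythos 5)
Your proposal takes essentially the same route as the paper: the paper gives no independent proof of this proposition, but simply cites Iwaniec's level-averaged bound \cite[Theorem~3]{iwaniec-fourier-coefficients} for the theta-multiplier and Waibel's extension \cite[\S 3]{waibel} to quadratic twists, which are exactly the two results you invoke. Your additional sketch of the internal mechanism (Sali\'e-sum linearization over roots of $x^2\equiv n^2\pmod c$, exploitation of the average over prime levels, Weil-type bounds) is consistent with how those cited proofs proceed, so there is nothing to correct.
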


We choose the smooth function 
\begin{equation} \label{phichoice}
\Phi(u):=\mfrac18\sqrt{\mfrac\pi2}\, u^{-\frac{1}{2}} J_{\frac{9}{2}}(u).
\end{equation}
At $u=0$ we have the expansion $\Phi(u)=\frac{u^4}{7560}+\cdots$, so  
\eqref{smoothcond} is satisfied.
Using the Weber--Schafheitlin integral \cite[(10.22.57)]{nist} we compute
\begin{equation} \label{t1}
\tilde{\Phi}(2it)=-  i   \frac{t(1+t^2) } {(1+4t^2)(9+4t^2)(25+4t^2)}\ch \pi t  \quad \text{for} \quad t \in \mathbb{R},
\end{equation}
and
\begin{equation} \label{t2}
\tilde{\Phi}(\ell)=-\frac18 \frac{\ell(\ell^2-4) \cos  \pfrac{ \pi\ell}{2}}{(\ell^2-1)(\ell^2-9)(\ell^2-25)}  \quad \text{for} \quad \ell \geq 1.
\end{equation}
Thus both \eqref{DFI1} and \eqref{DFI2} are satisfied (note that the cosine factor cancels the zeros in the denominator of \eqref{t2}).

For $k=\pm\frac12$ we have  
\begin{equation*}
\Hat{\Phi}(t)=   \frac{\sqrt 2\, t(1+t^2)} {(1+4t^2)(9+4t^2)(25+4t^2)}\,(\ch \pi t)^2\frac{D_k(t)}{\sh(\pi t)}.
\end{equation*}
We have  $\hat{\Phi}(t)>0$ for   $t \in\R\cup i(0, 1/4]$. 
When  $k=-\frac12$, the factor $D_k(t)$ produces a pole of $\hat\Phi$ at $t=i/4$. 
However, this value does not occur in the sum  \eqref{eq:calLdef}, 
since this value of the spectral parameter $r_j$ corresponds to the minimal eigenvalue $\lambda_j=\frac3{16}$, which does not arise
 by the  discussion at the end of Section~\ref{sec:back}  since there are no holomorphic
modular forms in negative weight.  In this  case  every eigenvalue has $\lambda_j\geq \frac14-\pfrac7{32}^2$;
this follows from the discussion in \cite[\S 3]{sarnak-additive} and the lower bound $\frac14-\pfrac{7}{64}^2$ which is available in weight 
zero by the work of  Kim and Sarnak \cite[Appendix 2]{kim-sarnak}.

\begin{figure}[h]
\centering
\begin{subfigure}[t]{0.48\textwidth}
	\includegraphics[width=\textwidth,height=1.8in]{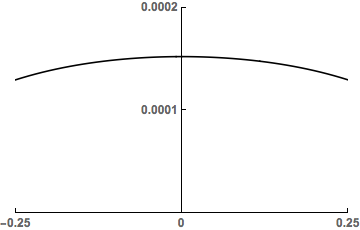}
    \caption{$\hat\Phi(i t)$ for $t\in [-1/4, 1/4]$ when $k=1/2$.}
\end{subfigure}
\hfill
\begin{subfigure}[t]{0.48\textwidth}
	\includegraphics[width=\textwidth,height=1.8in]{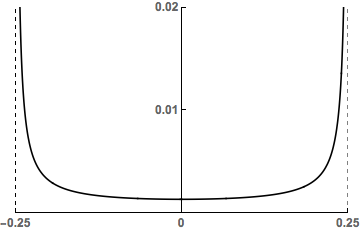}
    \caption{$\hat\Phi(i t)$ for $t\in (-1/4, 1/4)$ when $k=-1/2$.}
    \label{kfigB}
\end{subfigure}
\caption{Plots of $\hat\Phi(t)$ for imaginary $t$.}
\label{plots}	
\end{figure}

Let  $n\in \N$ and let $\sQ$ be as in \eqref{eq:Qdef}.
For  each $Q\in \sQ$,  Proposition~\ref{DFI} gives
\begin{equation*}
\mathcal{L}^{(Q)}_{\hat{\Phi}}(n,n)+\mathcal{M}^{(Q)}_{\hat{\Phi}}(n,n)=\gamma_k \sK^{(Q)}_{\Phi}(n,n)-\mathcal{N}^{(Q)}_{\check{\Phi}}(n,n).
\end{equation*}
From \eqref{eq:calLdef} and \eqref{eq:calMdef} we see that  $\mathcal{L}^{(Q)}_{\hat{\Phi}}(n,n)$ and $\mathcal{M}^{(Q)}_{\hat{\Phi}}(n,n)$ are positive, from which 
\begin{equation}\label{eq:keyineq0} 
\mathcal{L}^{(Q)}_{\hat{\Phi}}(n,n) \leq   \gamma_k \sK^{(Q)}_{\Phi}(n,n)-\mathcal{N}^{(Q)}_{\check{\Phi}}(n,n).
\end{equation}

 For each $Q \in \sQ$, the functions $\left\{[\Gamma_0(N):\Gamma_0(Q)]^{-\frac{1}{2}} u_j \right\}$ form an orthonormal subset of $\sLT_k(Q,\nu)$.
Since $[\Gamma_0(N):\Gamma_0(Q)] \leq p+1 \ll P$,
we find that 
\begin{equation*}
\mathcal{L}^{(Q)}_{\Hat{\Phi}}(n,n)  \geq \frac{\mathcal{L}^{(N)}_{\Hat{\Phi}}(n,n) }{[\Gamma_0(N):\Gamma_0(Q)]} \gg  \frac{\mathcal{L}^{(N)}_{\Hat{\Phi}}(n,n) }{P}.
\end{equation*}
Since  $|\sQ| \asymp P/ \log P$,  summing \eqref{eq:keyineq0} over $Q$ gives
\begin{equation}  \label{keyineq2}
 \frac{1}{\log P} \mathcal{L}^{(N)}_{\Hat{\Phi}}(n,n)\ll   \sum_{Q \in \sQ} \left | \sK^{(Q)}_{\Phi}(n,n) \right |+\sum_{Q \in \sQ}\left | \mathcal{N}^{(Q)}_{\check{\Phi}}(n,n) \right |.
\end{equation}
As in \cite{iwaniec-fourier-coefficients} and \cite{duke-half-integral}, we 
choose 
\[P=n^{\frac{1}{7}}.\]

In the next two sections we  bound the right-hand side of \eqref{keyineq2}. 

\subsection{Treatment of $\mathcal{N}^{(Q)}_{\check{\Phi}}(n,n)$}
When $k=\pm\frac12$ we have 
\begin{equation*}
\mathcal{N}^{(Q)}_{\check{\Phi}}(n,n)=\sum_{\substack{\ell \equiv k \pmod{2} \\ \ell > 2}} \check{\Phi}(\ell) \Gamma(\ell)
 \sum_{f \in \mathcal{B}_\ell}  |a_f(n)|^2.
\end{equation*}
Recalling the normalization \eqref{eq:cuspfourier}, we have Petersson's formula (c.f. \cite[Proposition 2.3]{duke-fried-iwan2017})
\begin{equation*}
\sum_{f \in \mathcal{B}_\ell}  |a_f(n)|^2=\frac{1}{\Gamma(\ell-1)} \( 1 + 2 \pi i^{-\ell} 
\sum_{ c \equiv 0 \pmod{Q}} c^{-1} S(n,n,c,\nu)  J_{\ell-1} \( \mfrac{4 \pi n}{c} \) \).
\end{equation*}
Recalling the definition \eqref{eq:phicheckdef}, we obtain 
\begin{multline} \label{keyn}
\sum_{Q \in \sQ}\left|\mathcal{N}^{(Q)}_{\check{\Phi}}(n,n)\right|
\ll P \sum_{\substack{\ell \equiv k \pmod{2} \\ \ell > 2  }}\ell\, | \tilde{\Phi}(\ell) | \\ 
+
 \sum_{\substack{\ell \equiv k \pmod{2} \\ \ell > 2  }}\ell\, | \tilde{\Phi}(\ell) | \sum_{Q \in \sQ} \Bigg| \sum_{ c \equiv 0 \pmod{Q}} c^{-1} S(n,n,c,\nu)  J_{\ell-1} \( \mfrac{4 \pi n}{c} \) \Bigg|.
\end{multline}
The first sum on the right is $\ll P$ by \eqref{t2}.
It is important to obtain bounds for the inner term which are uniform in $\ell$ as well as $n$.
For small $\ell$ we are able to control the dependence on $\ell$ explicitly, and for large $\ell$ we exploit the decay of  $\tilde{\Phi}(\ell)$.
Let $\beta>0$ be a parameter to be chosen later.  In the next two subsections we treat the ranges $\ell\leq n^\beta$ and $\ell>n^\beta$
separately.

\subsubsection{Small values of $\ell$: $\ell\leq n^{\beta}$}
For these values of $\ell$, we treat the three ranges
\begin{equation*}
1 \leq c \leq n/\ell^2, \quad n/\ell^2 \leq c \leq n \quad \text{and} \quad c\geq n.
\end{equation*}

In the first range we use an explicit representation for $J_{\ell-1}(z)$ when $\ell\in\frac12\N-\N$.
For such $\ell$ and for $k\geq 0$ define
\begin{equation*}
c_{\ell, k}:=\frac{\Gamma(\ell-\frac{1}{2}+k)}{k! \Gamma(\ell-\frac{1}{2}-k)},  
\end{equation*}
and define the polynomial 
\begin{equation*}
H_\ell(z):=\frac{e^{-\frac{\pi i(2\ell-1)}4}}{2\pi}\sum_{k=0}^{\ell-\frac32}i^k c_{\ell, k}z^k.
\end{equation*}
Using the first formula in  \cite[7.11]{bateman}, a computation shows that   for   $z\in \R$ and  $\ell\in\frac12\N-\N$ we have
\begin{equation}\label{eq:nicelommel}
 \sqrt z J_{\ell-1}(2\pi z)=e(z)H_\ell\pmfrac1{4\pi z}+e(-z)\bar H_\ell\pmfrac1{4\pi  z}.
\end{equation}

Recall the definition \eqref{eq:kloospm1}.
Using \eqref{eq:nicelommel} and 
partial summation, we obtain 
\begin{equation}\label{partsum}
\begin{aligned} 
 &\sum_{\substack{1 \leq c \leq n/\ell^2 \\  c \equiv 0 \pmod{Q}}} c^{-1} S(n,n,c,\nu)  J_{\ell-1} \pfrac{4 \pi n}{c}  \\
  &= \frac{n^{-\frac12}}{\sqrt 2} \(H_\ell\pfrac1{8\pi\ell^2}K_1^{(Q)}\(n; \frac{n}{\ell^2}\) +
  \bar H_\ell\pfrac1{8\pi\ell^2}K_{-1}^{(Q)}\(n; \frac{n}{\ell^2}\)\)\\
  &\qquad\qquad -  \frac{n^{-\frac{3}{2}}}{8\pi\sqrt 2} \int_1^{n/\ell^2} H'_\ell\pfrac{x}{8 \pi n} K_1^{(Q)}(n; x)\, dx  
  -\frac{n^{-\frac{3}{2}}}{8\pi\sqrt 2} \int_1^{n/\ell^2}  \bar H'_\ell\pfrac{x}{8 \pi n} K_{-1}^{(Q)}(n;x)\, dx.
\end{aligned}
\end{equation}

For $0 \leq k \leq \ell-\frac32$ we have 
\begin{equation*}
c_{\ell, k}\leq\frac{(2\ell)^{2k}}{k!}.
\end{equation*}
Therefore  $H_\ell \( \frac{1}{8 \pi \ell^2} \) \ll 1$ (the series converges as $\ell\rightarrow\infty$), and 
we have

\begin{equation*}
H'_\ell  \pfrac{x}{8 \pi n}\ll\ell^2\quad \text{for } \quad 1 \leq x \leq n/\ell^2.
\end{equation*}
Using  \eqref{partsum} and  Proposition~\ref{prop:Iwanbound}  with $P=n^\frac17$ we obtain
\begin{equation} \label{est1}
\sum_{Q \in \sQ}\Bigg|  \sum_{\substack{1 \leq c \leq n/\ell^2 \\  c \equiv 0 \pmod{Q}}} c^{-1} S(n,n,c,\nu)  J_{\ell-1} \pmfrac{4 \pi n}{c} \Bigg|
 \ll \(\ell^{-\frac12}n^{\frac{3}{7}} +\ell^{-\frac14} n^{\frac{23}{56}} \)(\ell n)^{\ep}.
\end{equation}

Now consider the range $n/\ell^2 \leq c \leq n$. 
We have \cite[(10.6.1)]{nist}
\begin{equation}\label{eq:jderiv}
2J_{\ell-1}'(z)=J_{\ell-2}(z)-J_\ell(z).
\end{equation}
We also have  a uniform bound for the $J$-Bessel function due to  L. Landau \cite{landau}.
\begin{proposition}[Landau] \label{uniformJ}
For $v>0$ and   $x>0$,  we have
\begin{equation} \label{uJ}
\left|J_v(x) \right|\leq c_0 x^{-\frac13},
\end{equation}
where $c_0=0.7857\dots$.
\end{proposition}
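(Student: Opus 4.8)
The plan is to reduce the proposition to a single extremal constant and then show that no positive order can beat it. Concretely, I would set
$$c_0 := \sup_{x>0} x^{1/3} J_0(x),$$
check that this supremum is finite and equals $0.7857\ldots$ — it is attained at the unique positive root of $J_0(x)=3xJ_1(x)$, coming from $\frac{d}{dx}\bigl(x^{1/3}J_0(x)\bigr)=\tfrac13 x^{-2/3}J_0(x)-x^{1/3}J_1(x)=0$ — and then reduce the statement to proving $x^{1/3}|J_v(x)|\le c_0$ for every $v>0$. The conceptual core is a monotonicity assertion: the quantity $\sup_{x>0}x^{1/3}|J_v(x)|$ is non-increasing in $v$, so that the global extremum over all $(v,x)$ is approached as $v\to 0^+$.

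To control the $x$-dependence for fixed $v$ I would pass to the normal form of Bessel's equation. Writing $y(x):=x^{1/2}J_v(x)$ kills the first-order term and gives
$$y''+Q(x)\,y=0, \qquad Q(x)=1-\frac{v^2-\tfrac14}{x^2},$$
so that $x^{1/3}J_v(x)=x^{-1/6}y(x)$. On the oscillatory range $x>x_*:=\sqrt{v^2-\tfrac14}$ (for $v>\tfrac12$) consider the energy $E(x):=y(x)^2+y'(x)^2/Q(x)$. Using the equation one finds
$$E'(x)=-\frac{Q'(x)}{Q(x)^2}\,y'(x)^2, \qquad Q'(x)=\frac{2(v^2-\tfrac14)}{x^3}>0,$$
so $E$ is decreasing: the oscillation amplitude of $y$, and hence $x^{1/3}|J_v(x)|$, peaks near the turning point and decays monotonically for larger $x$. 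For $0<v<\tfrac12$ one has $Q>1$ throughout, $E$ increases to its limit $2/\pi$, whence $|y|\le\sqrt{2/\pi}$; combined with $x^{1/3}J_v(x)\to 0$ as $x\to 0^+$ and a direct comparison with the $v=0$ profile, this disposes of the small-order range.

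The remaining and most delicate region is the transition zone $x\approx x_*\approx v$, where $Q\to 0$ and the energy bound degenerates (WKB breakdown). Here I would invoke the uniform Airy approximation: since $Q'(x_*)=2/x_*$, rescaling $x=x_*+v^{1/3}\tau$ turns the equation into $y_{\tau\tau}+2\tau y\approx 0$, whose solutions are multiples of $\Ai(-2^{1/3}\tau)$, so the peak of $x^{1/3}|J_v(x)|$ in this zone converges as $v\to\infty$ to $2^{1/3}\max_{t>0}\Ai(-t)=0.6748\ldots<c_0$. This settles large $v$; the sub-turning region $x<x_*$, where $J_v$ is positive and increasing, is dominated by its value at $x_*$ and needs no separate argument.

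The main obstacle is precisely the matching at the turning point together with the \emph{sharp} constant: one must show quantitatively that the transition-zone maxima remain below $c_0$ for all finite $v$, and that no intermediate order overtakes the limiting $v=0$ value. The soft energy argument localizes the extremum but does not by itself pin the constant, so this step requires the careful Airy asymptotics of the previous paragraph made effective and uniform in $v$; I expect it to absorb the bulk of the work, with the monotonicity in $v$ being the decisive quantitative input.
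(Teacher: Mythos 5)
The paper does not prove this proposition at all: it is quoted directly from Landau \cite{landau}, so the benchmark is Landau's own proof, whose architecture your sketch in fact reconstructs. Your $c_0=\sup_{x>0}x^{1/3}J_0(x)=0.7857\dots$ is Landau's sharp constant, and your transition-zone limit $2^{1/3}\sup_{t>0}\Ai(-t)=0.6748\dots$ is his second constant. The parts you actually argue are sound: the Liouville normal form $y=x^{1/2}J_v(x)$ with $y''+Qy=0$, the energy identity $E'=-(Q'/Q^2)\,y'^2$ showing that for $v>\frac12$ the local maxima of $x^{1/3}|J_v(x)|$ decay beyond the turning point, the bound $|y|\le\sqrt{2/\pi}$ for $0<v<\frac12$, and the Airy rescaling at $x_*$.

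The genuine gap is that the decisive step --- the monotonicity in $v$ of $\mu(v):=\sup_{x>0}x^{1/3}|J_v(x)|$ --- is asserted but never proved, and nothing in your toolkit addresses it: the energy argument compares values at different $x$ for a \emph{fixed} order and transfers no information across orders. Without it the proof fails concretely in two places. For $0<v<\frac12$ the energy bound gives only $x^{1/3}|J_v(x)|\le\sqrt{2/\pi}\,x^{-1/6}$, and $\sqrt{2/\pi}=0.7979\dots>c_0$, so a window around $x\approx 1$ (roughly $x\lesssim 1.1$) is left uncovered; worse, as $v\to 0^+$ the supremum $\mu(v)$ tends to $c_0$ itself, so no crude estimate can close this window --- the ``direct comparison with the $v=0$ profile'' you invoke there \emph{is} the unproven claim. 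For $\frac12<v<\infty$ the Airy analysis only identifies $\lim_{v\to\infty}\mu(v)=0.6748\dots$; to conclude $\mu(v)\le c_0$ at every finite $v$ you would need explicit, $v$-uniform error bounds for the turning-point approximation (Olver-type), which you acknowledge but do not supply, leaving all intermediate orders uncontrolled. Landau's paper consists essentially of making exactly these steps rigorous (through delicate monotonicity arguments for Bessel functions, e.g.\ for the modulus $J_v^2+Y_v^2$ via Nicholson's formula, together with a careful analysis at the first critical point). As written, your proposal is a correct reduction to, and an announcement of, the hard step rather than a proof of it.
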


Using Proposition~\ref{uniformJ} we find that 
\begin{equation} \label{derivbounds}
\(x^{-\frac12}J_{\ell-1}\pmfrac{4\pi n}x\)'
\ll  n^{-\frac13}x^{-\frac76}+n^\frac23 x^{-\frac{13}6 }.
\end{equation}
Partial summation  using this bound
together with Proposition~\ref{prop:Iwanbound} and \eqref{derivbounds}  and a careful examination of the error terms which arise yields
\begin{equation} \label{est2}
 \sum_{Q \in \sQ} \Bigg| \sum_{\substack{n/\ell^2 \leq c \leq n \\  c \equiv 0 \pmod{Q}}} 
  c^{-1} S(n,n,c,\nu)  J_{\ell-1} \pmfrac{4 \pi n}{c} \Bigg| 
 \ll \(\ell^{\frac{11}{6}} n^{\frac{3}{7}}+\ell^{\frac{25}{12}} n^{\frac{23}{56}}\)(\ell n)^\ep.
\end{equation}

For the remaining range $c\geq n$, the treatment of Iwaniec \cite[pp. 400-401]{iwaniec-fourier-coefficients} applies
uniformly in $\ell$.  To see this, note that \eqref{eq:jderiv} and the  inequality  $|J_{\ell-1}(x)|\leq \frac{x^{\ell-1}}{\Gamma(\ell-1)}$ for $x>0$ 
\cite[(10.14.4)]{nist} give the estimate
\begin{equation} \label{simplerderiv}
\(x^{-\frac12}J_{\ell-1}\pmfrac{4\pi n}x\)' \ll n x^{-\frac{5}{2}} \quad \text{for } \quad x\geq n
\end{equation}
which is used in that argument.
We conclude that for all $\ell$ we have
\begin{equation} \label{est3}
\sum_{Q \in \sQ} \Bigg| \sum_{\substack{c\geq n \\  c \equiv 0 \pmod{Q}}} c^{-1} S(n,n,c,\nu) J_{\ell-1} \pmfrac{4 \pi n}{c}\Bigg|
 \ll n^{\frac{3}{7}+\ep}.
\end{equation}
From  \eqref{t2},   \eqref{est1}, \eqref{est2} and \eqref{est3} we obtain 
\begin{multline} \label{smallL}
\sum_{\substack{\ell \equiv k \pmod{2} \\   2 < \ell \leq  n^{\beta} }}\ell\, | \tilde{\Phi}(\ell) | \sum_{Q \in \sQ}
\Bigg| \sum_{c \equiv 0 \pmod{Q}} c^{-1} S(n,n,c,\nu)  J_{\ell-1} \( \mfrac{4 \pi n}{c} \) \Bigg| \\ 
 \ll \sum_{2 <\ell\leq n^{\beta}} \ell^{-2} 
 \(\ell^{\frac{11}{6}} n^{\frac{3}{7}}+\ell^{\frac{25}{12}} n^{\frac{23}{56}}\)(\ell n)^{\ep} \ll n^{\frac{3}{7}+\frac56 \beta+\ep}+n^{\frac{23}{56}+\frac{13}{12} \beta+\ep}.
\end{multline}

\subsubsection{Large values of $\ell$: $\ell>n^{\beta}$}
Let $0<\gamma<1$ be a parameter to be chosen later. We consider the  three ranges
\begin{equation*}
1 \leq c \leq n^{\gamma}, \quad n^{\gamma} \leq c \leq n, \quad \text{and} \quad c\geq n.
\end{equation*} 
We begin with a simple lemma.
\begin{lemma} \label{trivial}
Let $\sQ$ be as in \eqref{eq:Qdef}. For $b>-1$ we have
\begin{equation*}
\sum_{Q \in \sQ} \sum_{\substack {c\leq x\\ c \equiv 0 \pmod{Q}}} c^b (n,c)^{\frac{1}{2}} \ll_\ep x^{b+1} n^\ep\log P.
\end{equation*}
\end{lemma}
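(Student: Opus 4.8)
The plan is to prove Lemma~\ref{trivial} by a direct estimation, exploiting the restrictive structure of the set $\sQ$. Recall from \eqref{eq:Qdef} that each $Q\in\sQ$ has the form $Q=pN$ with $p$ a prime in $(P,2P]$; in particular every $c\equiv 0\pmod Q$ is a multiple of $pN$, so the inner sum runs over $c=mpN$ with $1\le m\le x/(pN)$. Since we only need an upper bound, the first step is to discard the congruence $c\equiv0\pmod Q$ in favor of the weaker bound $c\ge Q\ge pN\gg P$ on the smallest term, and to handle the factor $(n,c)^{1/2}$ crudely.

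First I would deal with the arithmetic factor $(n,c)^{1/2}$. The standard device is to write $(n,c)^{1/2}\le\sum_{d\mid(n,c)}1=\sum_{d\mid n,\,d\mid c}1$, or more efficiently to use $(n,c)^{1/2}\ll_\ep (n,c)\, n^\ep / \,$something; cleanest is simply $(n,c)^{1/2}\le d(n)^{1/2}\max_{d\mid n}\,d^{1/2}$ bounded on average. Concretely, using $\sum_{c\le y}c^b(n,c)^{1/2}=\sum_{d\mid n}d^{1/2}\sum_{\substack{c\le y\\ d\mid c}}c^b\ll \sum_{d\mid n}d^{1/2}\cdot d^{-1}y^{b+1}\ll y^{b+1}\sum_{d\mid n}d^{-1/2}\ll_\ep y^{b+1}n^\ep$, where the last step uses $\sum_{d\mid n}d^{-1/2}\le d(n)\ll_\ep n^\ep$ and the condition $b>-1$ guarantees convergence of $\sum_{d\mid c\le y}c^b$ to order $y^{b+1}/d$. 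This reduces the claim to controlling the outer sum over $Q\in\sQ$.

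For the outer sum, apply the inner estimate with $y=x$ restricted to multiples of $Q$: by the same computation,
\begin{equation*}
\sum_{\substack{c\le x\\ c\equiv0\,(Q)}}c^b(n,c)^{1/2}\ll_\ep Q^b\Big(\frac{x}{Q}\Big)^{b+1}n^\ep=\frac{x^{b+1}}{Q}n^\ep,
\end{equation*}
since writing $c=Qm$ gives $\sum_{m\le x/Q}(Qm)^b(n,Qm)^{1/2}\ll Q^b n^\ep\sum_{m\le x/Q}m^b\ll Q^b n^\ep (x/Q)^{b+1}$. Summing over $Q\in\sQ$ and using $Q=pN\gg P$ together with $|\sQ|\asymp P/\log P$ (stated just before \eqref{keyineq2}) yields
\begin{equation*}
\sum_{Q\in\sQ}\frac{x^{b+1}}{Q}n^\ep\ll x^{b+1}n^\ep\cdot\frac{|\sQ|}{P}\ll x^{b+1}n^\ep\frac{1}{\log P},
\end{equation*}
which is even stronger than the claimed bound. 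The stated $\log P$ factor in the lemma is therefore a safe overestimate, presumably retained for convenience in later applications.

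The main obstacle, such as it is, lies in making the case $-1<b<0$ rigorous: when $b$ is negative the sum $\sum_{m\le x/Q}m^b$ is dominated by the small terms rather than by $(x/Q)^{b+1}$, and one must check that $\sum_{m\le M}m^b\ll_b M^{b+1}+1\ll M^{b+1}$ holds uniformly (the additive constant being absorbed once $M\ge 1$, i.e. $x\ge Q$; terms with $x<Q$ contribute an empty sum and may be dropped). This is elementary but is the only place where the hypothesis $b>-1$ is genuinely used, so I would state the partial-summation/comparison bound $\sum_{m\le M}m^b\ll_b\max(M^{b+1},1)$ explicitly and verify the edge behavior there.
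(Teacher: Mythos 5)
Your skeleton matches the paper's proof --- expand $(n,c)^{1/2}\le\sum_{d\mid (n,c)}d^{1/2}$, count multiples to get a saving of $1/Q$ per modulus, then sum over $Q\in\sQ$ using $Q=pN\gg P$ and $|\sQ|\asymp P/\log P$ --- and your handling of the unrestricted sum and of the range $-1<b<0$ is fine (though the ``$=$'' in your divisor expansion should be ``$\le$''). However, there is a genuine gap at the one step that carries all the content, namely the claim
\[
\sum_{m\le x/Q}(Qm)^b\,(n,Qm)^{1/2}\ \ll\ Q^b\, n^\ep \sum_{m\le x/Q}m^b,
\]
which you justify only with the phrase ``by the same computation.'' It is not the same computation. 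Expanding $(n,Qm)^{1/2}\le\sum_{d\mid(n,Qm)}d^{1/2}$, the condition on $m$ is now $d\mid Qm$, not $d\mid m$, so the inner count runs over multiples of $d/(d,Q)$ rather than of $d$; a divisor $d$ of $n$ sharing a large factor with $Q$ is divisible-into-$Qm$ for far more values of $m$ than your bound allows. Concretely, if $p$ were permitted to divide $n$, then $(n,Qm)\ge p$ for every $m$, and the left-hand side would be at least $p^{1/2}Q^b\sum_m m^b\gg P^{1/2}Q^b\sum_m m^b$, a loss of $n^{1/14}$ (with $P=n^{1/7}$) instead of $n^\ep$. So any correct justification must use the hypothesis in \eqref{eq:Qdef} that $p\nmid 2nN$ --- a hypothesis your argument never invokes.

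With that hypothesis the repair is short. For $d\mid n$ one has $(d,Q)=(d,pN)=(d,N)\le (n,N)$, so $d\mid Qm$ forces $\frac{d}{(d,N)}\mid m$, whence $\sum_{m\le M,\,d\mid Qm}m^b\ll_b (d,N)\,M^{b+1}/d\le N M^{b+1}/d$; equivalently, one can use $(n,Qm)\le (n,Q)(n,m)\le N\,(n,m)$ and then apply your unrestricted bound. Either route recovers your display (with harmless dependence on $N$), after which your summation over $Q\in\sQ$ goes through verbatim. This is precisely what the paper does: it sums over $c\equiv 0\pmod{[d,Q]}$, writes $d^{1/2}/[d,Q]=(d,Q)/(d^{1/2}Q)\le (n,N)/(d^{1/2}Q)$, and explicitly notes that ``such $Q$ have $Q=pN$ with $p\nmid n$.'' Your closing observations --- that the hypothesis $b>-1$ is what makes $\sum_{m\le M}m^b\ll_b M^{b+1}$ work, and that summing $1/Q$ over $\sQ$ actually yields $1/\log P$, stronger than the stated $\log P$ --- are correct and agree with what the paper's own proof gives.
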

\begin{proof}
The inner sum is 
\[\leq \sum_{d\mid n}d^\frac12\!\!\!\!\!\!\sum_{\substack{c \leq x \\ c \equiv 0 \pmod{[d,Q]}}} c^{b}
\ll x^{b+1}\sum_{d\mid n}\frac{d^\frac12}{[d,Q]}
\leq	x^{b+1} \frac{(n, N)}{Q}\sum_{d\mid n}\frac1{d^\frac12}\ll x^{b+1}\frac{ N}{Q}n^\ep,
\]
where we have used the fact that such  $Q$ have $Q=pN$ with $p\nmid n$.
Writing $Q=pN$ and summing over $p$ gives the lemma.
\end{proof}

In the first range, we estimate using the  Weil bound \eqref{eq:weilbound},  Proposition~\ref{uniformJ}, and Lemma~\ref{trivial}. We obtain
\begin{equation} \label{initrang}
\sum_{Q \in \sQ} \Bigg| \sum_{\substack{1 \leq c \leq n^{\gamma} \\  c \equiv 0 \pmod{Q}}} c^{-1} S(n,n,c,\nu)J_{\ell-1}  \pmfrac{4 \pi n}{c}\Bigg|
   \ll \sum_{Q \in \sQ} n^{-\frac{1}{3}} \sum_{\substack{1 \leq c \leq n^{\gamma} \\  c \equiv 0 \pmod{Q}}} c^{-\frac{1}{6}} (n,c)^{\frac{1}{2}} \ll n^{\frac{5}{6} \gamma-\frac{1}{3}+\ep}.
\end{equation}
In the second range we use partial summation together with  Proposition~\ref{prop:Iwanbound} and  \eqref{derivbounds}.
Examining the error terms which arise from this computation, we  obtain  
\begin{multline} \label{midrang}
\sum_{Q \in \sQ} \Bigg| \sum_{\substack{n^\gamma \leq c \leq n \\  c \equiv 0 \pmod{Q}}} c^{-1} S(n,n,c,\nu)J_{\ell-1}  \pmfrac{4 \pi n}{c} \Bigg|
  \\
     \ll \(n^{\frac{3}{7}}
     +n^{\frac{61}{42}-\frac{25}{24}\gamma}
     +n^{\frac{113}{84}-\frac{11}{12}\gamma}
      +n^{\frac{25}{42}-\frac{1}{6}\gamma}
      +n^{\frac{19}{42}-\frac{1}{24}\gamma}     
      +n^{\frac{29}{84}+\frac{1}{12}\gamma}
      +n^{-\frac{17}{42}+\frac{5}{6}\gamma}\)n^\ep.            
\end{multline}
For the third range we recall that \eqref{est3} holds for all $\ell$.

We choose $\gamma=\frac{47}{49}$ to balance \eqref{initrang} and \eqref{midrang}
(this is the value for which $\frac{113}{84}-\frac{11}{12}\gamma=\frac{5}{6} \gamma-\frac{1}{3}$).
For this range of $\ell$ we obtain
\begin{equation} \label{total}
\sum_{Q \in \sQ}  \Bigg| \sum_{\substack{n^\gamma \leq c \leq n \\  c \equiv 0 \pmod{Q}}} c^{-1} S(n,n,c,\nu)J_{\ell-1}  \pmfrac{4 \pi n}{c} \Bigg|
  \ll n^{\frac{137}{294}+\ep},
\end{equation} 
from which 
\begin{multline} \label{largeL}
\sum_{\substack{\ell \equiv k \pmod{2} \\   \ell> n^{\beta} }}\ell |  \tilde{\Phi}(\ell) | \sum_{Q \in \sQ}
\Bigg|\sum_{c \equiv 0 \pmod{Q}} c^{-1} S(n,n,c,\nu)  J_{\ell-1} \( \mfrac{4 \pi n}{c} \) \Bigg|  \\ \ll
 n^{\frac{137}{294}+\ep}\sum_{\ell>n^\beta} \ell^{-2}  
  \ll n^{\frac{137}{294}-\beta+\ep}.
\end{multline}
Finally, we choose $\beta=\frac{1}{49}$ to balance \eqref{smallL} and \eqref{largeL}.  From  \eqref{keyn}, we obtain
\begin{equation} \label{Nest}
\sum_{Q \in \sQ} \left|\mathcal{N}^{(Q)}_{\check{\Phi}}(n,n)\right| \ll n^{\frac{131}{294}+\ep}.
\end{equation}

\subsection{Treatment of $\sK_{\Phi}^{(Q)}(n,n)$}
From the definition \eqref{eq:kphidef} we have 
\begin{multline} \label{Ksum}
\sum_{Q \in \sQ} \left| \sK_{\Phi}^{(Q)}(n,n)\right|
 \ll n^{-\frac{1}{2}} \sum_{Q \in \sQ}\Bigg|  \sum_{\substack{c \leq n \\ c \equiv 0 \pmod{Q} }}  
 c^{-\frac{1}{2}} S(n,n,c,\nu)   J_{\frac{9}{2}} \( \mfrac{4 \pi n}{c} \)  \Bigg| \\
 + n^{-\frac{1}{2}}  \sum_{Q \in \sQ} \Bigg| \sum_{\substack{c>n \\ c \equiv 0 \pmod{Q} }}  
 c^{-\frac{1}{2}} S(n,n,c,\nu)   J_{\frac{9}{2}} \( \mfrac{4 \pi n}{c} \)  \Bigg|.
\end{multline}
Here the situation is simpler since the  $J$-Bessel function has fixed order.
For the first term, we use the expression \eqref{eq:nicelommel} and partial summation
with Proposition~\ref{prop:Iwanbound}, 
together with the fact that $\(x^\frac12 H_\frac{11}2\pfrac x{8\pi n}\)'\ll x^{-\frac12}+x^\frac12 n^{-1}$ for $1\leq x\leq n$.
For the second term, 
\eqref{eq:jderiv} and \cite[(10.14.4)]{nist} give
 $\(J_\frac92\pfrac{4\pi n}x\)'\ll n^\frac92x^\frac{-11}2$ for $x\geq n$.
We conclude that
\begin{equation} \label{Kest}
\sum_{Q \in \sQ}\left|  \sK_{\Phi}^{(Q)}(n,n) \right| \ll n^{\frac{3}{7}+\ep}.
\end{equation}

\subsection{Proof of Theorem~\ref{trade}}
From \eqref{keyineq2}, \eqref{Nest} and \eqref{Kest} we have
\[ \mathcal{L}^{(N)}_{\Hat{\Phi}}(n,n)\ll n^{\frac{131}{294}+\ep}.\]
Until now we have assumed that $N\equiv 0\pmod 8$, but we  may drop this assumption  
using positivity after replacing  $N$ by $2N$ if necessary.
The last inequality gives 
\begin{equation}\label{eq:avgcoeff1}
 n \sum_{j \geq 0} \left|\rho_j(n)\right|^2 \frac{\hat{\Phi}(r_j)}{\ch \pi r_j}\ll n^{\frac{131}{294}+\ep}.
 \end{equation}
From  the asymptotics of the Gamma function \cite[(5.11.9)]{nist}, for $k=\pm \frac12$ we have 
\begin{equation}\label{eq:phistirling}
\hat\Phi(t)\gg t^{k-3}\qquad\text{for $t\geq 1$}.
\end{equation}
By the discussion in Section~\ref{sec:start}, there are positive constants 
$c_1$ and $c_2$ such that for each $r_j\in [0, 1]\cup i(0, 1/4]$ which appears in the sum
\eqref{eq:avgcoeff1} we have
\begin{equation}\label{eq:phiconstant}
c_1\leq \hat\Phi(r_j)\leq c_2.
\end{equation}
So for every $r_j$ which appears in the sum, we have
\begin{equation}\label{eq:philambda}
\hat\Phi(r_j)^{-1}\ll \lambda_j^\frac{3-k}2.
\end{equation}

Suppose that $x \geq 1$. We have $x^{k-3} \Hat{\Phi}^{-1}(r_j) \ll 1$ for $1 \leq r_j \leq x$ by \eqref{eq:phistirling}. The same bound   holds  for   $|r_j| \leq 1$ by \eqref{eq:phiconstant}.
Therefore
 \eqref{eq:avgcoeff1} gives
the average estimate
\begin{equation}\label{eq:avgcoeff}
 n \sum_{|r_j| \leq x} \frac{|\rho_j(n)|^2}{\ch\pi r_j} \ll x^{3-k}n^{\frac{131}{294}+\ep}.
\end{equation} 
Suppose now that $k=\frac12$ as in the statement of Theorem~\ref{trade}.
 When $n>0$, the theorem follows directly from  \eqref{eq:avgcoeff}.
 When $n<0$ it follows from the relationship \eqref{eq:acconj}.

\section{Kuznetsov trace formula in the mixed-sign case} \label{Kuznetsov1}
We give a version of the Kusnetsov trace formula in the mixed sign case which is suitable for our applications.
Suppose that $\phi:[0,\infty) \rightarrow \C$ is four times continuously differentiable and satisfies 
\begin{equation} \label{phicond}
\phi(0)=\phi'(0)=0, \quad \phi^{(j)}(x) \ll_{\ep} x^{-2-\ep} \quad (j=0,\ldots,4) \quad \text{as} \quad x \rightarrow \infty,
\end{equation}
for some $\ep>0$.
Define the transform
\begin{equation}\label{eq:phicheck}
\check{\phi}(r):=\ch \pi r \int_{0}^{\infty} K_{2 i r}(u) \phi(u) \frac{du}{u}
\end{equation}
(we use the notation  $\phi$ instead of $\Phi$ for the rest of the paper to avoid potential confusion with the transform defined in the last section).

Suppose that $N$ is a positive integer and that $\nu$ is a multiplier of weight $\frac12$ for $\Gamma_0(N)$ with the property that no cusp is singular 
with respect to $\nu$.  In this case the Kusnetsov formula has a relatively simple expression since there are no Eisenstein series and
there is no contribution from cusp forms due to the mixed sign of the arguments.
A proof of the following result is  given in Section~4 of  \cite{ahlgren-andersen} in the case when 
$\nu$ is the multiplier on $\SL_2(\mathbb{Z})$ associated to the Dedekind eta function 
(in this case there is no residual spectrum).
The general case follows by this argument with only cosmetic changes.
Blomer \cite{blomer} has proved a version of this formula for the twisted theta-multiplier.

\begin{proposition} \label{Kuz} 
Let $\nu$ be a multiplier of weight $\frac12$ for $\Gamma_0(N)$  such that no cusp is singular with respect to $\nu$.  
Let $\rho_j(n)$ denote the coefficients of an
 orthonormal basis $\{v_j(\tau)\}$ for $\sLT_\frac12(N, \nu)$.
  Suppose that $\phi$ satisfies conditions \eqref{phicond}.  If $m_\nu>0$ and $n_\nu<0$ then
\begin{equation*}
\sum_{\substack{c>0\\c\equiv 0\pmod N}} \frac{S(m,n,c,\nu)}{c} \phi \( \frac{4 \pi \sqrt{m_{\nu} |n_{\nu}|  } }{c} \)=8 \sqrt{i} \sqrt{m_{\nu} |n_{\nu}| } \sum_{r_j} \frac{\overline{\rho_j(m)} \rho_j(n)}{\ch \pi r_j} \check{\phi}(r_j).
\end{equation*}
\end{proposition}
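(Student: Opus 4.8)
The plan is to derive the mixed-sign Kuznetsov formula (Proposition~\ref{Kuz}) by specializing the general Kuznetsov machinery established in \cite{ahlgren-andersen} to the present setting, where $\nu$ has no singular cusp. The key structural simplifications are that there are no Eisenstein series (so no continuous-spectrum term) and that, because $m_\nu>0$ while $n_\nu<0$, the holomorphic cusp form contribution vanishes: in the mixed-sign case the relevant Bessel transform is a $K$-Bessel transform rather than a $J$-Bessel transform, and the holomorphic forms only contribute when both indices have the same sign. Thus the formula collapses to a single spectral sum over the discrete Maass spectrum. First I would set up the Poincar\'e series $P_m$ attached to the cusp $\infty$ with index $m_\nu>0$ and compute its $n$-th Fourier coefficient in two ways.

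On the geometric side, unfolding the Poincar\'e series against itself (or computing the $n_\nu$-th Fourier coefficient directly from the $\Gamma_\infty\backslash\Gamma/\Gamma_\infty$ double-coset decomposition) produces the sum of Kloosterman sums $S(m,n,c,\nu)$ weighted by a Bessel factor. When $m_\nu$ and $n_\nu$ have opposite signs, the archimedean integral arising from the Bruhat cell is exactly the one producing $K_{2ir}$ via the test-function transform $\check\phi$ in \eqref{eq:phicheck}, together with the explicit constant $8\sqrt i$. On the spectral side, one expands the same Poincar\'e series (or its inner product with a second Poincar\'e series) in the orthonormal basis $\{v_j\}$, using the Fourier expansion \eqref{eq:f_fourier} to read off $\overline{\rho_j(m)}\rho_j(n)$, and the Whittaker/Bessel integrals contribute the factor $\check\phi(r_j)/\ch\pi r_j$. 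Equating the two expansions yields the stated identity. The conditions \eqref{phicond} are precisely what guarantee that $\check\phi$ decays fast enough for the spectral sum to converge absolutely and for all the manipulations (interchange of summation and integration, convergence of the Kloosterman sum series) to be justified.

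The main work, as the excerpt itself signals, is not in reinventing this argument but in verifying that the proof of \cite[\S4]{ahlgren-andersen}, which is written for the eta-multiplier on $\SL_2(\Z)$ where there is no residual spectrum, carries over \emph{verbatim up to cosmetic changes} to a general weight-$\tfrac12$ multiplier on $\Gamma_0(N)$ with no singular cusp. I would check three points: (i) that ``no cusp singular'' is exactly the hypothesis eliminating the Eisenstein contribution, so the continuous-spectrum integral in the general formula is simply absent; (ii) that the discrete spectrum here may include residues of Eisenstein series at poles in $(\tfrac12,1]$, but since those cusps are non-singular there is in fact no residual spectrum, so the sum over $r_j$ ranges only over honest Maass eigenforms plus possibly finitely many small eigenvalues already encoded in the basis $\{v_j\}$; and (iii) that the normalization of the Kloosterman sums \eqref{eq:kloos_def} and the multiplier relations \eqref{eq:nuconj} are consistent with those used in \cite{ahlgren-andersen}, so that no stray constants or sign twists appear.

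\emph{The hard part} is really a bookkeeping obstacle rather than a conceptual one: ensuring that the various half-integral-weight conventions (the choice of argument branch in $j(\gamma,\tau)$, the definition of $m_\nu=m-\alpha_\nu$, and the mixed-sign Whittaker normalization in \eqref{eq:f_fourier}) line up so that the constant $8\sqrt i$ and the placement of complex conjugation on $\rho_j(m)$ come out exactly as stated. Since the Poincar\'e-series computation is identical in form to the $\SL_2(\Z)$ case once these conventions are fixed, I would present the proof as a reduction: invoke the general framework of \cite{ahlgren-andersen} (and cite \cite{blomer} for the twisted-theta analogue as independent confirmation), note that the absence of singular cusps removes the continuous spectrum and the opposite signs of $m_\nu,n_\nu$ remove the holomorphic term, and remark that the remaining argument goes through with only cosmetic changes, exactly as the excerpt asserts.
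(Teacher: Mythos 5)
Your proposal is correct and takes essentially the same route as the paper, whose entire proof is to invoke \cite[\S 4]{ahlgren-andersen} (written there for the eta-multiplier on $\SL_2(\Z)$, where there is no residual spectrum) and observe that the argument carries over with only cosmetic changes, the absence of singular cusps killing the Eisenstein/residual contributions and the mixed signs killing the holomorphic term. Your added sketch of the Poincar\'e-series computation and the convention checks simply fills in what that citation contains.
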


We describe a  family of  test functions.
 Given $a,x>0$, let $T>0$ be a parameter with
\begin{equation}\label{eq:tdef}
T \leq x/3, \quad T \asymp x^{1-\delta} \quad \text{with} \quad 0<\delta<1/2.
\end{equation}

Let $\phi=\phi_{a,x,T}: [0,\infty) \rightarrow [0,1]$ be a smooth function (as in \cite{sarnak-tsimerman} and \cite{ahlgren-andersen}) satisfying 
\begin{enumerate}[\hspace{1.5em}(i)]\setlength\itemsep{.4em}
\item The conditions in \eqref{phicond},
\item $\phi(t)=1$ for $\frac{a}{2x} \leq t \leq \frac{a}{x}$,
\item $\phi(t)=0$ for $t \leq \frac{a}{2x+2T}$ and $t \geq \frac{a}{x-T}$,
\item $\phi^{\prime}(t) \ll \(\frac{a}{x-T}-\frac{a}{x} \)^{-1} \ll  \frac{x^2}{aT}$,
\item $\phi$ and $\phi^{\prime}$ are piecewise monotone on a fixed number of intervals.
\end{enumerate}
We require  bounds for  $\check{\phi}$ which are recorded in \cite[Theorem~6.1]{ahlgren-andersen}.

\begin{proposition}\label{prop:int}
Let a,x,T be as above and let $\phi=\phi_{a,x,T}$. Then
\begin{equation*}
\check{\phi}(r) \ll \begin{cases}
 r^{-\frac{3}{2}} e^{-\frac{r}{2}} & \emph{for} \quad 1 \leq r \leq \frac{a}{8x}, \\
 r^{-1} & \emph{for} \quad \max \(1,\frac{a}{8x} \) \leq r \leq \frac{a}{x},   \\
 \min \(r^{-\frac{3}{2}}, r^{-\frac{5}{2}} \frac{x}{T} \) & \emph{for} \quad r \geq  \max \( \frac{a}{x},1 \). 
\end{cases}
\end{equation*}
\end{proposition}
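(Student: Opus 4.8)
The plan is to reduce $\check\phi(r)$ to a single saddle-point integral and then to analyze it in three ranges of $r$ dictated by the position of the support of $\phi$ relative to the turning point of the Bessel function. Since $T\le x/3$, the support $[\frac{a}{2x+2T},\frac{a}{x-T}]$ is contained in $[\frac{3a}{8x},\frac{3a}{2x}]$, so on it $u\asymp a/x$. Starting from
\[
K_{2ir}(u)=\tfrac12\int_{-\infty}^\infty e^{-u\cosh t+2irt}\,dt,
\]
I would insert this into \eqref{eq:phicheck} and interchange the order of integration (legitimate because $\phi$ vanishes to second order at $0$ and is supported away from the origin). The task becomes to understand the kernel $\ch(\pi r)\,K_{2ir}(u)$ and to integrate it against $\phi(u)\,du/u$. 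The exponent $-u\cosh t+2irt$ has a saddle at $\sinh t_*=2ir/u$, i.e. $t_*=i\arcsin(2r/u)$ when $u>2r$ and $t_*$ real when $u<2r$; the transition is the turning point $u=2r$, i.e. $r\asymp a/x$. This is precisely the trichotomy in the statement.

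In the oscillatory range $r\ge\max(a/x,1)$ we have $u<2r$ and the saddle is real. Shifting the contour to $\im t=\pi/2$ extracts the factor $e^{-\pi r}$ and yields $\ch(\pi r)\,K_{2ir}(u)\approx \tfrac14\int_{-\infty}^\infty e^{i(2r\sigma-u\sinh\sigma)}\,d\sigma$ up to lower order terms. Stationary phase at $\cosh\sigma_0=2r/u$ (with $|\Psi''|\asymp r$ since $r\gg u$) contributes $\asymp r^{-1/2}$ and an outgoing phase, so $\check\phi(r)\approx r^{-1/2}\int\phi(u)\,e^{i\Psi(\sigma_0(u))}\,du/u$, an oscillatory integral in $u$ whose phase has derivative $\asymp r/u$. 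One integration by parts, using only that $\phi$ is bounded with a flat plateau, produces the factor $r^{-1}$ and hence the bound $r^{-3/2}$. A second, more careful integration by parts, now exploiting the derivative bound $\phi'\ll x^2/(aT)$ from property (iv) together with the length $\asymp aT/x^2$ of the two ramps on which $\phi$ is nonconstant, gives the additional saving and the refined bound $r^{-5/2}x/T$; the asserted estimate is the minimum of the two.

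In the exponential range $1\le r\le a/(8x)$ we have $u\ge\frac{3a}{8x}\ge 3r>2r$, so the saddle $t_*=i\theta$ with $\theta=\arcsin(2r/u)$ is purely imaginary and the exponent there equals $-(u\cos\theta+2r\theta)$. Deforming the contour through $t_*$ gives
\[
\ch(\pi r)\,K_{2ir}(u)\asymp r^{-1/2}\exp\bigl(2r(\tfrac\pi2-\theta-\cot\theta)\bigr),\qquad \theta=\arcsin(2r/u);
\]
the quantity $\tfrac\pi2-\theta-\cot\theta$ is negative throughout, and at the worst point $u=3r$ (where $\theta=\arcsin\tfrac23$) it is $\approx-\tfrac14$, giving decay rate $e^{-r/2}$. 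Integrating against $\phi(u)\,du/u$ over the support then yields $\check\phi(r)\ll r^{-3/2}e^{-r/2}$. In the transition range $\max(1,a/(8x))\le r\le a/x$ the two saddles coalesce at $u=2r$; a uniform Airy-type analysis of $\ch(\pi r)K_{2ir}(u)$ across the turning point, integrated against $\phi(u)\,du/u$, gives the largest bound $r^{-1}$, with no exponential gain since the phase is stationary to second order.

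The main obstacle is uniformity across the turning point: one must control the saddle-point and Airy analysis of $\ch(\pi r)K_{2ir}(u)$ uniformly as $r$ passes through $a/x$, and track the errors in the contour shift and in each integration by parts so that they are dominated by the stated main terms. The more delicate saving (the factor $x/T$) requires careful bookkeeping of the ramp contributions via properties (iv)–(v); the piecewise monotonicity hypothesis is exactly what makes the repeated integration by parts against the oscillation rigorous. In the body of the paper these bounds are available as \cite[Theorem~6.1]{ahlgren-andersen}, so I would ultimately cite that result rather than reproduce the analysis above.
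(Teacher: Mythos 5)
Your proposal is correct and, in the end, takes exactly the paper's route: the paper proves Proposition~\ref{prop:int} purely by citing \cite[Theorem~6.1]{ahlgren-andersen}, which is what you conclude with after your sketch. Your outline of the underlying saddle-point/turning-point analysis (oscillatory region for $r\geq a/x$ with integration by parts against properties (iv)--(v), exponential decay from the imaginary saddle when $r\leq a/(8x)$, and Airy behavior in the transition range) faithfully reflects how that cited theorem is actually proved, modulo small imprecisions (the saddle for $u<2r$ lies on $\im t=\pi/2$ rather than being real, and the $r^{-3/2}$ in the exponential range comes from absorbing a power of $r$ into the slack $e^{2r(\frac\pi2-\theta-\cot\theta)}\leq e^{-r/2}$).
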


\section{Proof of Theorem~\ref{thm:powersave} }\label{sec:endgame}

Let $\psi$ be the multiplier defined in \eqref{eq:psidef}.
Theorem~\ref{thm:powersave} will follow from a uniform estimate for sums of
 Kloosterman sums attached to $\psi$.
 We note that many of the terms $x^\ep$ could be changed to $\log x$ factors if necessary, but for simplicity
 we do not keep track of these here.
\begin{theorem} \label{tradesum}
Suppose that  $24n-1$ is positive and squarefree and that $0<\delta<1/2$. For  $X \geq 1$  and $\ep>0$ we have
\begin{equation} \label{demsum}
\sum_{\substack{c \leq X \\ c \equiv 0 \pmod{2} }} \frac{S(0,n,c,\psi)}{c} \ll_{\delta, \ep} \left(n^{\frac{13}{56}+\ep} X^{\frac{3}{4} \delta}+ n^{\frac{143}{588}+\ep}+X^{\frac{1}{2}-\delta} \right) X^{\ep}.
\end{equation} 
\end{theorem}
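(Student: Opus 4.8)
The plan is to linearize the sum spectrally through the Kuznetsov formula of Section~\ref{Kuznetsov1} and then to control the resulting spectral average by means of the three coefficient estimates of Section~\ref{sec:ell2}. Since $\alpha_\psi=\frac1{24}$ we have $0_\psi=-\frac1{24}<0$ and $n_\psi=n-\frac1{24}>0$, so $(0,n)$ is a mixed-sign pair and the smoothed sum is of the type treated by Proposition~\ref{Kuz}. That formula requires the Bessel weight $\phi\!\left(4\pi\sqrt{|0_\psi|\,n_\psi}/c\right)$, so I would first break the range $c\le X$ into dyadic blocks $c\asymp x$ and insert on each block a test function $\phi=\phi_{a,x,T}$ from Section~\ref{Kuznetsov1}, with $a=4\pi\sqrt{|0_\psi|\,n_\psi}\asymp\sqrt n$ and $T\asymp x^{1-\delta}$, so that $\phi(a/c)\equiv 1$ on the bulk of the block. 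The replacement of the sharp cutoff by these smooth weights introduces an error supported on transition intervals of length $\asymp T$ about each endpoint; bounding it crudely by the Weil-type bound \eqref{eq:lehmer_bound} gives $\sum_{x\le X}T\,x^{-\frac12+\ep}\ll X^{\frac12-\delta+\ep}$, which is the third term of \eqref{demsum}.

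On each smoothed block I would apply Proposition~\ref{Kuz} (in the orientation $0_\psi<0<n_\psi$, which follows from the stated case with only cosmetic changes), keeping the multiplier $\psi$ so that the coefficients produced are those of an orthonormal basis $\{v_j\}$ of $\sLT_\frac12(2,\psi)$. This converts $\sum_c c^{-1}S(0,n,c,\psi)\phi(a/c)$ into $\asymp\sqrt n\sum_{r_j}\overline{\rho_j(0)}\rho_j(n)(\ch\pi r_j)^{-1}\check\phi(r_j)$. Applying Cauchy--Schwarz in $j$ and separating the indices $0$ and $n$ reduces everything to the two nonnegative averages $\sum_{r_j}|\rho_j(m)|^2(\ch\pi r_j)^{-1}|\check\phi(r_j)|$ with $m\in\{0,n\}$; the prefactor $\sqrt n$ is absorbed by the normalization $|n_\psi|^{-\frac12}\asymp n^{-\frac12}$ carried by the estimate for the large index. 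The map $\tau\mapsto24\tau$ of Lemma~\ref{lem:level-up} transports these averages to coefficient averages for the twisted theta-multiplier $\pmfrac{12}\bullet\nu_\theta$ on $\Gamma_0(144)$, at the integer indices $-1$ and $24n-1$; the hypothesis that $24n-1$ be squarefree is precisely what lets Proposition~\ref{avgduke} and Theorem~\ref{trade} be applied to the large index.

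To conclude I would decompose the spectral parameter into dyadic ranges $r_j\asymp R$, pair the bound for $|\check\phi(r_j)|$ furnished by Proposition~\ref{prop:int} (whose regime depends on the ratio $a/x\asymp\sqrt n/x$) with the sharpest available coefficient estimate in that range, and sum. In the ranges of small and moderate $R$ the strong $n$-dependence of Proposition~\ref{avgduke} (exponent $\frac37$) and the Andersen--Duke estimate Proposition~\ref{AndDu} are most efficient, while for large $R$ the new Theorem~\ref{trade}, with its decisive $x^{3-\frac{\sgn n}2}$ behavior in the spectral aspect, takes over; the exponentially small factor $(\ch\pi r_j)^{-1}\asymp e^{-\pi r_j}$ renders the tail convergent. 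Choosing the crossover values of $R$ to balance the estimates, and balancing the polynomial decay of $\check\phi$ in the regime $|\check\phi|\ll r^{-\frac52}x/T$ (which feeds the parameter $\delta$ into the bound), produces the remaining two terms $n^{\frac{13}{56}+\ep}X^{\frac34\delta}$ and $n^{\frac{143}{588}+\ep}$; the exponent $\frac{143}{588}$ traces back, after Cauchy--Schwarz, to the exponent $\frac{131}{294}$ of Theorem~\ref{trade} (and it is this term, once multiplied by $(24n-1)^{-1/4}$, that yields the eventual saving $n^{-1/147}$).

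The principal difficulty is exactly this final optimization. One must simultaneously track the dependence on $x$, $R$, $T$ and $n$ across the several regimes of Proposition~\ref{prop:int}, keep every coefficient bound uniform in the spectral parameter, and select the dyadic thresholds between the three estimates so that their contributions balance; the half-integral weight multiplier $\psi$ and its level-raising to $\Gamma_0(144)$ must be carried along throughout. This is the analogue in the present setting of the delicate endgame of \cite{ahlgren-andersen} and of the weight-zero analysis of Sarnak and Tsimerman \cite{sarnak-tsimerman}, and it is where essentially all of the work resides.
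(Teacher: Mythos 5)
Your overall skeleton---dyadic blocks in $c$, the smooth cutoffs $\phi_{a,x,T}$, the mixed-sign Kuznetsov formula of Proposition~\ref{Kuz}, Cauchy--Schwarz, the level-raising map of Lemma~\ref{lem:level-up}, and the three coefficient estimates---is exactly the paper's strategy, and your treatment of the smoothing error (giving the term $X^{\frac12-\delta+\ep}$) is correct. However, the final optimization, which you yourself identify as the heart of the matter, assigns the three estimates to the wrong spectral ranges, and as described it would fail. In the paper, Theorem~\ref{trade} is used precisely in the \emph{transition} range $r_j\asymp a/x\asymp\sqrt n/x$, where $\check\phi(r_j)\ll r_j^{-1}$, giving the block bound $n^{\frac{131}{588}+\ep}(a/x)^{\frac32}\ll n^{\frac{143}{147}+\ep}x^{-\frac32}$; the tail $r_j\geq a/x$ is handled by Proposition~\ref{AndDu} alone together with Proposition~\ref{AndDu} paired with Proposition~\ref{avgduke}, whose minimum produces $n^{\frac{13}{56}+\ep}(x/T)^{\frac34}$. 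You propose the reverse assignment, and two things then go wrong. First, Theorem~\ref{trade} cannot control the tail at all: after Cauchy--Schwarz its spectral growth is $A^{\frac52}$, which exactly cancels the decay $|\check\phi|\ll A^{-\frac52}x/T$ from Proposition~\ref{prop:int}, so each dyadic block $A\geq x/T$ contributes $\asymp n^{\frac{131}{588}}x/T$ and the sum over blocks diverges. Your appeal to ``the exponentially small factor $(\ch\pi r_j)^{-1}$'' to force convergence is a misconception: that factor is already built into all three estimates (each bounds $\sum|b_j|^2/\ch\pi r_j$, which grows \emph{polynomially}), so no exponential decay is left over; convergence of the tail requires the $A^2$-type growth of the bound coming from Proposition~\ref{AndDu}, and the min structure there is what yields the exponent $\frac{13}{56}$ (even repairing your version by taking a minimum with the Andersen--Duke bound would give $n^{\frac{139}{588}}$ in place of $n^{\frac{13}{56}}=n^{\frac{136.5}{588}}$, so the claimed exponent is unreachable with your assignment). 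Second, using Proposition~\ref{avgduke} in the transition range gives $n^{\frac{3}{14}}(a/x)^{\frac52}\asymp n^{\frac{41}{28}}x^{-\frac52}$, which after balancing yields an exponent strictly worse than $\frac{143}{588}$.

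A further missing step concerns small $c$. The dyadic-block bound contains the term $n^{\frac{143}{147}+\ep}x^{-\frac32}$, which is worse than trivial when $x$ is small, so the theorem does not follow from dyadic summation alone: one must cut the $c$-sum at $x_0=n^{\frac{143}{294}}$, estimate the initial segment $c\leq x_0$ by the pointwise Weil bound of Corollary~\ref{weilpsi} (contributing $O(n^{\frac{143}{588}+\ep})$), and only then sum the dyadic blocks with $x\geq x_0$; the balance between these two contributions is exactly what produces the middle term of \eqref{demsum}. Your proposal never treats the small-$c$ range separately and attributes the term $n^{\frac{143}{588}+\ep}$ entirely to spectral balancing, which---especially with your range assignment---cannot produce it.
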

This  follows in turn from an estimate  over dyadic ranges.

\begin{proposition} \label{tradesumprop}
Suppose that  $24n-1$ is positive and squarefree and that $0<\delta<1/2$. For $x \geq 1$ and $\ep>0$ we have
\begin{equation*}
\sum_{\substack{x \leq c \leq 2x \\ c \equiv 0 \pmod{2} }} \frac{S(0,n,c,\psi)}{c}  \ll_{\delta, \ep} n^{\frac{143}{147}+\ep} x^{-\frac{3}{2}}+n^{\frac{13}{56}} x^{\frac{3}{4} \delta+\ep}+x^{\frac{1}{2}-\delta+\ep}.
\end{equation*}
\end{proposition}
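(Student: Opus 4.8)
The plan is to bound the dyadic sum by feeding the mixed-sign Kuznetsov formula (Proposition~\ref{Kuz}) into the three coefficient averages of Section~\ref{sec:ell2}. First I would record that $S(0,n,c,\psi)$ is genuinely a mixed-sign Kloosterman sum: since $\alpha_\psi=\frac1{24}$ we have $0_\psi=-\frac1{24}<0$ while $n_\psi=n-\frac1{24}>0$. After using the conjugation relation \eqref{eq:nuconj} (and the index symmetry of the Kloosterman sum) to place the positive frequency in the first slot, the sum meets the hypotheses of Proposition~\ref{Kuz}; note that for $\psi$ neither cusp of $\Gamma_0(2)$ is singular, so there is no Eisenstein or residual contribution. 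I would then choose the test function $\phi=\phi_{a,x,T}$ of Section~\ref{Kuznetsov1} with $a=4\pi\sqrt{n_\psi/24}\asymp\sqrt n$ and $T\asymp x^{1-\delta}$, so that $\phi(a/c)=1$ for $c\in[x,2x]$ while $\phi(a/c)$ is supported on $c\in[x-T,2x+2T]$. The two transition ranges $[x-T,x]$ and $[2x,2x+2T]$ are handled trivially: by Lemma~\ref{lem:fq-kloos} and \eqref{eq:lehmer_bound} we have $|S(0,n,c,\psi)|\ll c^{1/2+\ep}$, so these ranges contribute $\ll Tx^{-1/2+\ep}\ll x^{1/2-\delta+\ep}$, which accounts for the last term in the proposition.

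Applying Proposition~\ref{Kuz} converts the remaining $\phi$-weighted sum over all even $c$ into a spectral sum of the shape $\sqrt n\sum_{r_j}\frac{\overline{\rho_j(m)}\rho_j(n')}{\ch\pi r_j}\check\phi(r_j)$, in which one index has frequency $\asymp n$ and the other has frequency $O(1)$. Using the injection $\tau\mapsto24\tau$ of Lemma~\ref{lem:level-up}, I would identify $\sLT_\frac12(2,\psi,r)$ with a subspace of $\sLT_\frac12\!\left(144,\pfrac{12}\bullet\nu_\theta,r\right)$; under this map the frequencies $n-\frac1{24}$ and $-\frac1{24}$ become the integers $24n-1$ and $-1$, so the two coefficients become $b_j(24n-1)$ and $b_j(-1)$, to which the averages of Section~\ref{sec:ell2} (stated for $\pfrac{|D|}\bullet\nu_\theta$ with $D=12$, $N=144$) directly apply. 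Here it is essential that $24n-1$ is squarefree by hypothesis, so that Proposition~\ref{avgduke} and Theorem~\ref{trade} are available for that index with $\sgn=+1$, while the index $-1$ is squarefree with $\sgn=-1$. An application of Cauchy--Schwarz then splits the spectral sum into the product of two averages, each weighted by $\frac{|\check\phi(r_j)|}{\ch\pi r_j}$.

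The bulk of the work is to estimate these two weighted averages. I would decompose the spectral parameter dyadically, insert the bounds for $\check\phi$ from Proposition~\ref{prop:int} (which take three different shapes on the ranges $r\le a/(8x)$, $a/(8x)\le r\le a/x$, and $r\ge a/x$), and in each range apply the coefficient average that is most efficient there. Theorem~\ref{trade}, whose dependence $x^{3-\sgn n/2}$ on the spectral parameter is strong, controls the contribution of the larger $r_j$ and is responsible for the term $n^{143/147}x^{-3/2}$; Proposition~\ref{avgduke}, which is sharper in the $n$-aspect (exponent $\tfrac37$ against $\tfrac{131}{294}$) though weaker in $\lambda$, governs a complementary range and produces $n^{13/56}x^{3\delta/4+\ep}$, the factor $x^{3\delta/4}$ tracing back to the weight $x/T\asymp x^{\delta}$ carried by $\check\phi$ in the tail $r\ge a/x$. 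Proposition~\ref{AndDu} is used to absorb the smallest eigenvalues. Summing the dyadic pieces and optimizing the split points between the ranges yields the three displayed terms.

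The hard part will be the uniform bookkeeping of the $\check\phi$-weighted spectral sums across the three ranges of Proposition~\ref{prop:int}: one must match the decay of $\check\phi$ against the growth in $r_j$ of the coefficient averages so that the dyadic sums both converge and balance. Obtaining the exponent $n^{143/147}$, and hence the power saving underlying Theorem~\ref{thm:powersave}, depends on deploying Theorem~\ref{trade} exactly in the window where its gain in the $\lambda$-aspect outweighs its slight loss in the $n$-aspect, and on carefully controlling the negative-frequency factor $b_j(-1)$, whose average carries the larger spectral exponent $3-\sgn n/2=\tfrac72$. Finally, balancing these spectral contributions against the Weil-bound transition term through the free parameter $\delta$ completes the argument.
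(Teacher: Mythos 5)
Your strategy is, in all essentials, the paper's own proof: smoothing with $\phi_{a,x,T}$ where $a=\frac{2\pi}{\sqrt 6}\sqrt{n_\psi}$, a Weil bound on the two transition intervals giving $x^{\frac12-\delta+\ep}$, the mixed-sign Kuznetsov formula of Proposition~\ref{Kuz}, the map of Lemma~\ref{lem:level-up} turning $\rho_j(0),\rho_j(n)$ into $b_j(-1),b_j(24n-1)$, and Cauchy--Schwarz against the three coefficient averages over the ranges of Proposition~\ref{prop:int}, with Theorem~\ref{trade} deployed in the window $r_j\asymp a/x$ (producing $n^{\frac{143}{147}}x^{-\frac32}$) and the tail $r_j\ge a/x$ producing $n^{\frac{13}{56}}x^{\frac34\delta+\ep}$. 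However, there is a genuine gap: nothing in your argument controls the bottom of the spectrum. Proposition~\ref{prop:int} bounds $\check\phi(r)$ only for real $r\ge 1$, so your three-range decomposition simply does not cover eigenforms with $r_j<1$, and in particular gives no handle on possible exceptional eigenvalues $\lambda_j<\frac14$ (imaginary $r_j$) or on the minimal eigenvalue $\lambda=\frac3{16}$. Your remark that no cusp is singular rules out Eisenstein series and residual spectrum, but it does \emph{not} rule out exceptional cuspidal eigenvalues. The paper spends real effort here: the Serre--Stark theorem shows that $\lambda=\frac3{16}$ cannot occur in $\sLT_\frac12(2,\psi)$ (the associated holomorphic space is spanned by $\theta(12\tau)$, which has a constant term, contradicting $\alpha_\psi\neq 0$), and then Sarnak's lift to weight $0$ on $\Gamma_0(72)$ combined with LMFDB numerics gives $r_j>\frac12$ for every $j$. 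Only after this can one start the spectral sum at $r_j>0.5$, bound the finitely many terms with $0.5<r_j\le 1$ by $O(1)$, and apply Proposition~\ref{prop:int} to the rest. Without this step the stated estimate cannot be concluded.

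A second, smaller but concrete error: your claim that ``Proposition~\ref{AndDu} is used to absorb the smallest eigenvalues'' fails if taken literally. Using Proposition~\ref{AndDu} pointwise for \emph{both} coefficients in the range $r_j\le a/(8x)$ gives $\sqrt n\,|\rho_j(0)\rho_j(n)|/\ch\pi r_j\ll r_j^2+n^{\frac14}r_j$, and after multiplying by $\check\phi(r_j)\ll r_j^{-\frac32}e^{-r_j/2}$ and summing, a term of size $n^{\frac14}$ survives. For $x\asymp\sqrt n$ with a small implied constant (so that this range is still nonempty) and $\delta$ small, $n^{\frac14}$ exceeds every term on the right-hand side of the proposition, since there $n^{\frac{143}{147}}x^{-\frac32}\asymp n^{\frac{131}{588}}$, $n^{\frac{13}{56}}x^{\frac34\delta}<n^{\frac14}$, and $x^{\frac12-\delta}<n^{\frac14}$. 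The paper instead pairs the pointwise bound $|b_j(-1)|/\sqrt{\ch\pi r_j}\ll r_j^{\frac54}$ from Proposition~\ref{AndDu} with the pointwise bound $\sqrt n\,|b_j(24n-1)|/\sqrt{\ch\pi r_j}\ll n^{\frac3{14}+\ep}r_j^{\frac94}$ coming from Proposition~\ref{avgduke}, which yields $O\!\left(n^{\frac3{14}+\ep}\right)$ for this range and is safely dominated. Your guiding principle of using whichever average is most efficient in each range would eventually force this choice, but the attribution as you wrote it is the wrong one.
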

We show that Proposition \ref{tradesumprop} implies Theorem \ref{tradesum}.
Corollary~\ref{weilpsi} below gives a Weil bound for the individual Kloosterman sums, so 
  the initial segment $1 \leq c \leq n^{\frac{143}{294}}$ contributes $O \(n^{\frac{143}{588}+\ep} \)$ to \eqref{demsum}.
 One can then break the interval $n^{\frac{143}{294}} \leq c \leq X$ into $O(\log X)$ dyadic intervals $x \leq c \leq 2x$ with $n^{\frac{143}{294}} \leq x \leq X/2$ and use  Proposition~\ref{tradesumprop} for each interval.

\begin{proof}[Proof of Proposition~\ref{tradesumprop}]
Let  $\{u_j(\tau)\}$ be an orthonormal basis for  $\sLT_{\frac{1}{2}}(2,\psi)$ with Fourier coefficients 
$\rho_j(n)$ and eigenvalues $\lambda_j=\frac{1}{4}+r_j^2$. 
 Recall that $\alpha_\psi=\frac1{24}$, so that $n_\psi=n-\frac1{24}$, and 
define
\begin{equation}\label{eq:adef}
a:=\frac{2 \pi}{\sqrt 6} \sqrt{n_\psi}.
\end{equation}
Let $\phi=\phi_{a,x,T}:[0,\infty) \rightarrow \R$ be a smooth test function with the properties listed in Section~\ref{Kuznetsov1}.

 Using Corollary~\ref{weilpsi} and recalling the definition \eqref{eq:tdef} of $T$ we obtain
\begin{multline} \label{compare}
\left | \sum_{\substack{c>0 \\ c \equiv 0 \pmod{2} }}  \frac{S(0,n,c,\psi)}{c} \phi \left(\frac{a}{c} \right)-
\sum_{\substack{x \leq c \leq 2x \\ c \equiv 0 \pmod{2} }} \frac{S(0,n,c,\psi)}{c}  \right | \\
\leq \sum_{\substack{x-T \leq c \leq x \\ 2x \leq c \leq 2x+2T }} \frac{|S(0,n,c,\psi)|}{c}
\ll_\delta x^{\frac{1}{2}-\delta} \log x.
\end{multline}
We now estimate the smoothed sums.
 Proposition \ref{Kuz} gives
\begin{equation} \label{Kuzapp}
\sum_{\substack{c>0 \\ c \equiv 0 \pmod{2}}}  \frac{S(0,n,c,\psi)}{c} \phi \( \frac{a}{c} \)= 
\frac{4\sqrt{i}}{\sqrt 6} \sqrt{n_\psi} \sum_{r_j} \frac{\overline{\rho_j(0)} \rho_j(n)}{\ch \pi r_j} \check{\phi}(r_j).
\end{equation}

If  $f(\tau)\in \sLT_\frac12(2, \psi)$ has  minimal eigenvalue $\lambda=\frac{3}{16}$,
then by the discussion in Section~\ref{sec:back} $y^\frac14 f(24\tau)\in M_\frac12\(144, \pfrac{12}\bullet\nu_\theta\)$.
By the Serre-Stark basis theorem \cite{serre-stark},
this space is spanned by $\theta(12\tau)=1+\cdots$.
 In view of the 
Fourier expansion \eqref{eq:f_fourier}  this minimal eigenvalue 
 does not occur, so each form $u_j$ appearing in \eqref{Kuzapp} is cuspidal.
 Applying a lift of Sarnak \cite[\S 3]{sarnak-additive}
such a  form implies the existence of  a  non-zero 
form  in the discrete spectrum in $\mathcal{L}_0(72,\mathbf{1})$ with eigenvalue $4 \lambda_j-\frac34>0$
 (the map $\lambda_j\mapsto 4 \lambda_j-\frac34$ corresponds to the map $r_j \mapsto 2r_j$).  
The image of $u_j$ under this lift must therefore be a cusp form (since the residual spectrum occurs only when $\lambda=0$ for congruence groups).   From the numerical data in the L-series and modular forms database \cite{lmfdb} we see that $2r_j>1.1$.  It follows that   $r_j>\frac12$ for all $j$ in the sum \eqref{Kuzapp}.

After this discussion we   break the  sum \eqref{Kuzapp} 
 into three ranges dictated by the behavior of $\check{\phi}$ in Proposition \ref{prop:int}. These are
\begin{equation} \label{specrange}
0.5 < r_j \leq \frac{a}{8x}, \qquad  \max \(1,\frac{a}{8x}\)<r_j<\frac{a}{x}, \qquad \text{and} \quad r_j \geq \max \(1,\frac{a}{x} \). 
\end{equation}

Let $v_j(\tau):=u_j(24 \tau)$.   After   multiplication by a fixed constant, 
Lemma~\ref{lem:level-up} shows that 
 $\{v_j(\tau)\}$ is an orthonormal subset of 
 $\sLT_{\frac{1}{2}}\(144, \left(\frac{12}{\bullet} \right) \nu_{\theta}\)$.
 Denote the Fourier coefficients of $v_j(\tau)$ by $b_j(n)$, so that
\begin{equation} \label{coeffrel}
\rho_j(n)=b_j(24n-1).
\end{equation}

By a straightforward argument using the Weil bound \eqref{eq:weilbound} for the sums $S\(n,n,c, \left( \frac{12}{\bullet} \right) \nu_{\theta}\)$,  \eqref{eq:adassump} is satisfied with the choice $\beta=\frac{1}{2}+\ep$. Using \eqref{coeffrel} and Proposition~\ref{AndDu} we obtain
\begin{equation} \label{smallrj}
\frac{\rho_j(0)}{\sqrt{\ch \pi r_j}}=\frac{b_j(-1)}{\sqrt{\ch \pi r_j}} \ll r_j^\frac54, 
\end{equation}
while Proposition~\ref{avgduke} and  \eqref{coeffrel} give
\begin{equation} \label{smallrj2}
\frac{\rho_j(n)}{\sqrt{\ch \pi r_j }}= \frac{b_j(24n-1)}{\sqrt{\ch \pi r_j  }} \ll n^{-\frac{2}{7}+\ep} r_j^{\frac{9}{4}}.
\end{equation}
Using Proposition \ref{prop:int}, \eqref{smallrj} and \eqref{smallrj2} we obtain
\begin{equation} \label{bound1}
  \sqrt{n_\psi} \sum_{0.5<r_j\leq \frac{a}{8x}}\left| \frac{\overline{\rho_j(0)} \rho_j(n)}{\ch \pi r_j} \check{\phi}(r_j) \right | \ll n^{\frac{3}{14}+\ep} \( \sum_{0.5<r_j \leq 1} |\check{\phi}(r_j)|  +\sum_{1 \leq r_j \leq \frac{a}{8x}} r_j^{2} e^{-\frac{r_j}{2}} \) \ll n^{\frac{3}{14}+\ep}.
\end{equation}
To obtain the last estimate we use the fact that the  first sum on the right contains only finitely many terms, so it is $O(1)$ by  \eqref{eq:phicheck} and \cite[(10.25.3)]{nist}. 
The second sum is also $O(1)$ by Weyl's law for the multiplier $\psi$ \cite[Theorem 2.28,  p.~414]{hejhal-stf1}.

Note that the results in Section~\ref{sec:ell2} apply to the orthonormal collection $\{v_j(\tau)\}$ by positivity.
Using the Cauchy-Schwarz inequality,  Proposition~\ref{AndDu},  Theorem \ref{trade}, and Proposition \ref{prop:int}, we obtain 
\begin{multline} \label{bound2}
\sqrt{n_\psi} \sum_{\frac a{8x} \leq r_j<\frac a{x}} \left | \frac{\overline{\rho_j(0)} \rho_j(n)}{\ch \pi r_j} \check{\phi}(r_j) \right | 
\ll \left( \sum_{\frac a{8x} \leq r_j<\frac a{x}} \frac{|b_j(-1)|^2}{\ch \pi r_j}   \right)^{\frac{1}{2}} \\
\times \left( n_\psi \sum_{\frac a{8x} \leq r_j <\frac a{x}} \frac{|b_j(24n-1)|^2}{\ch \pi r_j} |\check{\phi}(r_j)|^2   \right)^{\frac{1}{2}}
 \ll n^{\frac{131}{588}+\ep} \left( \frac{a}{x} \right)^{\frac{3}{2}} \ll n^{\frac{143}{147}+\ep} x^{-\frac{3}{2}}.
\end{multline}

We record two estimates which are required for the   range $r_j\geq \frac ax$.
 Proposition~\ref{AndDu} gives
\begin{multline} \label{CR1}
\sqrt{n_\psi} \sum_{0 \leq r_j \leq X} \left | \frac{\overline{\rho_j(0)} \rho_j(n)}{\ch \pi r_j} \right | \ll \left( \sum_{0 \leq r_j \leq X} \frac{|b_j(-1)|^2}{\ch\pi r_j} \right)^{\frac{1}{2}} \left( n_\psi \sum_{0 \leq r_j \leq X} \frac{|b_j(24n-1)|^2}{\ch\pi r_j} \right)^{\frac{1}{2}}\\
  \ll\( X^{2}+n^{\frac{1}{4}+\ep} X\)\log X,
\end{multline} 
 while the same argument using Proposition~\ref{AndDu} and Proposition \ref{avgduke} together gives 
\begin{equation} \label{CR2}
\sqrt{n_\psi} \sum_{0 \leq r_j \leq X} \left | \frac{\overline{\rho_j(0)} \rho_j(n)}{\ch \pi r_j} \right |
 \ll \(X^{\frac{7}{2}} n^{\frac{3}{14}+\ep}\)\log X. 
\end{equation}

Let $A \geq \max \(\frac ax,1\)$ and consider the dyadic range $A \leq r_j \leq 2A$. Using the Cauchy--Schwarz inequality,  \eqref{CR1}, \eqref{CR2}
and Proposition \ref{prop:int} we obtain 
\begin{align*}
\sqrt{n_\psi} \sum_{A \leq r_j \leq 2A} \left | \frac{\overline{\rho_j(0)} \rho_j(n)}{\ch \pi r_j} \check{\phi}(r_j) \right | & 
\ll \min \left( A^{-\frac{3}{2}}, A^{-\frac{5}{2}} \frac{x}{T} \right)  \cdot 
\min \left(A^2+n^{\frac{1}{4}+\ep} A, n^{\frac{3}{14}+\ep} A^{\frac{7}{2}}   \right) \log A   \\
& \ll \min \left(A^{\frac{1}{2}}, A^{-\frac{1}{2}} \frac{x}{T}  \right) \cdot 
\min \left(1+n^{\frac{1}{4}+\ep} A^{-1}, n^{\frac{3}{14}+\ep} A^{\frac{3}{2}}  \right)\log A \\
& \ll  n^{\frac{13}{56}+\ep} \min  \( A^{\frac{3}{4}}, A^{-\frac{1}{4}} \frac{x}{T} \)\log A, 
\end{align*}
 where we have used
 \begin{equation*}
 \min(B+C,D) \leq \min(B,D)+\min(C,D) \qquad \text{and} \qquad  \min(B,C) \leq \sqrt{BC}
 \end{equation*}
for positive $B,C$ and $D$. Summing over dyadic integrals yields 
\begin{equation} \label{bound3}
\sqrt{n_\psi} \sum_{r_j \geq  \max (a/x,1)} \left | \frac{\overline{\rho_j(0)} \rho_j(n)}{\ch(\pi r_j)} \check{\phi}(r_j) \right | \ll  n^{\frac{13}{56}+\ep} \( \frac{x}{T} \)^{\frac{3}{4}+\ep} \ll_\delta n^{\frac{13}{56}+\ep} x^{\frac{3}{4} \delta+\ep}.
\end{equation}
Proposition~\ref{tradesumprop} follows from  \eqref{compare}, \eqref{bound1} \eqref{bound2} and \eqref{bound3}.
\end{proof}

We need a simple lemma before proving Theorem \ref{thm:powersave}.  By \cite[(10.30.1)]{nist}, 
 for fixed $\nu,M>0$ and $0 \leq z \leq M$ we have 
\begin{equation} \label{Ibound}
I_{\nu}(z) \ll_{\nu,M} z^{\nu}.
\end{equation}

\begin{lemma} \label{Ichain}
Suppose that $b, \beta>0$. Then for $t/b \geq \beta$ we have 
\begin{equation*}
I_{\frac{1}{2}} \(\mfrac{b}{t} \) \ll_{\beta} \(\mfrac{b}{t} \)^{\frac{1}{2}} \qquad \text{and} \qquad \(I_{\frac{1}{2}} \( \mfrac{b}{t} \) \)^{\prime} \ll_{\beta} b^{\frac{5}{2}} t^{-\frac{7}{2}}+b^{\frac{1}{2}} t^{-\frac{3}{2}}.
\end{equation*}
\end{lemma}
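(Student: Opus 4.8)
The plan is to reduce both estimates to the small-argument behaviour of the modified Bessel function, exploiting the fact that the hypothesis $t/b\ge\beta$ forces the argument $z:=b/t$ into the bounded range $0\le z\le 1/\beta=:M$. With this observation the first inequality is immediate: applying \eqref{Ibound} with $\nu=\tfrac12$ and this value of $M$ gives $I_{\frac12}(b/t)\ll_\beta (b/t)^{\frac12}$, since $0\le b/t\le 1/\beta$.

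For the derivative (taken in $t$, with $b$ held fixed, which is the form needed for the partial summation in the proof of Theorem~\ref{thm:powersave}) I would apply the chain rule to write
\[
\left(I_{\frac12}\left(\tfrac bt\right)\right)'=-\frac b{t^2}\,I_{\frac12}'\left(\tfrac bt\right),
\]
and then use the standard recurrence $I_{\frac12}'(z)=\tfrac12\bigl(I_{-\frac12}(z)+I_{\frac12+1}(z)\bigr)$ from \cite[(10.29.2)]{nist} to split the derivative into two pieces of opposite sign of order. The piece $I_{\frac32}(z)$ is controlled directly by \eqref{Ibound} as $I_{\frac32}(z)\ll_\beta z^{\frac32}$, while $I_{-\frac12}(z)\ll_\beta z^{-\frac12}$ on $0<z\le M$ follows from the leading term of its power series, i.e.\ the asymptotic $I_\nu(z)\sim (z/2)^\nu/\Gamma(\nu+1)$ in \cite[(10.30.1)]{nist} at $\nu=-\tfrac12$ (equivalently from the closed form $I_{-\frac12}(z)=\sqrt{2/(\pi z)}\cosh z$), together with continuity on the compact interval. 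Combining the two bounds yields $I_{\frac12}'(b/t)\ll_\beta (t/b)^{\frac12}+(b/t)^{\frac32}$, and multiplying by $b/t^2$ produces exactly the two claimed terms $b^{\frac12}t^{-\frac32}$ and $b^{\frac52}t^{-\frac72}$.

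The computation is otherwise routine; the single point requiring care is that \eqref{Ibound} is stated only for positive order $\nu$, so it does not literally cover $I_{-\frac12}$. This is the one place I would be explicit, and the remedy above via \cite[(10.30.1)]{nist} (valid for $\nu=-\tfrac12$ since $-\tfrac12$ is not a negative integer) or via the closed form supplies the needed $z^{-\frac12}$ bound near the origin. I note finally that in the regime $t\ge\beta b$ one has $b^{\frac52}t^{-\frac72}=b^{\frac12}t^{-\frac32}(b/t)^2\ll_\beta b^{\frac12}t^{-\frac32}$, so the second term is in fact dominated by the first; I would nevertheless keep both terms as stated, since that is the form in which the lemma is subsequently applied.
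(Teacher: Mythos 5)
Correct, and essentially the paper's own proof: both arguments combine the chain rule, a standard Bessel recurrence, and the small-argument bound \eqref{Ibound} on the bounded range $0\le b/t\le 1/\beta$. The only deviation is the choice of recurrence — the paper uses $I_{1/2}'(z)=I_{3/2}(z)+\tfrac{1}{2z}I_{1/2}(z)$, which keeps every order positive so that \eqref{Ibound} applies verbatim, whereas your half-sum recurrence introduces $I_{-1/2}$, a point you correctly flag and repair via the closed form $I_{-1/2}(z)=\sqrt{2/(\pi z)}\cosh z$.
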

\begin{proof}
The first inequality follows directly from \eqref{Ibound}. From the identity
\begin{equation*}
I_{\frac{1}{2}}^{\prime}(t)=I_{\frac{3}{2}}(t)+\frac{1}{2t} I_{\frac{1}{2}}(t)
\end{equation*}
we obtain
\begin{equation} \label{chain}
\(I_{\frac{1}{2}} \( \mfrac{b}{t} \) \)^{\prime}=-\frac{b}{t^2} \left(I_{\frac{3}{2}} \(\mfrac{b}{t} \)+\frac{t}{2b} I_{\frac{1}{2}} \(\mfrac{b}{t} \)  \right),
\end{equation}
and the second bound follows.
\end{proof}

\begin{proof}[Proof of Theorem \ref{thm:powersave}]
From Section~\ref{sec:fq} we have
\begin{equation} \label{remainderterm}
\bar{R(n,N)}=\frac{2 \pi}{(24n-1)^{\frac{1}{4}}} e \( -\mfrac{1}{8} \) \sum_{\substack{c>2N \\ c \equiv 0 \pmod{2}}} \frac{S(0,n,c,\psi)}{c} I_{\frac{1}{2}} \left( \frac{b}{c} \right),
\end{equation}
where
\begin{equation*}
b:=\frac{\pi \sqrt{24n-1}}{6}.
\end{equation*}
Let 
\begin{equation*}
S(n,X):=\sum_{\substack{c \leq X \\ c \equiv 0 \pmod{2} }} \frac{S(0,n,c,\psi)}{c}.
\end{equation*}

By partial summation we have 
\begin{equation} \label{partialsum}
\sum_{\substack{c>2N \\ c \equiv 0 \pmod{2}}} \frac{S(0,n,c,\psi)}{c} I_{\frac{1}{2}} \left( \mfrac{b}{c} \right)=S(n,2N) I_{\frac{1}{2}} \(\mfrac{b}{2N} \)-\int_{2N}^{\infty} S(n,t) \(I_{\frac{1}{2}} \(\mfrac{b}{t} \) \)^{\prime} dt.
 \end{equation}
Fix $\gamma>0$ and set  $N:=\gamma \sqrt{n}$. Theorem \ref{tradesum} and Lemma \ref{Ichain} imply that 
 \begin{equation*} 
 S(n,2N) I_{\frac{1}{2}} \(\mfrac{b}{2N} \) \ll_{\gamma,\delta} \(n^{\frac{13}{56}+\frac{3}{8} \delta}+n^{\frac{143}{588}}+n^{\frac{1}{4}- \frac{1}{2} \delta} \) n^{\ep}.
 \end{equation*}
 We choose $\delta=\frac{1}{49}$ to obtain 
 \begin{equation} \label{nbound}
 S(n,N) I_{\frac{1}{2}} \(\mfrac{b}{2N} \) \ll_{\gamma} n^{\frac{143}{588}+\ep}.
 \end{equation}
 With this choice of $\delta$, the integral in \eqref{partialsum} also satisfies the bound  \eqref{nbound}.  
 By  \eqref{remainderterm} we have
  \begin{equation*}
 R(n,N) \ll_{\gamma} n^{-\frac{1}{147}+\ep},
 \end{equation*}
 and Theorem~\ref{thm:powersave} is proved.
\end{proof}

\section{Proof of Theorem~\ref{thm:pofn}} \label{sec:pofn}
One can follow  the argument in \cite[\S 9--\S10]{ahlgren-andersen}, using  a modification of the estimate 
(9.11) which deals with intermediate values of the spectral parameter.
 Let $\{u_j(\tau)\}$ be an orthonormal basis for  $\tilde\sL_{\frac{1}{2}}(1,\nu_{\eta})$ with 
  spectral parameters $r_j$ and Fourier coefficients $\rho_j(m)$. Here each  $u_j$  is cuspidal and $r_j>1$ for all $j$ \cite[Corollary~5.3]{ahlgren-andersen}.
Let  $v_j(\tau):=u_j(24 \tau) \in \mathcal{L}_{\frac{1}{2}}\(576, (\frac{12}{\bullet}) \nu_{\theta}\)$;  
after scaling by a fixed constant,  $\{v_j\}$ is an orthonormal set.
If the coefficients are denoted by  $b_j(n)$, then we have
\begin{equation} \label{coeffrel2}
\rho_j(n)=b_j(24n-23).
\end{equation}

Suppose that $24n-23$ is  negative and squarefree.  We have $n_\eta=n-\frac{23}{24}$.
Arguing as in \eqref{bound2} using Theorem~\ref{trade}
we obtain
\begin{multline*}
\sqrt{|n_{\eta}|} \sum_{\frac{a}{8x}<r_j<\frac{a}{x}} \left | \frac{\overline{\rho_j(1)} \rho_j(n)}{\text{ch} \pi r_j } \check{\phi}(r_j)  \right | \ll \left( \sum_{\frac{a}{8x}<r_j<\frac{a}{x}} \frac{|b_j(1)|^2}{\text{ch} \pi r_j} \right)^{\frac{1}{2}}  \\
\times \left( |n_{\eta}| \sum_{\frac{a}{8x}<r_j<\frac{a}{x}} \frac{|b_j(24n-23)|^2}{\text{ch} \pi r_j } |\check{\phi}(r_j)|^2 \right)^{\frac{1}{2}} \ll n^{\frac{143}{147}+\ep} x^{-\frac{3}{2}}.
\end{multline*}
Following the argument  in \cite{ahlgren-andersen} with this estimate  and making the choice $\delta=\frac{1}{49}$
yields Theorem~\ref{thm:pofn}. 

\section{Character sums and the proof of Theorem~\ref{thm:alg}}\label{sec:heegnerconvert}
To translate the error estimates of Theorem~\ref{thm:powersave} to the setting of Theorem~\ref{thm:alg} 
requires a reinterpretation of the Kloosterman sums $S(0, n, 2c, \psi)$ in terms of Weyl-type sums.
To set notation, for $c \in \mathbb{N}$, define
\begin{equation} \label{Fcdef}
F_c(n):= \sum_{\substack{x \pmod{24c} \\ x^2 \equiv 1-24n \pmod{24c}}} \(\frac{-12}{x}  \) e \( \frac{x}{12c} \).
\end{equation}
Then we have 
\begin{proposition}\label{lem:fkmk}
If $c$ is odd then $F_c(n)=0$.  Furthermore we have
\begin{equation}\label{claim0}
F_{2c}(n)=\sqrt{\frac{24}c} e\(-\mfrac18\)\,\, \bar{S(0, n, 2c, \psi)}.
\end{equation}
\end{proposition}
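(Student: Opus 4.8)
The plan is to prove a single identity expressing the Weyl sum $F_{2c}(n)$ directly as a fixed multiple of the Kloosterman sum $\overline{S(0,n,2c,\psi)}$, and then to dispose of the odd case by an elementary cancellation. First I would expand $\overline{S(0,n,2c,\psi)}$ from the definition \eqref{eq:kloos_def}, recalling from Section~\ref{sec:fq} that $\alpha_\psi=\tfrac1{24}$, so that the arguments entering the sum are $m_\psi=-\tfrac1{24}$ and $n_\psi=n-\tfrac1{24}$. I would then insert the explicit shape of $\psi$ from \eqref{eq:psidef} together with Rademacher's formula \eqref{eq:chi-dedekind-sum} for $\nu_\eta$ at width $2c$. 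The factor $\sqrt{-i}=e(-\tfrac18)$ sitting inside \eqref{eq:chi-dedekind-sum} is exactly the eighth root of unity appearing in \eqref{claim0}, so this substitution already isolates the correct unimodular constant; the remaining work is to match the factor $\sqrt{24/c}$ and the two exponential pieces.

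The computational core is to linearize the Dedekind sum and complete the square. Writing $a\equiv \bar d\pmod{2c}$ and applying the reciprocity law for $s(d,2c)$ (or, alternatively, replacing $\nu_\eta$ by the closed form \eqref{eq:chi-kronecker-symbol} to avoid reciprocity altogether) converts the phase $e^{-\pi i s(d,2c)}\,e\!\left(\tfrac{a+d}{48c}\right)$ into a genuine additive character whose argument is a quadratic polynomial in the single residue $d$. Completing the square in that polynomial produces a quadratic Gauss sum contributing the normalizing factor $\sqrt{24/c}$, and recasts the sum as one over residues $x\pmod{48c}$ subject to $x^2\equiv1-24n\pmod{48c}$, carrying the Kronecker weight $\left(\tfrac{-12}{x}\right)$ and the twist $e\!\left(\tfrac{x}{24c}\right)$. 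By \eqref{Fcdef} (with $c$ replaced by $2c$) this is exactly $F_{2c}(n)$. I expect it to be cleanest to organize the matching by a case analysis on $c\bmod4$, mirroring the two branches of $\psi$ in \eqref{eq:psidef} and the proof of Lemma~\ref{lem:fq-kloos}; one may equivalently pass through the theta multiplier using Lemma~\ref{lem:level-up}, where $\overline{S}$ becomes a classical Salié sum with a known square-root evaluation.

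For the odd case I would argue directly on $F_c(n)$. Every solution $x$ of $x^2\equiv1-24n\pmod{24c}$ is odd and coprime to $6$, and for such $x$ the Kronecker factor simplifies to $\left(\tfrac{-12}{x}\right)=\left(\tfrac{x}{3}\right)$, depending only on $x\bmod3$. I claim the shift $x\mapsto x+6c$ permutes the solution set: indeed $(x+6c)^2-x^2=12c(x+3c)$ is divisible by $24c$ because $x+3c$ is even when $c$ (hence $x$) is odd. This shift fixes $\left(\tfrac{x}{3}\right)$, since $6c\equiv0\pmod3$, but negates the additive twist, since $\tfrac{6c}{12c}=\tfrac12$; thus the terms attached to $x$ and to $x+6c$ are equal and opposite, and pairing them gives $F_c(n)=0$. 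This cancellation breaks down for even $c$ (then $x\not\equiv c\pmod2$ and the shift no longer preserves the congruence), consistent with the nonvanishing formula \eqref{claim0}.

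The main obstacle will be the bookkeeping in the completing-the-square step: one must track the eighth roots of unity coming from $\sqrt{-i}$ together with the sign and modulus of the quadratic Gauss sum precisely enough to land on the exact constant $\sqrt{24/c}\,e(-\tfrac18)$, rather than an unwanted unit multiple. The delicate arithmetic is the interaction of the fractional shift $n_\psi=n-\tfrac1{24}$ with the level: it is what forces the congruence modulus to emerge as $48c$ with the quadratic constraint $x^2\equiv1-24n$, and it is precisely the point where the $\tau\mapsto24\tau$ structure underlying Lemma~\ref{lem:level-up} makes itself felt.
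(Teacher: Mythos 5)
Your treatment of the odd case is exactly the paper's: the shift $x\mapsto x+6c$ preserves the solution set precisely because $x+3c$ is even when $c$ is odd, fixes the character value, and flips the sign of $e\pfrac{x}{12c}$, so the sum equals its own negative. For the identity \eqref{claim0}, however, you take a genuinely different route. The paper never manipulates the multiplier $\psi$ at this stage: it first invokes Lemma~\ref{lem:fq-kloos} to replace $e\pfrac{1}{8}\,\bar{S(0,n,2c,\psi)}$ by $(-1)^{\lfloor\frac{c+1}{2}\rfloor}A_{2c}\(n-\frac{c(1+(-1)^c)}{4}\)$, then quotes the Selberg--Whiteman evaluation of $A_c(n)$ as a sum over solutions of $x^2\equiv 1-24n\pmod{24c}$, so that both sides of \eqref{claim0} become explicit constrained character sums $F_{2c}(n)$ and $M_{2c}(n)$. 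Since both are periodic in $n$ with period $2c$, the identity is checked on Fourier transforms $\hat F_{2c}(h)$ and $\hat M_{2c}(h)$: averaging over $n$ removes the quadratic constraint, expanding $\pfrac{\pm 12}{x}$ into additive characters via \eqref{id1} converts each transform into a combination of \emph{complete} Gauss sums $\G(-h,b,48c)$, and these are evaluated by \eqref{eval} and compared case by case in $(h,48c)$ and in $c$ modulo $4$ and $3$. You instead propose to expand $\bar{S(0,n,2c,\psi)}$ from its definition, linearize the Dedekind sum through \eqref{eq:chi-kronecker-symbol}, and carry out the Sali\'e-type square completion directly; this amounts to re-proving a Selberg--Whiteman-type evaluation for $\psi$ rather than citing one --- essentially Whiteman's original computation adapted to this multiplier. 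Both routes are sound. The paper's buys complete (hence cleanly evaluable) Gauss sums and quarantines all multiplier bookkeeping in the short case-check of Lemma~\ref{lem:fq-kloos} plus a citation, at the cost of the case analysis in $h$; yours is more self-contained and conceptually direct, but the hardest bookkeeping (the substitution $b=(ad-1)/2c$ and the eighth roots of unity) falls entirely on you. Note in particular that the constant $e\(-\frac{1}{8}\)$ will not come from the $\sqrt{-i}$ in \eqref{eq:chi-dedekind-sum} alone, as your first paragraph suggests: the Gauss-sum evaluation contributes further unit factors of the shape $(1+i)\ep_d^{-1}$, and only their product with the conjugated $\sqrt{-i}$ collapses to the stated constant, so this matching is a computation to be done, not a formality.
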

As a consequence we obtain an improved  Weil-type bound for the Kloosterman sums  $S(0, n, 2c, \psi)$.
Computations suggest that this bound  is sharp.
 For example, when $c=15552$ and $n=8278$, the ratio of the left   side to the right is $0.99992\dots$.  
\begin{corollary} \label{weilpsi}
We have 
\begin{equation*}
\left|S(0,n,2c,\psi)\right|\leq 2^{\omega_o(c)} \sqrt{\frac{2c}{(3,c)}},
\end{equation*}
where $\omega_o(c)$ is the number of distinct odd prime divisors of $c$.
\end{corollary}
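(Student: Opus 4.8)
The plan is to read everything off the Weyl-sum identity of Proposition~\ref{lem:fkmk}. Taking absolute values in \eqref{claim0} and using $|e(-\tfrac18)|=1$ together with $|\overline{S(0,n,2c,\psi)}|=|S(0,n,2c,\psi)|$ gives
\[
|S(0,n,2c,\psi)|=\sqrt{\tfrac{c}{24}}\,\bigl|F_{2c}(n)\bigr|,
\]
so the corollary is equivalent to the clean estimate $|F_{2c}(n)|\le 2^{\omega_o(c)}\sqrt{48/(3,c)}$. Here $F_{2c}(n)$ is the sum of $\chi_{-12}(x)\,e(x/24c)$ over the roots of $x^2\equiv D\pmod{48c}$, where $D:=1-24n\equiv 1\pmod{24}$ is coprime to $6$ (and, in all of our applications, squarefree).

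First I would factor $F_{2c}(n)$ multiplicatively. Writing $48c=2^{4+\alpha}3^{1+\beta}m$ with $c=2^{\alpha}3^{\beta}m$ and $(m,6)=1$, the Chinese Remainder Theorem applied to the congruence, together with the standard coprime splitting of the additive character $e(x/24c)$ and the factorization of the period-$12$ character $\chi_{-12}$ through the primes $2$ and $3$, writes $F_{2c}(n)$ as a product of local sums, one for each prime power exactly dividing $48c$. For an odd prime $p\mid m$ the relation $p^2\nmid D$ forces $x^2\equiv D\pmod{p^{v_p(c)}}$ to have at most two roots, so the local factor has modulus at most $2$; the product over these primes contributes exactly $2^{\#\{p\mid m\}}$. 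Since $\omega_o(c)=\#\{p\mid m\}+[3\mid c]$, the one remaining unit of the exponent is carried by the prime $3$, which is precisely the source of the denominator $(3,c)$.

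The crux is the exact evaluation of the two remaining local factors, at $p=2$ and $p=3$. These are genuine quadratic Gauss sums rather than mere root counts: the four roots modulo $2^{4+\alpha}$ reduce to two values modulo the $2$-part $2^{3+\alpha}$ of the exponential's modulus (and similarly at $3$), so the additive character and $\chi_{-12}$ collapse each local sum to a bounded expression whose modulus is \emph{independent of the exponents} $\alpha,\beta$. Evaluating these collapsed Gauss sums produces exactly $\sqrt{48/(3,c)}$, the case $3\mid c$ differing from $3\nmid c$ by whether the extra factor of $3$ in $48c$ is absorbed into $c$ or contributes a separate $\sqrt3$. Multiplying the local bounds gives the claimed estimate for $|F_{2c}(n)|$, and combining with the first display finishes the proof; the near-equality $0.99992\ldots$ recorded after the statement confirms that these $2$-adic and $3$-adic evaluations are sharp. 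The main obstacle is exactly this pair of local computations: one must track the square-root cancellation coming from the mismatch between the modulus $48c$ of the congruence and the smaller effective periods $24c$ and $12$ of the two characters, and verify that it yields the precise constants above uniformly in $\alpha$ and $\beta$.
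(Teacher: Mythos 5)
Your reduction is correct: by \eqref{claim0} the corollary is equivalent to $|F_{2c}(n)|\le 2^{\omega_o(c)}\sqrt{48/(3,c)}$, and factoring $F_{2c}(n)$ by CRT into local sums at $2$, at $3$, and at the primes $p\mid m$ is a viable route. But the step you call the crux is false as stated. The $2$-adic local factor does \emph{not} have modulus independent of $\alpha$, and it does not evaluate exactly to anything: since $D\equiv 1\pmod 8$, the four roots mod $2^{4+\alpha}$ are $\pm x_0,\ \pm x_0+2^{3+\alpha}$, they collapse to the two classes $\pm x_0$ modulo $2^{3+\alpha}$, and the local factor equals $4\cos\bigl(2\pi Ax_0/2^{3+\alpha}\bigr)$ with $Ax_0$ odd. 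For $\alpha=0$ this has modulus exactly $2\sqrt2$; for large $\alpha$ it can be arbitrarily close to $4$. So only the inequality $\le 4$ holds, and it is attained only asymptotically — which is what the numerical ratio $0.99992\ldots$ at $c=2^6\cdot3^5$ reflects; it does not certify an exact evaluation. The parenthetical ``(and similarly at $3$)'' is also wrong: there is no collapse at $3$, because the $3$-part of $24c$ equals the $3$-part of $48c$ (both are $3^{1+\beta}$). Instead the prime $3$ behaves in two different ways. When $3\nmid c$ the $3$-adic factor is the full Gauss sum $\sum_{x\,(3)}\pfrac x3 e(Bx/3)=\pm i\sqrt3$ (the roots of $x^2\equiv D\pmod 3$ are all units mod $3$), of modulus exactly $\sqrt3$ — this is the only genuine square-root cancellation in the whole argument. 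When $3\mid c$ the $3$-adic factor is $\pm\pfrac{x_0}{3}\,2i\sin\bigl(2\pi Bx_0/3^{1+\beta}\bigr)$, which one bounds trivially by $2$, and that trivial $2$ is exactly the extra unit of the exponent in $2^{\omega_o(c)}=2^{\#\{p\mid m\}+1}$.

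Once you replace ``exact evaluation'' by these upper bounds, your proof closes: $2^{\#\{p\mid m\}}$ from the odd primes dividing $m$ (this root count, like the paper's, tacitly needs $p^2\nmid 24n-1$, e.g.\ $24n-1$ squarefree), times $4$ at the prime $2$, times $\sqrt3$ (if $3\nmid c$) or $2$ (if $3\mid c$) at the prime $3$, gives precisely $2^{\omega_o(c)}\sqrt{48/(3,c)}$. For comparison, the paper's proof is shorter and avoids all Gauss-sum evaluations: when $3\mid c$ it only counts roots of $x^2\equiv 1-24n\pmod{48c}$ (at most four at $p=2$, at most two at each odd prime), which already suffices because $\sqrt{48/3}=4$; when $3\nmid c$ it invokes Lehmer's bound \eqref{eq:lehmer_bound} through Lemma~\ref{lem:fq-kloos}, since that bound coincides with the claim when $(3,c)=1$. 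In effect your argument re-proves the needed case of Lehmer's theorem — the cancellation at the prime $3$ — where the paper simply cites it; that is a legitimate, more self-contained alternative, but only after the exactness claims are downgraded to inequalities as above.
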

\begin{proof} 
The congruence $x^2 \equiv 1-24n \pmod{p^\ell}$ has at most four solutions if $p=2$ and at most two solutions otherwise.
 The  claim follows from Proposition~~\ref{lem:fkmk} when $3\mid c$ and Lehmer's bound \eqref{eq:lehmer_bound}, which is stronger when $3\nmid c$.
\end{proof}

Before turning to the proof of Proposition~\ref{lem:fkmk} we 
recall the definition of the Gauss sum
\begin{equation*}
\G(a,b,c):= \sum_{x \pmod{c}} e \( \frac{ax^2+bx}{c} \) \quad c>0, \quad a,b \in \mathbb{Z} \quad \text{and} \quad a \neq 0.
\end{equation*}
For any $d \mid (a,c)$, replacing $x$ with $x+c/d$ gives
\begin{equation} \label{trans}
\G(a,b,c)=e \( \frac{b}{d} \)  \G(a,b,c).
\end{equation}
So $\G(a,b,c)=0$ unless $d \mid b$, in which  case we have 
\begin{equation} \label{factor}
\G(a,b,c)=d \cdot \G \( \frac{a}{d},\frac{b}{d},\frac{c}{d} \).
\end{equation}
If $4 \mid c$ and $(a,c)=1$, then replacing $x$ by $x+c/2$ shows that $\G(a,b,c)=0$ if $b$ is odd. 
These facts together with standard evaluations \cite[\S1.5]{berndt-evans-williams} lead to the formulas
\begin{equation} \label{eval}
\G(a,b,c)=\begin{cases}
e \({\frac{-\overline{a} b^2}{4c}} \)(1+i) \ep_{a}^{-1} \sqrt{c} \( \frac{c}{a} \) & \text{if } b \text{ is even and } 4 \mid c,  \\
e \({\frac{-\overline{4a} b^2}{c}} \) \ep_{c} \sqrt{c} \( \frac{a}{c} \) & \text{if } c \text{ is odd}, \\
0  & \text{if } b \text{ is odd and } 4 \mid c.
\end{cases}
\end{equation}

\begin{proof}[Proof of Proposition~\ref{lem:fkmk}]
For convenience, set
\[D_n:=24n-1.\]
Suppose that $c$ is odd.  If $x$ is a solution to $x^2 \equiv -D_n \pmod{24c}$ then $x+6c$ is also a solution, and 
the corresponding terms have opposite sign.  Therefore $F_c(n)=0$ in this case, and it suffices by Lemma~\ref{lem:fq-kloos} to prove that
\begin{equation} \label{claim}
F_{2c}(n)=\sqrt{\frac{24}c} e\(-\mfrac14\)(-1)^{\lfloor \frac{c+1}{2} \rfloor}A_{2c} \(n-\frac{c(1+(-1)^c)}{4} \).
\end{equation}

By work of Selberg and Whiteman \cite{whiteman} we have 
\begin{equation*}
A_{c}(n):=  \sqrt{\frac{c}{48}} \sum_{ \substack{x \pmod{24c} \\ x^2 \equiv -D_n\pmod {24c} }}  \(\frac{12}{x} \) e \( \frac{x}{12c} \).
\end{equation*}
 For convenience we define
\[
 M_{2c}(n):=A_{2c} \(n-\frac{c \(1+(-1)^c \)}{4} \),
\]
so that
\begin{equation*}
M_{2c}(n)=\sqrt{\frac{c}{24}} \!\!\!\!\!\!\!\!\!
\sum_{\substack{x \pmod{48c} \\ x^2 \equiv -D_n+6 c(1+(-1)^c) \pmod{48c}  }} 
\!\!\!\!\!\!\!\!\!\ \pfrac{12}{x} e \pfrac{x}{12c}.
\end{equation*}
We will make use of the following identities for $x \in \mathbb{Z}$:
\begin{equation}\label{id1}
\begin{aligned}
\pfrac{-12}{x} &=\frac{i}{2 \sqrt{3}} \( e \({-\frac{x}{6}} \)-e \(\frac{x}{6} \)
-e \( \frac{x}{3} \)+e \( {-\frac{x}{3} }\)   \),    \\
\pfrac{12}{x} &=\frac{1}{2 \sqrt{3}} \( e \({\frac{x}{12}} \)-e \(\frac{5x}{12} \)
-e \( {-\frac{5x}{12} }\)+e \( {-\frac{x}{12} }\)   \). 
\end{aligned}
\end{equation}
Since  $F_{2c}$ and $M_{2c}$ each have  period $2c$ in $n$, it suffices to establish the identity 
for the Fourier transforms $\hat F_{2c}$ and $\hat M_{2c}$. We have
\begin{equation*}
\hat F_{2c}(h)=\frac{1}{2c}\sum_{n\pmod{2c}}F_{2c}(n)e\pfrac{hn}{2c}=\frac{1}{2c} \sum_{\substack{x \pmod{48c}}} \(\frac{-12}{x}  \) e \( \frac{x}{24c} \) e \( \frac{h(1-x^2)}{48c} \), 
\end{equation*}
and using \eqref{id1}  we obtain
\begin{multline} \label{fgauss1}
\hat F_{2c}(h)=\frac{i}{4 \sqrt3 c} e \(\frac{h}{48c} \) \Big( \G (-h,2(1-4c),48c )-\G (-h,2(1+4c),48c ) \\
-\G (-h,2(1+8c),48c)+\G (-h,2(1-8c),48c )  \Big).
\end{multline}
Similarly we have  
\begin{equation*}
\hat M_{2c}(h)=\frac{1}{4 \sqrt{6c}} e \pfrac{h \(1+(-1)^c \) }{8} \sum_{x \pmod{48c}} \(\frac{12}{x}  \) e \( \frac{x}{24c} \) e \( \frac{h(1-x^2)}{48c} \),
\end{equation*}
from which 
 \begin{multline} \label{fgauss2}
\hat M_{2c}(h)=\frac1{24 \sqrt{2c}} e \(\frac{h}{48c} + \mfrac{h \(1+(-1)^c \)}{8} \) \Big(\G (-h,2(1+2c),48c ) \\
-\G (-h,2(1+10c),48c)-\G (-h,2(1-10c),48c)+\G (-h,2(1-2c),48c) \Big). 
  \end{multline}
  
Suppose that  $(h,48c)=1$ and define $\hbar$ by  $h\hbar\equiv 1\pmod{48c}$.  Using \eqref{eval} we find that
\[
\hat F_{2c}(h)= \frac{i(1+i)}{\sqrt{c}}  
e \(\frac{h+\hbar}{48c}+\frac{\hbar c}{3}\)   \pfrac{48c}{-h} \ep_{-h}^{-1}  
\( e\pfrac{- \hbar}6   - e\pfrac\hbar6  - e\pfrac\hbar3  + e\pfrac{-\hbar}3 \)
\]
and that 
\begin{multline*}
\hat M_{2c}(h)= \frac{1+i}{\sqrt{24}}
e \(\frac{h+\hbar}{48c}+\frac{\hbar c}{12}\)e\pfrac\hbar4 e\pfrac{h(1+(-1)^c)}8   \pfrac{48c}{-h} \ep_{-h}^{-1}  \\
\cdot \( e\pfrac{- \hbar}6   - e\pfrac\hbar6  - e\pfrac\hbar3  + e\pfrac{-\hbar}3 \).
\end{multline*}
The result follows when  $(h, 48c)=1$  by considering cases $c\pmod 4$.

Note that  each of the Gauss sums $\G(-h, b, 48c)$ appearing in \eqref{fgauss1} and \eqref{fgauss2} has
$(b, 48c)=2$ or $6$.  If $(h, 48c)=2$ or $6$ then $\hat F_{2c}(h)=\hat M_c(h)=0$ by \eqref{eval}.
This leaves   only     the case when $(h, 48c)=3$.
Suppose that this is the case and that $c\equiv 1\pmod 3$.
 Setting $h'=h/3$, we obtain  
\begin{align*} 
\hat F_{2c}(h)&=\frac{3i}{4 \sqrt{3} c} e \pfrac{h}{48c} 
\( \G (-h', \tfrac{2(1-4c)}{3},16c)-\G (-h',\tfrac{2(1+8c)}{3},16c) \)\\
&=\frac{2\sqrt 3 i}{\sqrt c}(1+i)\ep_{-h'}^{-1}\pfrac{16c}{-h'}
e\(\frac h{48c}+\frac{\hbar'}{144c}\)e\pfrac{\hbar'(2c-1)}{18}
\end{align*}
 and 
 \begin{multline*} 
\hat M_{2c}(h)=\frac{1}{8 \sqrt{2 c}}\, e \(\frac{h}{48c}+\mfrac{h \(1+(-1)^c \)}{8} \) 
 \( \G (-h', \tfrac{2(1+2c)}{3},16c)-\G (-h', \tfrac{2(1-10c)}{3},16c) \)\\
 =\frac1{\sqrt2}(1+i)\ep_{-h'}^{-1}\pfrac{16c}{-h'}e\(\frac h{48c}+\frac{\hbar'}{144c}\)
 e\pfrac{h(1+(-1)^c)}8e\pfrac{\hbar'(c+1)}{36}.
\end{multline*}
Comparing these expressions gives \eqref{claim}.  When $c\equiv 2\pmod 3$ the situation is similar and 
we omit the details.
\end{proof}

We turn to the proof of Theorem~\ref{thm:alg}.
For $D>0$ define
\[
	\sQ_{-D, 12} := \left\{ ax^2+bxy+cy^2: b^2-4ac=-D, \ 12\mid a, \ a>0 \right\}.
\] 
Each $Q=[12a,b,c]\in \sQ_{-D, 12}$ has a unique root $\tau_Q\in \H$ given by 
\[\tau_Q=\frac{-b+\sqrt{-D}}{24a}.\]
Matrices $g=\pmatrix \alpha\beta\gamma\delta\in\Gamma_0(12)$ act on these forms by 
\begin{equation*} 
	g Q(x,y) := Q(\delta x-\beta y,-\gamma x+\alpha y).
\end{equation*}
This action preserves $b\pmod {12}$, and 
for   $g\in \Gamma_0(12)$ we have
\begin{equation} \label{eq:root-compatible}
	g\,\tau_Q = \tau_{g Q}.
\end{equation}
For $Q=[12a,b,c]\in \sQ_{-D, 12}$ define 
\[\chi_{-12}(Q)=\pfrac{-12}{b}.\]

Let $\Gamma_\infty\subseteq\Gamma_0(12)$ be the subgroup of translations.
Since $\pmatrix 1t01 [12a, b, \bullet]=[12a, b-24ta, \bullet]$, 
there is a bijection
\begin{equation} \label{eq:sQ-infty-bijection}
	\Gamma_\infty\backslash \sQ_{-D, 12} \longleftrightarrow \left\{(12a,b) : a>0, \  \ \  0\leq b<24a, \ \ b^2\equiv-D\pmod{48a}\right\}.
\end{equation}
Then we have

\begin{proposition}\label{prop:algtoan}  Suppose that $\gamma>0$ and that $n$ is a positive integer.
Then   
\[
\frac{2\pi}{ D_n^{\frac14}}e\(-\mfrac18\)\!\!\! \!\!\!  \sum_{\substack{c\leq\frac{ 2\sqrt{D_n}}{ \gamma}\\c\equiv 0\pmod 2}}
\!\!\! \frac{S(0, n, c, \psi)  }{ c}
I_{\frac12}\pmfrac{\pi \sqrt{D_n}}{6c}
=
\frac{i}{\sqrt{D_n}}\sum_{\substack{Q\in \Gamma_\infty\backslash \sQ_{-D_n, 12}  \\ \im \tau_Q\geq\frac{\gamma}{24} }}
\!\!\! \chi_{-12}(Q)\(e(\tau_Q)-e(\overline\tau_Q)\).
\]

\end{proposition}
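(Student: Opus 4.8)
The plan is to expand both sides through the explicit formulas available and to reduce each of them to the same finite sum over the quantities $F_{2a}(n)$ of \eqref{Fcdef}. Throughout I write $D_n:=24n-1$, so that $\sQ_{-D_n,12}=\sQ_{1-24n,12}$ and the congruence $x^2\equiv 1-24n$ reads $x^2\equiv -D_n$. First I would rewrite the right-hand side using the bijection \eqref{eq:sQ-infty-bijection}, which parametrizes $\Gamma_\infty\backslash\sQ_{-D_n,12}$ by pairs $(12a,b)$ with $a>0$, $0\le b<24a$ and $b^2\equiv -D_n\pmod{48a}$. For such a form one has $\tau_Q=\frac{-b+i\sqrt{D_n}}{24a}$, so $\im\tau_Q=\frac{\sqrt{D_n}}{24a}$ and the cutoff $\im\tau_Q\ge \gamma/24$ becomes $a\le \sqrt{D_n}/\gamma$, which matches the range $c=2a\le 2\sqrt{D_n}/\gamma$ on the left. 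A direct computation gives $e(\tau_Q)-e(\bar\tau_Q)=-2\,e\(-\frac{b}{24a}\)\sinh\(\frac{\pi\sqrt{D_n}}{12a}\)$, and $\chi_{-12}(Q)=\pfrac{-12}{b}$, so the right-hand side equals
\[
\frac{-2i}{\sqrt{D_n}}\sum_{a\le \sqrt{D_n}/\gamma}\sinh\(\frac{\pi\sqrt{D_n}}{12a}\)\sum_{\substack{0\le b<24a\\ b^2\equiv -D_n\pmod{48a}}}\pfrac{-12}{b}\,e\(-\frac{b}{24a}\).
\]

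Next I would identify the inner sum with $F_{2a}(n)$. The summand $\pfrac{-12}{x}\,e\(-\frac{x}{24a}\)$ is invariant under $x\mapsto x+24a$, since $\pfrac{-12}{\bullet}$ has period $12$, which divides $24a$, and $e(-1)=1$; moreover this shift preserves the congruence $x^2\equiv -D_n\pmod{48a}$. Hence the full sum over $x\pmod{48a}$ is exactly twice the inner $b$-sum. Replacing $x\mapsto -x$ and using $\pfrac{-12}{-1}=-1$ shows that $\sum_{x\pmod{48a}}\pfrac{-12}{x}\,e\(-\frac{x}{24a}\)=-F_{2a}(n)$; the same substitution (no solution is fixed by $x\mapsto -x$ because $-D_n$ is odd) gives $\overline{F_{2a}(n)}=-F_{2a}(n)$, i.e.\ $F_{2a}(n)$ is purely imaginary. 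Therefore the inner $b$-sum is $-\tfrac12 F_{2a}(n)$, and the right-hand side collapses to $\frac{i}{\sqrt{D_n}}\sum_{a\le \sqrt{D_n}/\gamma}\sinh\(\frac{\pi\sqrt{D_n}}{12a}\)F_{2a}(n)$.

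Finally I would expand the left-hand side. Using $I_{\frac12}(z)=\sqrt{2/(\pi z)}\,\sinh z$ and setting $c=2a$ yields $I_{\frac12}\pfrac{\pi\sqrt{D_n}}{12a}=\frac{\sqrt{24a}}{\pi D_n^{1/4}}\sinh\(\frac{\pi\sqrt{D_n}}{12a}\)$, so the left-hand side becomes $\frac{e(-1/8)}{\sqrt{D_n}}\sum_{a\le \sqrt{D_n}/\gamma}\sqrt{24/a}\,\sinh\(\frac{\pi\sqrt{D_n}}{12a}\)\,S(0,n,2a,\psi)$. By Proposition~\ref{lem:fkmk} and $\overline{F_{2a}(n)}=-F_{2a}(n)$ we have $S(0,n,2a,\psi)=\sqrt{a/24}\,e(-1/8)\overline{F_{2a}(n)}=-\sqrt{a/24}\,e(-1/8)F_{2a}(n)$; substituting this and using $\sqrt{24/a}\cdot\sqrt{a/24}=1$ together with $e(-1/8)^2=-i$ turns the left-hand side into $\frac{i}{\sqrt{D_n}}\sum_{a}\sinh\(\frac{\pi\sqrt{D_n}}{12a}\)F_{2a}(n)$, which is precisely the expression just obtained for the right-hand side.

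The routine parts are the manipulations of $I_{\frac12}$ and of $e(\tau_Q)-e(\bar\tau_Q)$; the delicate point will be the bookkeeping of the eighth roots of unity. The complex conjugate built into Proposition~\ref{lem:fkmk} must be removed through the purely-imaginary identity $\overline{F_{2a}(n)}=-F_{2a}(n)$, and it is exactly this sign, together with $e(-1/8)^2=-i$, that supplies the factor $i$ reconciling the two sides. The second point requiring care is the reduction from the residue sum modulo $48a$ to the half-range sum over $0\le b<24a$, which rests on the $x\mapsto x+24a$ invariance of the summand and hence on the divisibility $12\mid 24a$.
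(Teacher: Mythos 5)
Your proof is correct and follows essentially the same route as the paper's: both rest on Proposition~\ref{lem:fkmk}, the identity $I_{\frac12}(x)=\sqrt{2/\pi}\,\sinh(x)/\sqrt{x}$, and the folding of the solution sum modulo $48a$ onto the half-range $0\le b<24a$ via $x\mapsto x+24a$, after which the two sides match term by term over the bijection \eqref{eq:sQ-infty-bijection}. The only (cosmetic) difference is that you eliminate the conjugate from Proposition~\ref{lem:fkmk} by observing $\overline{F_{2a}(n)}=-F_{2a}(n)$ via $x\mapsto -x$, whereas the paper simply carries $\overline{F_{2a}(n)}=2\sum_b\pfrac{-12}{b}e\(-\frac{b}{24a}\)$ through and matches it directly against $e(\tau_Q)-e(\overline{\tau_Q})$.
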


\begin{proof}[Proof of Proposition~\ref{prop:algtoan}]
Let $A(n, \gamma)$ denote the sum on the left side of the proposition.
Using Proposition~\ref{lem:fkmk} and the identity
\[I_\frac12(x)=\sqrt{\frac{2}{\pi }} \,\frac{\sh x}{\sqrt{x}},\]
we find that 
\[A(n, \gamma)=\frac 1{2i\sqrt{D_n}}\sum_{a\leq\frac{\sqrt{D_n}}{\gamma}}
\bar{F_{2a}(n)}\(e^\frac{\pi\sqrt{D_n}}{12a}-e^\frac{-\pi\sqrt{D_n}}{12a}\).
\]
Pairing the terms $b$ and $b+24a$  in \eqref{Fcdef} gives
\[
F_{2a}(n)= 2\!\!\!\!\sum_{\substack{b \pmod{24a} \\ b^2 \equiv -D_n \pmod{48a}}} \(\frac{-12}{b}  \) e \( \frac{b}{24a} \),
\]
Therefore
\[A(n, \gamma)=\frac{i}{\sqrt{D_n}}\sum_{\substack{Q\in \Gamma_\infty\backslash \sQ_{-D_n, 12}  \\ \im \tau_Q\geq\frac\gamma{24} }}
\chi_{-12}(Q)\(e(\tau_Q)-e(\overline\tau_Q)\).\]
\end{proof}

\begin{proof}[Proof of Theorem~\ref{thm:alg}]
By \eqref{eq:alphafinal} we have
\[\alpha(n)=A(n, 24\gamma)+R\(n,\, \mfrac{\sqrt{D_n}}{12\gamma}\).\]
Theorem~\ref{thm:alg}  follows from   Proposition~\ref{prop:algtoan}  and Theorem~\ref{thm:powersave}.
\end{proof}

\section{Proof of Theorem~\ref{thm:abs_conv_not}}
Recall the definition \eqref{Fcdef} and fix $n>0$. We will consider the quantities $F_{2p}(n)$ where $p\geq 5$ is a prime
with $\pfrac{1-24n}p=1$.
For such a $p$  let $m_p$ satisfy
\begin{equation}\label{eq:mpdef}
48^2 m_p^2\equiv 1-24n\pmod p.
\end{equation}
 In the sum defining $F_{2p}(n)$ we may take $x=pj\pm 48m_p$,
 where $p^2j^2\equiv 1-24n\pmod{48}$.
 For simplicity, define $\ep_{n,p}\in\{0, 1\}$ by 
 \[1+24\ep_{n,p}\equiv p^2(1-24n)\pmod{48}.\]
Then we have
 \[\begin{aligned}
 F_{2p}(n)&=\sum_{\substack{j^2\equiv 1+24\ep_{n,p}\pmod{48}\\x=pj\pm 48m_p}}
 \pfrac{-12}{x}e\pfrac{x}{24c}\\
 &=2\pfrac{-12}p\cos\pfrac{4\pi m_p}{p}\sum_{j^2\equiv 1+24\ep_{n,p}\pmod{48}}
 \pfrac{-12}{j}e\pfrac{j}{24}.
\end{aligned} \]
Evaluating the last sum, we find that 
\[  F_{2p}(n)=2\sqrt{24}\, i(-1)^n\pfrac{-12}pe\pfrac{p^2-1}{48}\cos\pfrac{4\pi m_p}p.\]

Let $S$ be the set of primes $p\geq 5$ for which there exists an $m_p$ satisfying 
\eqref{eq:mpdef} with $0<\frac{m_p}{p}\leq\frac1{16}$.  For $p\in S$, we  have $F_{2p}(n)\gg 1$.

By a theorem of Duke, Friedlander and Iwaniec \cite{duke-fried-iwan-cong},
we have $\#\{p\in S\ :  p\leq X\}\gg \pi(X)$.
By Proposition~\ref{lem:fkmk} and the fact that $I_\frac12(x) \gg x^\frac12$ as $x\to 0$, 
we find that 
\[\sum_{\substack{ c\leq 2X\\c\equiv 0\pmod 2}}
c^{-1}\left| S(0, n, c, \psi) \right|
I_{\frac12}\pmfrac{\pi \sqrt{24n-1}}{6c}\gg \sum_{\substack{p\leq X\\p\in S}}\frac1p.
\]
Theorem~\ref{thm:abs_conv_not} follows.

\bibliographystyle{amsalpha}
\bibliography{fq}

\end{document}